\let\my@xfloat\@xfloat
\def\@xfloat#1[#2]{
    \my@xfloat#1[#2]%
    \def\baselinestretch{1}%
    \@normalsize \normalsize
}
\shorttitle{{\it Two-sample testing based on the overlap coefficient}}
\title{A novel statistical approach for two-sample testing based on the overlap coefficient}
\author{Atsushi {\sc Komaba}, Hisashi {\sc Johno} and Kazunori {\sc Nakamoto}}
\renewcommand{\@seccntformat}[1]{%
  \ifcsname format#1\endcsname
    \csname format#1\endcsname
  \else
    \csname the#1\endcsname
  \fi
  \quad
}
\newtheorem{theorem}{Theorem}[section]
\theoremstyle{definition}
\newtheorem{definition}[theorem]{Definition}
\newtheorem{proposition}[theorem]{Proposition}
\newtheorem{lemma}[theorem]{Lemma}
\newtheorem{corollary}[theorem]{Corollary}
\newtheorem*{notation}{Notation}
\theoremstyle{remark}
\newtheorem{remark}[theorem]{Remark}
\Crefname{equation}{}{}
\newcommand\A{\mathfrak{A}}
\newcommand\Z{\mathbb{Z}}
\renewcommand\d{\mathrm{d}}
\newcommand\N{\mathbb{N}}
\newcommand\R{\mathbb{R}}
\newcommand\1{\mathbbm{1}}
\newcommand\C{\mathcal{C}}
\newcommand\V{\mathcal{V}}
\newcommand\T{\mathcal{T}}
\newcommand\J{\mathcal{J}}
\renewcommand\S{\mathcal{S}}
\newcommand\wh\widehat
\newcommand\wt\widetilde
\newcommand\bs\boldsymbol
\newcommand\Seq{\text{\sc Seq}}
\newcommand\infint{\int^\infty_{-\infty}}
\newcommand\normal{\mathrm{Normal}}
\newcommand\trapezoidal{\mathrm{Trapezoidal}}
\newcommand\triangular{\mathrm{Triangular}}
\newcommand\mixed{\mathrm{Mixed}}
\DeclareMathOperator*{\argmax}{arg\,max}
\DeclareMathOperator*{\argmin}{arg\,min}
\newcommand\figscale{.8}
\begin{document}
\label{firstpage}
\maketitle
\begin{abstract}
{\bf Abstract.}\m 
  Here we propose a new nonparametric framework for two-sample testing,
  named as the OVL-$q$ ($q = 1, 2, \ldots$).
  This can be regarded as a natural extension of the Smirnov test,
  which is equivalent to the OVL-1.
  We specifically focus on the OVL-2,
  implement its fast algorithm,
  and show its superiority over other statistical tests in some experiments.

\footnotetext{2020 {\it Mathematics Subject
Classification\/}.\m Primary 62G10; Secondary 62-04.}
\footnotetext{Key words:\n nonparametric statistics, overlap coefficient, two-sample testing.}
\end{abstract}
%
\label{finalpage}
\section{Introduction}
\label{sec:intro}
The overlap coefficient (OVL) is a measure of the similarity between two probability distributions,
defined as the common area under their density functions.
Previously, we have developed a nonparametric method to estimate the OVL \cite{johno21}.

In any two-sample test for equality of (continuously differentiable) distribution functions, 
the null hypothesis is equivalent to the OVL being one.
To date, however, the OVL has not been the main subject of such hypothesis testing.

The objective of this study is to construct a new nonparametric two-sample test for distribution equality based on the OVL estimation, which will be referred to as the OVL-$q$ ($q = 1, 2, \ldots$).
Furthermore, we aim to implement algorithms for the OVL-$q$
and experimentally compare the statistical power of the OVL-1 (which is equivalent to the Smirnov test) and OVL-2, for example,
with that of other statistical tests.

In this paper, we start with preliminaries and basic results in \Cref{sec:framework}.
The algorithms for the OVL-$q$ are described in \Cref{sec:algorithm}.
Experimental results are shown in \Cref{sec:exper}, and the conclusion follows in \Cref{sec:concl}.
The proofs of \Cref{thm:rhoh,thm:rho_hat.fibonacci} are given in \Cref{sec:proof.rhoh,sec:proof.rho_hat.fibonacci}, respectively.

A system to perform the OVL-1 and OVL-2 is available at \url{https://fiveseven-lambda.github.io/ovl-test/} along with its source code.

\begin{notation}
Throughout this paper,
we denote by $\Z$, $\N$, $\N_+$, $\mathbb{Q}$, and $\R$
the sets of integers, nonnegative integers, positive integers, rational numbers, and real numbers, respectively.
If $-\infty \le a \le b \le \infty$ and if there is no confusion,
we write $[a, b] = \{x : a \le x \le b\}$,
$[a, b) = \{x : a \le x < b\}$,
$(a, b] = \{x : a < x \le b\}$,
and $(a, b) = \{x : a < x < b\}$ as (extended) real intervals.
For $q \in \N_+$, we define $\R_\le^q = \{(v_1, \ldots, v_q) \in \R^q : v_1 \le \cdots \le v_q\}$.
For a set $A$, $\# A$ denotes the cardinality of $A$.
\end{notation}

\section{Analytical framework}
\label{sec:framework}
\subsection{Estimation of the OVL}
\begin{definition}
\label{def:F}
	On a probability space $(\Omega, \A, P)$,
  let $X_1, \ldots, X_m$ be real random variables
  with a continuous distribution function $F_0$,
  $Y_1, \ldots, Y_n$ be those with $F_1$,
  and $X_1, \ldots, X_m, Y_1, \ldots, Y_n$ be mutually independent.
	The empirical distribution functions
  corresponding to $\{X_1,\ldots,X_m\}$ and $\{Y_1,\ldots,Y_n\}$ are given by
	\begin{equation}
  \label{eq:def.F}
    \begin{split}
      F_{0,m}(x) &= \frac{1}{m}\sum_{i=1}^m\1_{(-\infty,x]}(X_i)\qquad (x\in\R),\\
      F_{1,n}(x) &= \frac{1}{n}\sum_{i=1}^n\1_{(-\infty,x]}(Y_i)\qquad (x\in\R),
    \end{split}
	\end{equation}
	respectively, where $\1$ denotes the indicator function.
	Put $F_0(\infty) = F_1(\infty) = F_{0, m}(\infty) = F_{1, n}(\infty) = 1$
	and $F_0(-\infty) = F_1(-\infty) = F_{0, m}(-\infty) = F_{1, n}(-\infty) \allowbreak = 0$.
\end{definition}
\begin{definition}
\label{def:rho_qmn}
	For a real function $g$ on a set $A$ and $x, y \in A$,
	we write $g|_x^y = g(y) - g(x)$.
	For $\bs{v}=(v_1,\ldots,v_q)\in\R_\le^q$, define
	\begin{align}
		r(\bs{v})
			&= \sum_{i = 0}^q
			\min\left\{ F_0|_{v_i}^{v_{i + 1}}, F_1|_{v_i}^{v_{i + 1}}\right\},
			\label{eq:def.r} \\
		r_{m, n}(\bs{v})
			&= \sum_{i = 0}^q
			\min\left\{ F_{0, m}|_{v_i}^{v_{i + 1}}, F_{1, n}|_{v_i}^{v_{i + 1}}\right\}
			\label{eq:def.r_mn},
	\end{align}
	where $v_0=-\infty$ and $v_{q+1}=\infty$.
	Note that $0\le r(\bs{v})\le 1$ and $0\le r_{m,n}(\bs{v})\le 1$ for all $\bs{v}\in\R_\le^q$.
	We also define 
	\begin{equation}
	\label{eq:def.rho_qmn}
		\rho_{q,m,n}=\min_{\bs{v}\in\R_\le^q}\,r_{m,n}(\bs{v})\in [0, 1],
	\end{equation}
	which exists because $r_{m, n}$ takes at most finitely many values.
\end{definition}
\begin{remark}
\label{rem:rho_measurable}
	Note that $\rho_{q,m,n}$ is measurable on $\Omega$,
	because
	$r_{m,n}(\bs v)$ is obviously measurable for each $\bs v \in \R_\le^q$ and
	$\R_\le^q$ in \Cref{eq:def.rho_qmn} can be replaced by its countable subset $\R_\le^q\cap\mathbb{Q}^q$
	(since $F_{0, m}$ and $F_{1, n}$ are right continuous).
\end{remark}

\begin{definition}
\label{def:ascmp}
	Suppose $\xi$ is a random variable on $(\Omega,\A,P)$
	taking values in a separable metric space $(E, d)$;
	$\{\xi_{i}:i\in\N_+\}$ and $\{\xi_{i,j}':i,j\in\N_+\}$ are two sequences of random variables on $(\Omega,\A,P)$ into $E$.
	Then we say that $\{\xi_i\}$ and $\{\xi_{i,j}'\}$ \textit{converge almost surely} to $\xi$ if
	\begin{align*}
		&P\left(\left\{\omega\in\Omega: \lim_{i\to\infty}\xi_i(\omega)=\xi(\omega)\right\}\right) = 1,\\
		&P\left(\left\{\omega\in\Omega: \lim_{i,j\to\infty}\xi_{i,j}'(\omega)=\xi(\omega)\right\}\right) = 1,
	\end{align*}
	respectively.
\end{definition}

\begin{remark}
\label{rem:f0f1}
	If $F_0$ and $F_1$ are differentiable on $\R$ with continuous derivatives $f_0$ and $f_1$, respectively,
	then the OVL between the two distributions is given by
	\begin{equation}
	\label{eq:def.rho}
		\rho = \infint\min\,\{f_0(x),f_1(x)\}\:\d x.
	\end{equation}
	We call $x\in\R$ a \textit{coincidence point} between $f_0$ and $f_1$ if $f_0(x)=f_1(x)$;
	$x\in\R$ a \textit{crossover point} between $f_0$ and $f_1$
	if there exists a neighborhood $V$ of $x$
	such that for any $a,b\in V$,
	$(a-x)(b-x)>0$ if and only if $[f_0(a)-f_1(a)][f_0(b)-f_1(b)]>0$.
	The set of crossover points and that of coincidence points are
	denoted by $C(f_0, f_1)$ and $C'(f_0, f_1)$, respectively.
	Note that $C(f_0, f_1)\subset C'(f_0, f_1)$.
\end{remark}

\begin{theorem}
\label{thm:rhoh}
	Suppose $f_0$ and $f_1$ are as in \Cref{rem:f0f1}, 
	$\#C'(f_0,f_1)<\infty$, and $\#C(f_0,f_1)=N<\infty$.
	Then $\rho_{N,m,n}$ converges almost surely to $\rho$ as $m,n\to\infty$.
\end{theorem}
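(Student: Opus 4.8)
The plan is to split the argument into a deterministic, population-level identity and a stochastic, uniform Glivenko--Cantelli estimate, and then to glue them together using the elementary stability of an infimum under uniform perturbations.

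First I would prove the population identity
\[
\min_{\bs v\in\R_\le^N} r(\bs v) = \rho .
\]
The lower bound $r(\bs v)\ge\rho$ holds for \emph{every} $\bs v$: on each subinterval $(v_i,v_{i+1})$ one has $\min\{F_0|_{v_i}^{v_{i+1}},F_1|_{v_i}^{v_{i+1}}\}=\min\{\int_{v_i}^{v_{i+1}}f_0,\int_{v_i}^{v_{i+1}}f_1\}\ge\int_{v_i}^{v_{i+1}}\min\{f_0,f_1\}$, and summing over $i=0,\dots,N$ gives $r(\bs v)\ge\infint\min\{f_0,f_1\}\,\d x=\rho$. For the reverse inequality I would exhibit an optimal $\bs v$ by placing its $N$ coordinates exactly at the $N$ points of $C(f_0,f_1)$. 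Here the finiteness of $C'(f_0,f_1)$ is decisive: since $f_0-f_1$ is continuous with only finitely many zeros, it keeps a fixed sign on each open interval between consecutive coincidence points, and by the definition of a crossover point the sign flips precisely at the points of $C(f_0,f_1)$. Consequently, between two consecutive crossover points $f_0-f_1$ has a constant sign (it may touch $0$ at the intervening non-crossover coincidence points, but does not change sign there), so on each of the $N+1$ resulting intervals one density dominates the other throughout. That turns each $\min\{\int f_0,\int f_1\}$ into $\int\min\{f_0,f_1\}$, and summation yields $r(\bs v)=\rho$.

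Next I would control the empirical functional uniformly in $\bs v$. Writing $e_{0,m}=F_{0,m}-F_0$ and $e_{1,n}=F_{1,n}-F_1$, the inequality $|\min\{a,b\}-\min\{c,d\}|\le|a-c|+|b-d|$ applied termwise gives, after bounding each increment by its two endpoint values (each interior node $v_j$ occurs in two terms, while $v_0=-\infty$ and $v_{N+1}=\infty$ contribute nothing because $e_{0,m}(\pm\infty)=e_{1,n}(\pm\infty)=0$ by the conventions of \cref{def:rho_qapp}),
\[
\sup_{\bs v\in\R_\le^N}\bigl|r_{m,n}(\bs v)-r(\bs v)\bigr|\le 2N\Bigl(\sup_{x\in\R}|e_{0,m}(x)|+\sup_{x\in\R}|e_{1,n}(x)|\Bigr).
\]
By the Glivenko--Cantelli theorem the two suprema on the right tend to $0$ almost surely as $m\to\infty$ and $n\to\infty$, respectively.

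Finally I would combine the two ingredients. Since taking an infimum over a common domain is $1$-Lipschitz for the uniform distance, $\bigl|\rho_{N,m,n}-\rho\bigr|=\bigl|\min_{\bs v}r_{m,n}(\bs v)-\min_{\bs v}r(\bs v)\bigr|\le\sup_{\bs v}|r_{m,n}(\bs v)-r(\bs v)|$, which by the displayed bound is at most $2N(\sup_x|e_{0,m}|+\sup_x|e_{1,n}|)$. On the almost-sure event where both Glivenko--Cantelli limits vanish, the right-hand side tends to $0$, which gives the almost-sure convergence $\rho_{N,m,n}\to\rho$ in the double-index sense of \cref{def:ascmp}. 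The main obstacle is the population identity: specifically, the careful use of $\#C'(f_0,f_1)<\infty$ to guarantee that the $N$ crossover points really do cut $\R$ into finitely many intervals on each of which $f_0-f_1$ has constant sign. Once that structural fact is secured, the probabilistic half is a routine Glivenko--Cantelli estimate.
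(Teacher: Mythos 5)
Your proof is correct, and it takes a genuinely shorter route than the paper's. The two arguments share the same pillars: the population identity $\min_{\bs v\in\R_\le^N} r(\bs v)=r(\bs c)=\rho$ (the paper's \cref{rem:rc_rho,thm:rho_app}, resting exactly as you say on $\#C'(f_0,f_1)<\infty$ to guarantee that $f_0-f_1$ keeps a constant sign between consecutive crossover points), and the uniform bound $\sup_{\bs v}|r_{m,n}(\bs v)-r(\bs v)|\le C\bigl(\sup_x|F_{0,m}-F_0|+\sup_x|F_{1,n}-F_1|\bigr)$ combined with Glivenko--Cantelli (the paper's \cref{thm:suphconv}, with $C=2(N+1)$ where your endpoint bookkeeping gives the slightly sharper $2N$). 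Where you diverge is the gluing step: you invoke the elementary inequality $|\inf_{\bs v} r_{m,n}(\bs v)-\inf_{\bs v} r(\bs v)|\le\sup_{\bs v}|r_{m,n}(\bs v)-r(\bs v)|$ and are done in one line. The paper instead fixes a minimizer $\bs v_{m,n}\in\V_{N,m,n}$, proves via a coercivity lemma (\cref{Kexist}) and an argmin-stability lemma (\cref{supnotK}) that $\bs v_{m,n}\to\bs c$ almost surely (\cref{v_conv_app}), and only then estimates $|r_{m,n}(\bs v_{m,n})-r(\bs c)|$ using the continuity of $F_0,F_1$ at the $c_i$; that route needs the uniqueness of the population minimizer from \cref{thm:rho_app} and a separate discussion of the measurability of $D_N(\V_{N,m,n},\V_N)$, none of which your argument requires. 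What the longer route buys is a strictly stronger by-product of independent interest, namely that the empirical optimal cut points converge to the crossover points; for the statement of \cref{thm:rhoh} itself, your Lipschitz-of-the-infimum shortcut suffices and sidesteps the measurability subtleties entirely.
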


See \Cref{sec:proof.rhoh} for the proof of \Cref{thm:rhoh}.

Hereafter,
$F_0$ and $F_1$ are only assumed to be continuous,
unless otherwise noted.
	
\subsection{The OVL-$q$ test}
\label{sec:OVLtest}
	For $q\in\N_+$, we define
	the OVL-$q$ test statistic as $\rho_{q,m,n}$.
	Under the null hypothesis $H_0: F_0 = F_1$,
	the p-value of $\rho_{q,m,n}$ is given by $p_{q,m,n}(\rho_{q,m,n})$ where 
	\begin{equation}
	\label{eq:def.p_qmn}
		p_{q,m,n}(x) = P(\{\omega \in \Omega : \rho_{q,m,n}(\omega) \le x\})\qquad (x\in\R),
	\end{equation}
	and the lower limit of a $100(1-\alpha)$\% confidence interval ($0<\alpha<1$) of $\rho_{q,m,n}$ is
	\begin{equation}
	\label{eq:def.l_qmn}
		l_{q,m,n}(\alpha) = \sup\,\{x\in\R:p_{q,m,n}(x)<\alpha\}.
	\end{equation}

\subsection{The Smirnov test}
\label{sec:Smirnov_test}
(See \cite{berger14} for reference.)
The Smirnov (or the two-sample Kolmogorov-Smirnov) test statistic is defined as
\[
	D_{m,n} = \max_{x\in\R}\left|F_{0,m}(x)-F_{1,n}(x)\right|.
\]

\begin{proposition}
\label{prop:rho_D}
	(See \cite[Section 3.2]{Hand} for reference.)
	The relation $\rho_{1,m,n} = 1-D_{m,n}$ holds.
\end{proposition}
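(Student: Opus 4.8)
The plan is to reduce the statement to a pointwise algebraic identity and then compare the two optimizations directly. First I would specialize the definition of $r_{m,n}$ in \cref{eq:r_mn} to the case $q=1$, where $\bs v = (v_1)$ is a single real number that I will call $v$, with the boundary conventions $v_0 = -\infty$ and $v_2 = \infty$. Using the stipulated endpoint values $F_{0,m}(-\infty) = F_{1,n}(-\infty) = 0$ and $F_{0,m}(\infty) = F_{1,n}(\infty) = 1$, the increments $F_{0,m}|_{v_i}^{v_{i+1}}$ and $F_{1,n}|_{v_i}^{v_{i+1}}$ collapse to $F_{0,m}(v), F_{1,n}(v)$ for the left piece and $1 - F_{0,m}(v), 1 - F_{1,n}(v)$ for the right piece, so that
\[
r_{m,n}(v) = \min\{F_{0,m}(v), F_{1,n}(v)\} + \min\{1 - F_{0,m}(v), 1 - F_{1,n}(v)\}.
\]

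Next I would abbreviate $a = F_{0,m}(v)$ and $b = F_{1,n}(v)$ and establish the elementary identity $\min\{a,b\} + \min\{1-a,1-b\} = 1 - |a-b|$. This follows from $\min\{a,b\} = \frac{1}{2}(a+b-|a-b|)$ together with $\min\{1-a,1-b\} = 1 - \max\{a,b\} = 1 - \frac{1}{2}(a+b+|a-b|)$; adding the two expressions cancels $a+b$ and leaves $1 - |a-b|$. Substituting back gives the pointwise relation
\[
r_{m,n}(v) = 1 - \left|F_{0,m}(v) - F_{1,n}(v)\right| \qquad (v\in\R).
\]

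Finally I would take the minimum over $v\in\R$ on both sides. Since the constant pulls out and minimizing a negated quantity is the same as negating its maximum, this yields
\[
\rho_{1,m,n} = \min_{v\in\R} r_{m,n}(v) = 1 - \max_{v\in\R}\left|F_{0,m}(v) - F_{1,n}(v)\right| = 1 - D_{m,n},
\]
as claimed. I do not expect any genuine obstacle here: the argument is a short computation. The only points worth stating with care are that both extrema are legitimately attained and that the two optimizations range over the same object. Both are immediate, since $r_{m,n}$ takes only finitely many values (as noted after \cref{eq:rho_qmn}), so the defining minimum of $\rho_{1,m,n}$ exists, and $\left|F_{0,m} - F_{1,n}\right|$ is a right-continuous step function with finitely many values, so the maximum defining $D_{m,n}$ is attained. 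The only mild bookkeeping is keeping the boundary terms at $\pm\infty$ consistent between the two definitions, and the stipulated endpoint values make this automatic.
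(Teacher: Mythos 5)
Your proposal is correct and follows essentially the same route as the paper: both reduce to the pointwise identity $r_{m,n}(v) = 1 - |F_{0,m}(v)-F_{1,n}(v)|$ (the paper via $\min\{a,b\}+1-\max\{a,b\}$, you via the half-sum formulas, which is the same computation) and then pass the minimum over $v$ through. No gaps.
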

\begin{proof}
	We have
	\begin{align*}
		\rho_{1, m, n} &= \min_{v \in \R}\, r_{m, n}(v) \\
		&= \min_{v \in \R} \left(
			\min\left\{F_{0, m}|_{-\infty}^v, F_{1, n}|_{-\infty}^v\right\}
			+ \min\left\{F_{0, m}|_v^\infty, F_{1, n}|_v^\infty\right\}
		\right) \\
		&= \min_{v \in \R} \left(
			\min\left\{F_{0, m}(v), F_{1, n}(v)\right\}
			+ \min\left\{1 - F_{0, m}(v), 1 - F_{1, n}(v)\right\}
		\right) \\
		&= \min_{v \in \R} \left(
			\min\left\{F_{0, m}(v), F_{1, n}(v)\right\}
			+ 1 - \max\left\{F_{0, m}(v), F_{1, n}(v)\right\}
		\right) \\
		&= \min_{v \in \R} (1 - |F_{0, m}(v) - F_{1, n}(v)|) \\
		&= 1 - \max_{v \in \R} |F_{0, m}(v) - F_{1, n}(v)| \\
		&= 1 - D_{m, n}
	\end{align*}
	by definition.
\end{proof}
Let
\begin{equation*}
	\wt{p}_{m,n}(x) = P(\left\{\omega\in\Omega:D_{m,n}(\omega)\ge x\right\})\qquad (x\in\R).
\end{equation*}
The p-value of $D_{m,n}$ under $H_0: F_0 = F_1$ is given by $\wt{p}_{m,n}(D_{m,n})$.
Since $D_{m,n}=1-\rho_{1,m,n}$ by \Cref{prop:rho_D}, we have
\begin{equation*}
	\wt{p}_{m,n}(x) = P(\left\{\omega\in\Omega:\rho_{1,m,n}(\omega)\le 1-x\right\})
	= p_{1,m,n}(1-x)\qquad (x\in\R).
\end{equation*}
Hence $\wt{p}_{m,n}(D_{m,n})$ is equivalent to the p-value of $\rho_{1,m,n}$ under $H_0$
because $$\wt{p}_{m,n}(D_{m,n})=p_{1,m,n}(1-D_{m,n})=p_{1,m,n}(\rho_{1,m,n}).$$ 
Therefore, the OVL-$1$ is equivalent to the Smirnov test.

\section{Algorithms for the OVL-$q$}
\label{sec:algorithm}

\subsection{Basic principles}

\begin{definition}
\label{def:rho_q}
	For $k\in\N_+$,
	let $\Gamma_k = \{0, 1\}^k$
	and define $N_1(\bs\gamma) = \sum_{i = 1}^k \gamma_i$
	and $N_0(\bs\gamma) = k - N_1(\bs\gamma)$
	for $\bs\gamma = (\gamma_1, \ldots, \gamma_k) \in \Gamma_k$.
	Let $\Gamma_0 = \{e\}$ where $e$ is the empty sequence,
	and define $N_0(e) = N_1(e) = 0$.
	Define $\bs\gamma_{i:j} = (\gamma_{i+1},\ldots,\gamma_j)$
	for $\bs\gamma = (\gamma_1, \ldots, \gamma_k) \in \Gamma_k$ ($k \ge 1$)
	and $i, j \in \{0, \ldots, k\}$ ($i < j$),
	and $\bs\gamma_{i:i} = e$
	for $\bs\gamma \in \Gamma_k$ ($k \ge 0$) and $i \in \{0, \ldots, k\}$.
	Let $\Gamma_{k, l} = \{\bs\gamma \in \Gamma_{k + l} : N_0(\bs\gamma) = k, N_1(\bs\gamma) = l\}$
	for $k, l\in\N$.
	For $\bs\gamma \in \Gamma_{m,n}$ and $q\in\N_+$, define
	\begin{equation}
	\label{eq:def.rho_hat}
		\widehat\rho_q(\bs\gamma) = \min_{0\le j_1\le\cdots\le j_q\le m+n} \widehat r_{\bs\gamma}(j_1,\ldots,j_q),
	\end{equation}
	where
	\begin{gather}
		\widehat r_{\bs\gamma}(j_1,\ldots,j_q)
			= \sum_{i = 0}^q \min\left\{
				\widehat F_{0, \bs\gamma} |_{j_i}^{j_{i + 1}},
				\widehat F_{1, \bs\gamma} |_{j_i}^{j_{i + 1}}
			\right\},
			\label{eq:def.r_hat} \\
		\widehat F_{0, \bs\gamma}(i) = \frac{N_0(\bs\gamma_{0:i})}{m},
			\quad
			\widehat F_{1, \bs\gamma}(i) = \frac{N_1(\bs\gamma_{0:i})}{n},
			\label{eq:def.F_hat}
	\end{gather}
	$j_0=0$, and $j_{q+1}=m+n$.
	Note that $0 \le \widehat r_{\bs\gamma}(j_1, \ldots, j_q) \le 1$,
	and hence
	\begin{equation}
	\label{eq:rho_hat.01}
		0 \le \widehat\rho_q(\bs\gamma) \le 1.
	\end{equation}
\end{definition}
Let $\widehat\Omega$ be the set of all $\omega\in\Omega$
such that $X_1(\omega),\ldots,X_m(\omega),Y_1(\omega),\ldots,\allowbreak Y_n(\omega)$ are all distinct.	
Since $F_0$ and $F_1$ are continuous, we can see that 
\begin{equation}
\label{eq:POmega1}
	P\bigl(\widehat\Omega\bigr)=1.
\end{equation}
Hence we can put $\{Z_1,\ldots,Z_{m+n}\}=\{X_1,\ldots,X_m,Y_1,\ldots,Y_n\}$ with $Z_1<\cdots<Z_{m+n}$ almost surely.
We also put $Z_0=Z_1-1$.
Now define $\widehat{\bs\gamma} = (\widehat\gamma_1, \ldots, \widehat\gamma_{m + n}) \in \Gamma_{m, n}$ on $\widehat\Omega$ by
\[
	\widehat\gamma_j = \begin{cases}
		0 &\mbox{if}\quad Z_j \in \{X_1, \ldots, X_m\}, \\
		1 &\mbox{if}\quad Z_j \in \{Y_1, \ldots, Y_n\}.
	\end{cases}
\]
\begin{remark}
\label{rem:F.F_hat}
	By \Cref{eq:def.F,eq:def.F_hat}, we have
	$ \widehat F_{0, \widehat{\bs\gamma}}(i) = F_{0, m}(Z_i) $
	and $\widehat F_{1, \widehat{\bs\gamma}}(i) = F_{1, n}(Z_i) $
	for all $i \in \{0, \ldots, m + n\}$.
\end{remark}
\begin{remark}
\label{rem:H0_prob}
	Under the null hypothesis $H_0: F_0 = F_1$, we have $\widehat{\bs\gamma}(\widehat\Omega)=\Gamma_{m,n}$ and 
	\begin{equation*}
		P(\{\omega \in \widehat\Omega : \widehat{\bs\gamma}(\omega) = \bs\gamma\})
		= (\#\Gamma_{m,n})^{-1}
		= \binom{m + n}{m}^{-1}
	\end{equation*}
	for all $\bs\gamma \in \Gamma_{m, n}$.
\end{remark}
\begin{proposition}
\label{prop:rho_hat.rho_qmn}
	For $q\in\N_+$, $\widehat\rho_q(\widehat{\bs\gamma}) = \rho_{q,m,n} \in [0,1]$.
\end{proposition}
\begin{proof}
	By \Cref{eq:def.rho_qmn,eq:def.rho_hat}, we have
	\begin{align*}
		\widehat\rho_q(\widehat{\bs\gamma})
		&= \min_{0\le j_1\le\cdots\le j_q\le m+n} \widehat r_{\widehat{\bs\gamma}} (j_1, \ldots, j_q) \\
		&= \min_{0\le j_1\le\cdots\le j_q\le m+n} r_{m, n} (Z_{j_1}, \ldots, Z_{j_q}) \\
		&= \min_{(v_1,\ldots,v_q)\in\R_\le^q} r_{m, n} (v_1, \ldots, v_q) \\
		&= \rho_{q,m,n}\in [0,1],
	\end{align*}
	noting that 
	\begin{align*}
		\widehat r_{\widehat{\bs\gamma}}(j_1, \ldots, j_q)
		&= \sum_{i = 0}^q \min\Bigl\{
			\widehat F_{0, \widehat{\bs\gamma}} \big|_{j_i}^{j_{i + 1}},
			\widehat F_{1, \widehat{\bs\gamma}} \big|_{j_i}^{j_{i + 1}}
		\Bigr\} \\
		&= \sum_{i = 0}^q \min\Bigl\{
			F_{0, m} \big|_{Z_{j_i}}^{Z_{j_{i + 1}}},
			F_{1, n} \big|_{Z_{j_i}}^{Z_{j_{i + 1}}}
		\Bigr\} \\
		&= r_{m, n}(Z_{j_1}, \ldots, Z_{j_q})
	\end{align*}
	by \Cref{eq:def.r_mn,eq:def.r_hat,rem:F.F_hat},
	where $j_0=0$ and $j_{q+1}=m+n$.
\end{proof}
\begin{theorem}
\label{thm:p_qmn.prob}
	Under the null hypothesis $H_0: F_0 = F_1$, we have
	\begin{equation*}
		p_{q,m,n}(x) = \frac{\#\left\{\bs\gamma \in \Gamma_{m,n}:\widehat\rho_q(\bs\gamma)\le x\right\}}{\#\Gamma_{m,n}}\qquad (x\in\R)
	\end{equation*}
	for $q\in\N_+$.
\end{theorem}
\begin{proof}
	This is obvious from \Cref{eq:def.p_qmn,eq:POmega1,rem:H0_prob,prop:rho_hat.rho_qmn}.
\end{proof}
	
With this theorem, we can naively perform the OVL-$q$ (see \Cref{sec:OVLtest}).
Let us call this algorithm the \textit{naive OVL-$q$}.	
If $q=2$ and $m=n$, a faster algorithm can be applied, as described in the next subsection.
An optimized algorithm for the OVL-$1$ (equivalent to the Smirnov test; see \Cref{sec:Smirnov_test}) has been previously proposed by \cite{Gunar}.

\subsection{A faster algorithm to calculate $p_{2,n,n}$}
\label{sec:FOVL}

Throughout this subsection, we assume that $m=n$ and $H_0: F_0 = F_1$ hold.
\begin{proposition}
\label{prop:rho_hat.int}
	For any $\bs\gamma \in \Gamma_{n, n}$ and $q\in\N_+$,
	there exists $k\in\{0,\ldots,n\}$ such that
	$\widehat\rho_q(\bs\gamma) = k/n$.
\end{proposition}
\begin{proof}
	It follows from \Cref{eq:def.rho_hat,eq:def.r_hat,eq:rho_hat.01} that
	\[
		\widehat\rho_q(\bs\gamma)
		= \min_{0 \le j_1 \le \cdots \le j_q \le m + n}
		\sum_{i = 0}^q
		\min\left\{
			\widehat F_{0, \bs\gamma}|_{j_i}^{j_{i + 1}},
			\widehat F_{1, \bs\gamma}|_{j_i}^{j_{i + 1}}
		\right\}
		\in [0, 1]
	\]
	where $j_0 = 0$ and $j_{q + 1} = m + n$.
	Noting that
	\[
		\min\left\{
			\widehat F_{0, \bs\gamma}|_{j_i}^{j_{i + 1}},
			\widehat F_{1, \bs\gamma}|_{j_i}^{j_{i + 1}}
		\right\} \in \left\{0,\frac{1}{n},\frac{2}{n},\ldots\right\}
	\]
	by \Cref{eq:def.F_hat},
	we obtain the claim.
\end{proof}

\begin{remark}
\label{rem:rho_hat.int}
	We can see from \Cref{prop:rho_hat.int}
	that the distribution function $p_{q,n,n}$ in \Cref{thm:p_qmn.prob} is
	uniquely determined by the values $p_{q,n,n}(k/n)$ for $k=0,\ldots,n$.
\end{remark}

\begin{definition}
\label{def:fibonacci}
	Define a sequence $\{Q_i(x)\}$ of polynomials in $x$ inductively by
	\begin{align*}
		Q_1(x) &= Q_0(x) = 1, \\
		Q_{i+2}(x) &= Q_{i+1}(x) - x Q_{i}(x) \qquad (i \in\N).
	\end{align*}
	We denote by $Q_i'(x)$ the derivative of $Q_i(x)$.
	Note that $Q_0(x),Q_1(x),\ldots$ can be regarded as formal power series.
	For a formal power series $Q(x)$, we denote by $[x^k]Q(x)$ the coefficient of $x^k$ in $Q(x)$,
	and by $1/Q(x)$ the multiplicative inverse of $Q(x)$ (if it exists).
\end{definition}

We can find $\{Q_i(x)\}$ in \cite{A115139}
as a variation of the Fibonacci polynomials.
For each $i \in \N$, we can easily see that
$[x^0]Q_i(x) = 1$, and hence $1/Q_i(x)$ exists.
			
\begin{theorem}
\label{thm:rho_hat.fibonacci}
	For $k = 0,\ldots,n$, we have
	\begin{equation}
		\label{eq:rho_hat.fibonacci}
		\#\biggl\{ \bs\gamma \in \Gamma_{n, n} : \widehat\rho_2(\bs\gamma) \ge 1 - \frac{k}{n} \biggr\}
		= [x^n]\biggl(\frac{Q_{k + 1}'(x)}{Q_{k}(x)} - \frac{Q_{k + 2}'(x)}{Q_{k + 1}(x)}\biggr).
	\end{equation}
\end{theorem}
See \Cref{sec:proof.rho_hat.fibonacci} for the proof of \Cref{thm:rho_hat.fibonacci}.

\begin{remark}
\label{rem:p2nn_fast}
	For $k = 0, \ldots, n$,
	\Cref{thm:p_qmn.prob,prop:rho_hat.int} imply
	\begin{align*}
		p_{2, n, n}\left(\frac{k}{n}\right)
		&= 1 - \frac{\#\left\{\bs\gamma \in \Gamma_{n,n}:\widehat\rho_2(\bs\gamma)> \frac{k}{n}\right\}}{\#\Gamma_{n,n}}\\
		&= 1 - \frac{\#\left\{\bs\gamma \in \Gamma_{n,n}:\widehat\rho_2(\bs\gamma)\ge \frac{k+1}{n}\right\}}{\#\Gamma_{n,n}},
	\end{align*}
	where
	\begin{equation*}
		\#\left\{\bs\gamma \in \Gamma_{n,n}:\widehat\rho_2(\bs\gamma)\ge \frac{k+1}{n}\right\}
		= [x^n] \left(\frac{Q_{n - k}'(x)}{Q_{n - k - 1}(x)} - \frac{Q_{n - k + 1}'(x)}{Q_{n - k}(x)}\right)
	\end{equation*}
	if $k\le n-1$, by \Cref{thm:rho_hat.fibonacci}.	
	It is obvious that $p_{2,n,n}(n/n)=1$.	
\end{remark}

\Cref{rem:rho_hat.int,rem:p2nn_fast} imply that we can calculate $p_{2,n,n}$ with the use of $\{Q_i(x)\}$.
Let us call this algorithm the \emph{fast OVL-$2$}.
In \Cref{sec:fast-vs-naive}, we will numerically compare the computation times of the naive and fast OVL-$2$.

\section{Numerical experiments}
\label{sec:exper}

\subsection{Computation times of the naive and fast OVL-2}
\label{sec:fast-vs-naive}
We performed the following benchmark test on a personal computer with
min \SI{2200}{\mega\hertz} -- max \SI{5083}{\mega\hertz} CPU
(AMD Ryzen 9 5950X 16-Core Processor),
\SI{62.8}{\gibi\byte} RAM,
and Linux 5.16.14 (Arch Linux).
For each $n \in \{10, 12, 14, 16\}$,
we compared the mean computation times of the naive and fast OVL-2
(averaged over 10 computations for the naive; 100000 computations for the fast)
to calculate $p_{2, n, n}(1 / 2)$.
We further measured the mean computation time of the fast OVL-2
(averaged over 10 computations)
to calculate $p_{2, n, n}(1 / 2)$ with $n \in \{500, 1000, 5000, 10000\}$.
The source code used here was written in Rust (2021 edition, rustc 1.58.1),
and is published at \url{https://github.com/fiveseven-lambda/fast-OVL-benchmark/}.

\begin{table}[t]
	\centering
	\caption{}
	\label{table:benchmark}
	\begin{tabular}{r|rr}
		\hline
		& \multicolumn{2}{c}{\:\:Mean computation time [\si{\ms}]\:\:}\\
		$n$ & naive OVL-2 & fast OVL-2 \\
		\hline
		10 & 9 & 0.026 \\
		12 & 135 & 0.026 \\
		14 & 2153 & 0.028 \\
		16 & 34361 & 0.023 \\
		500 & -- & 12 \\
		1000 & -- & 49 \\
		5000 & -- & 1865 \\
		10000 & -- & 8027 \\
		\hline
	\end{tabular}
\end{table}

\Cref{table:benchmark} shows the result of the benchmark test.
As can be seen, the fast OVL-2 was much faster than the naive OVL-2 (e.g., more than one million times faster to compute $p_{2, 16, 16}(1/2))$.
The calculation of $p_{2, n, n}(1/2)$ with $n \in \{500, 1000, 5000, 10000\}$ was computationally difficult for the naive OVL-2 but easy for the fast OVL-2 (e.g., the fast OVL-2 could compute $p_{2, 10000, 10000}(1 / 2)$ in around eight seconds).

\subsection{The statistical power of the OVL-2 test}
In this experiment, we focused on the case $m=n$ and simulated $X_1,\ldots,\allowbreak X_n,Y_1,\ldots,Y_n$ in \Cref{def:F},
with $f_0$ and $f_1$ in \Cref{rem:f0f1} being specific functions (described in the next paragraph).
The random samples were subjected to the OVL-1, OVL-2, and other statistical tests (i.e., the Welch $t$ \cite{welch47}, two-tailed $F$ \cite[Section 6.12]{snedecor89}, Mann-Whitney $U$ \cite{mann47}, and two-sample Cram\'er-von Mises test \cite{anderson62}) to verify the null hypothesis $H_0: F_0 = F_1$ with 95\% confidence interval.
This trial (from the generation of $2n$ random samples) was repeated $20000$ times independently for each $n\in\{2^2,2^3,\ldots,2^{12}\}$, and the statistical power (or equivalently the rejection ratio) of each test was calculated.
The source code used here was written in Rust (2021 edition, rustc 1.58.1) and Python (version 3.9), and is published at \url{https://github.com/fiveseven-lambda/OVL-q-test-comparison}.

As probability density functions, we used
\begin{equation*}
\normal_{\mu,\sigma}\,(x) = \frac{1}{\sqrt{2\pi}\sigma}\exp\left(-\frac{(x-\mu)^2}{2\sigma^2} \right)\qquad (x\in\R)
\end{equation*}
with $\mu\in\R$ and $\sigma>0$ for normal distributions; 
\begin{equation*}
\trapezoidal\,(x) = \begin{cases}
\ (x+2)/2 & \mbox{if}\quad -2\le x\le -\sqrt{2},\\
\ (2-\sqrt{2})/2 & \mbox{if}\quad -\sqrt{2}< x\le \sqrt{2}, \\
\ (-x+2)/2 & \mbox{if}\quad \sqrt{2}< x\le 2, \\
\ 0 & \mbox{if}\quad x<-2\mbox{ or }2<x
\end{cases}
\end{equation*}
for a trapezoidal distribution;
\begin{equation*}
\triangular\,(x) = \begin{cases}
\ (x+\sqrt{6})/6 & \mbox{if}\quad -\sqrt{6}\le x\le 0,\\
\ (-x+\sqrt{6})/6 & \mbox{if}\quad 0< x\le \sqrt{6}, \\
\ 0 & \mbox{if}\quad x<-\sqrt{6}\mbox{ or }\sqrt{6}<x
\end{cases}
\end{equation*}
for a triangular distribution;
\begin{equation*}
\mixed\,(x) = \frac{1}{2}\left(\normal_{-0.8,0.6} + \normal_{0.8,0.6}\right) (x)\qquad (x\in\R)
\end{equation*}
for a mixed normal distribution.
As a control function, we fixed $f_0=\normal_{0,1}$.
\begin{figure}[htbp]
	\centering
	\includegraphics[width = \figscale\textwidth]{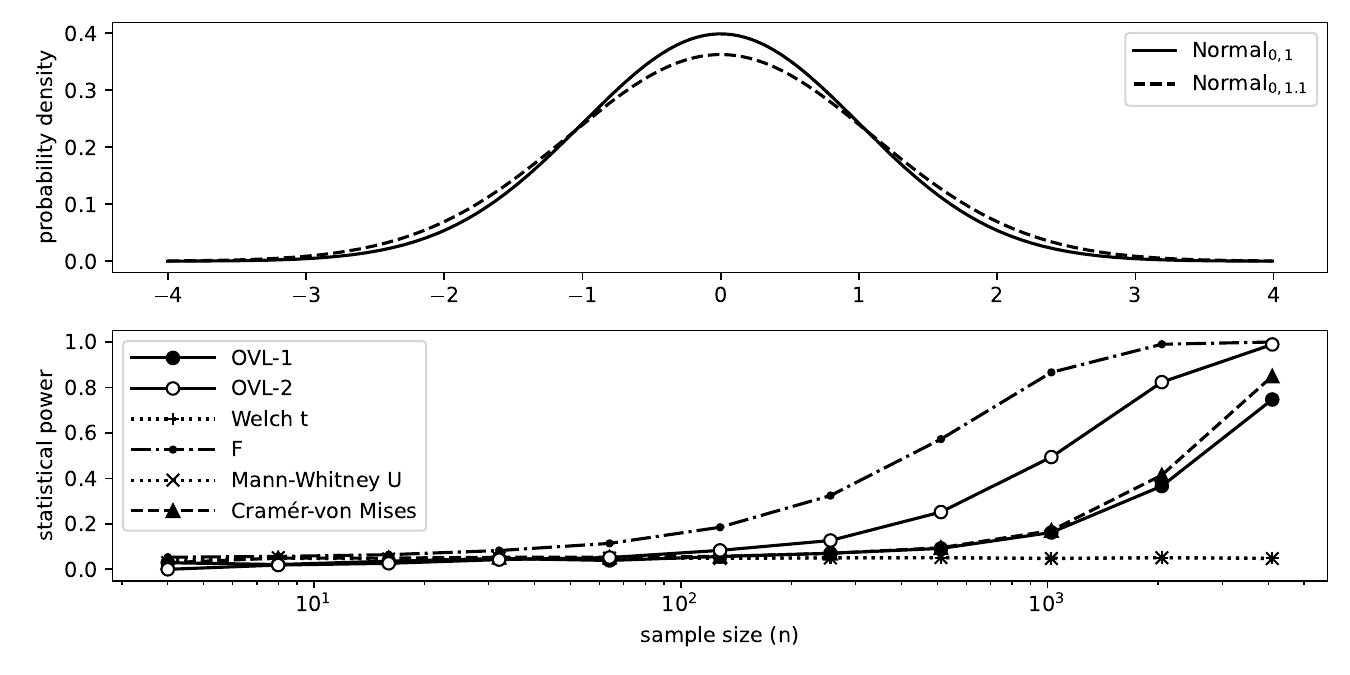}
	\caption{The random variables $X_1,\ldots,X_n,Y_1,\ldots,Y_n$ were realized with $f_0=\normal_{0,1}$ and $f_1=\normal_{0,1.1}$, and subjected to the statistical tests (the OVL-1, OVL-2, Welch $t$, $F$, Mann-Whitney $U$, and Cram\'er-von Mises test) to verify the null hypothesis $H_0: F_0 = F_1$ with 95\% confidence interval. This trial was repeated 20000 times independently for each $n\in\{2^2,2^3,\ldots,2^{12}\}$, and the statistical power of each test was evaluated. Note that $\normal_{0,1}$ has mean  $0$ and variance $1$, while $\normal_{0,1.1}$ has mean  $0$ and variance $1.21$.}
	\label{fig:1}
\end{figure}
\begin{figure}[htbp]
	\centering
	\includegraphics[width = \figscale\textwidth]{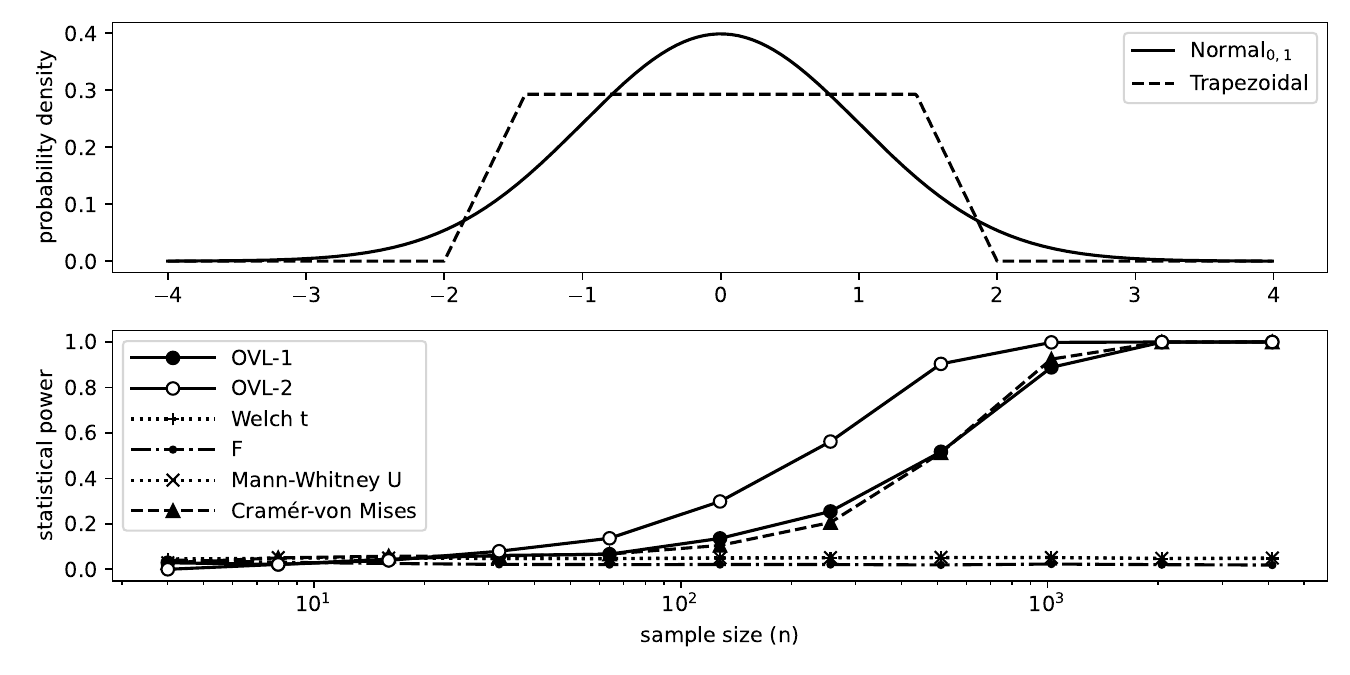}
	\caption{The random variables $X_1,\ldots,X_n,Y_1,\ldots,Y_n$ were realized with $f_0=\normal_{0,1}$ and $f_1=\trapezoidal$, and subjected to the statistical tests (the OVL-1, OVL-2, Welch $t$, $F$, Mann-Whitney $U$, and Cram\'er-von Mises test) to verify the null hypothesis $H_0: F_0 = F_1$ with 95\% confidence interval. This trial was repeated 20000 times independently for each $n\in\{2^2,2^3,\ldots,2^{12}\}$, and the statistical power of each test was evaluated. Note that $\normal_{0,1}$ and $\trapezoidal$ have mean $0$ and variance $1$.}
	\label{fig:2}
\end{figure}
\begin{figure}[htbp]
	\centering
	\includegraphics[width = \figscale\textwidth]{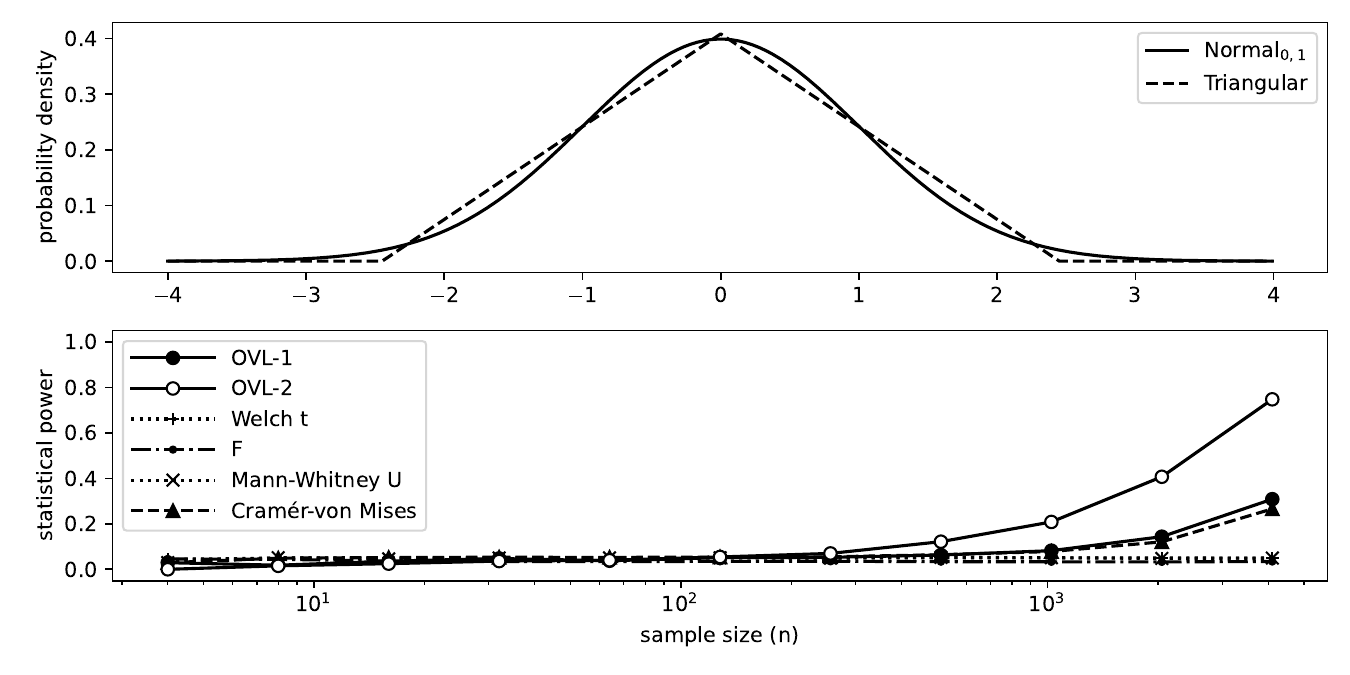}
	\caption{The random variables $X_1,\ldots,X_n,Y_1,\ldots,Y_n$ were realized with $f_0=\normal_{0,1}$ and $f_1=\triangular$, and subjected to the statistical tests (the OVL-1, OVL-2, Welch $t$, $F$, Mann-Whitney $U$, and Cram\'er-von Mises test) to verify the null hypothesis $H_0: F_0 = F_1$ with 95\% confidence interval. This trial was repeated 20000 times independently for each $n\in\{2^2,2^3,\ldots,2^{12}\}$, and the statistical power of each test was evaluated. Note that $\normal_{0,1}$ and $\triangular$ have mean $0$ and variance $1$.}
	\label{fig:3}
\end{figure}
\begin{figure}[htbp]
	\centering
	\includegraphics[width = \figscale\textwidth]{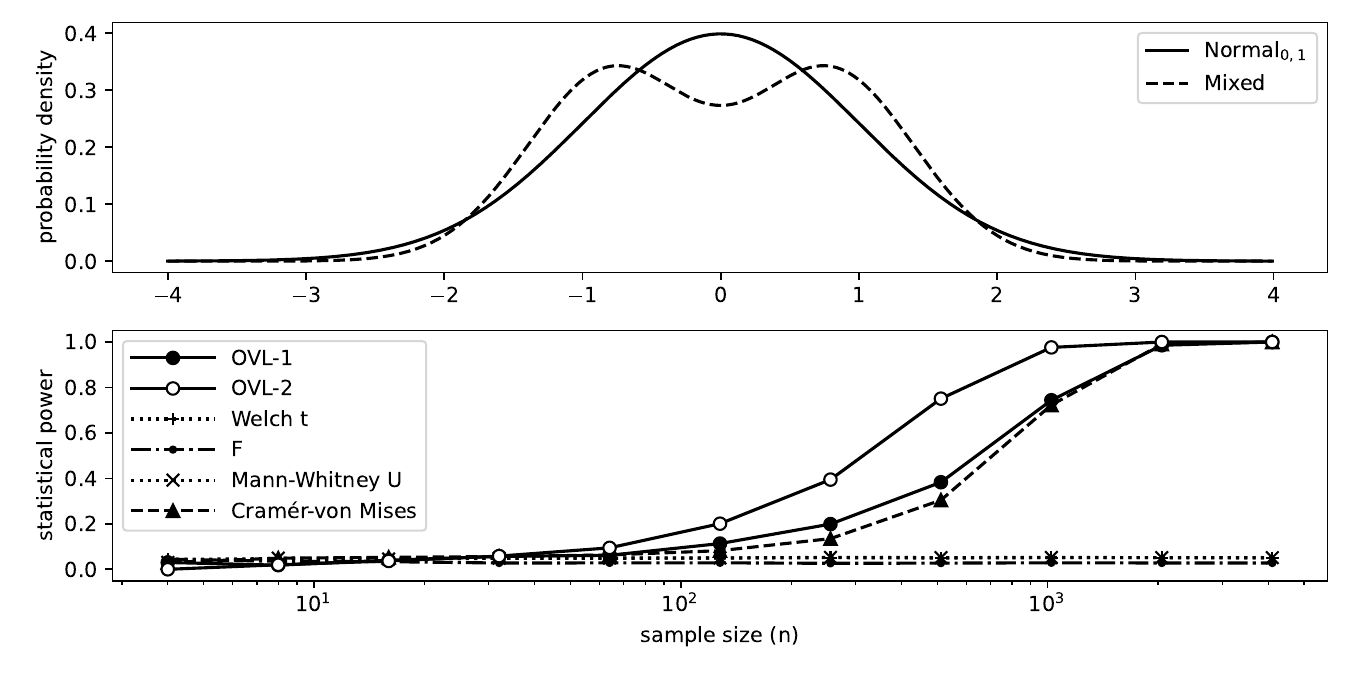}
	\caption{The random variables $X_1,\ldots,X_n,Y_1,\ldots,Y_n$ were realized with $f_0=\normal_{0,1}$ and $f_1=\mixed$, and subjected to the statistical tests (the OVL-1, OVL-2, Welch $t$, $F$, Mann-Whitney $U$, and Cram\'er-von Mises test) to verify the null hypothesis $H_0: F_0 = F_1$ with 95\% confidence interval. This trial was repeated 20000 times independently for each $n\in\{2^2,2^3,\ldots,2^{12}\}$, and the statistical power of each test was evaluated. Note that $\normal_{0,1}$ and $\mixed$ have mean $0$ and variance $1$.}
	\label{fig:4}
\end{figure}
\begin{figure}[htbp]
	\centering
	\includegraphics[width = \figscale\textwidth]{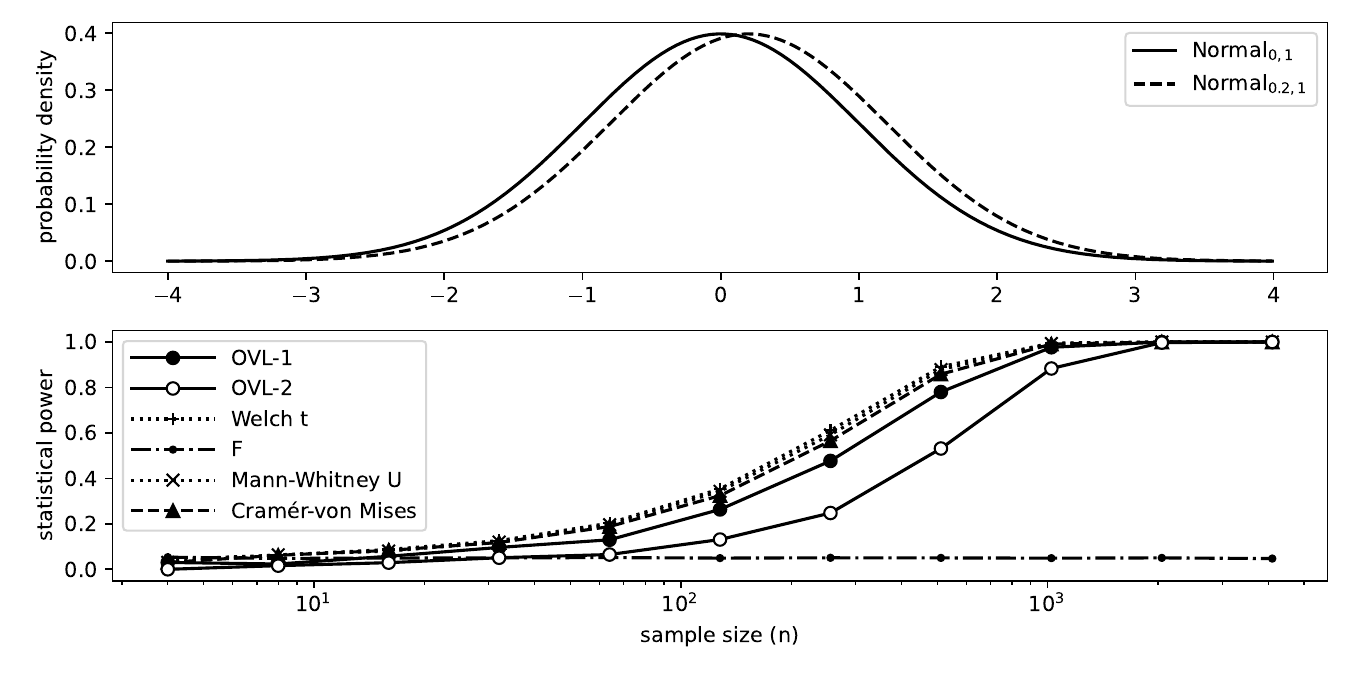}
	\caption{The random variables $X_1,\ldots,X_n,Y_1,\ldots,Y_n$ were realized with $f_0=\normal_{0,1}$ and $f_1=\normal_{0.2,1}$, and subjected to the statistical tests (the OVL-1, OVL-2, Welch $t$, $F$, Mann-Whitney $U$, and Cram\'er-von Mises test) to verify the null hypothesis $H_0: F_0 = F_1$ with 95\% confidence interval. This trial was repeated 20000 times independently for each $n\in\{2^2,2^3,\ldots,2^{12}\}$, and the statistical power of each test was evaluated. Note that $\normal_{0,1}$ has mean  $0$ and variance $1$, while $\normal_{0.2,1}$ has mean  $0.2$ and variance $1$.}
	\label{fig:5}
\end{figure}

\Cref{fig:1,fig:2,fig:3,fig:4,fig:5} show the experimental results:
\begin{itemize}
\item In the case $f_1=\normal_{0,1.1}$ where $f_0$ and $f_1$ were the densities of two normal distributions with identical means and different variances, the power of the $F$ test was the highest, followed by the OVL-2, Cram\'er-von Mises, OVL-1, and then Welch $t$ or Mann-Whitney $U$ test (\Cref{fig:1}).
\item In the case $f_1\in\{\trapezoidal,\triangular,\mixed\}$ where $f_0$ and $f_1$ were the densities of two different distributions with identical means and variances, the power of the OVL-2 test was the highest, followed by the OVL-1 or Cram\'er-von Mises, Welch $t$ or Mann-Whitney $U$, and then $F$ test (\Cref{fig:2,fig:3,fig:4}).
\item In the case $f_1=\normal_{0.2,1}$ where $f_0$ and $f_1$ were the densities of two normal distributions with different means and identical variances, the power of the Welch $t$ test was the highest, followed by the Mann-Whitney $U$, Cram\'er-von Mises, OVL-1, OVL-2, and then $F$ test (\Cref{fig:5}).
\end{itemize}

\section{Conclusion}\label{sec:concl}
Based on the OVL estimation, we have devised a novel statistical framework for two-sample testing: the OVL-$q$ ($q\in\N_+$), which can be regarded as a natural extension of the Smirnov test (since the OVL-1 is equivalent to the Smirnov test).
We have explained and implemented the algorithms for the OVL-$q$ (in particular, the fast OVL-2 algorithm).
Furthermore, we have demonstrated the superiority of the OVL-2 over conventional statistical tests in some experiments, although the reason for this is not clear at present.

One limitation is that we are currently unable to rapidly perform the OVL-2 if $m\ne n$ or the OVL-$q$ if $q\ge 3$.
To overcome this, we should explore the possibility of expanding fast and exact algorithms for the OVL-$q$, or should investigate the asymptotic distribution of $\rho_{q,m,n}$ (as $m,n\to\infty$) to approximate the OVL-$q$ in future works.
The treatment of ties (which may occur in $\Omega\setminus\wh{\Omega}$ if $F_0$ or $F_1$ is practically discontinuous) is also an important topic of research.
In addition, it is meaningful to further evaluate the statistical power of the OVL-$q$ both in simulations and in real observations.


\section{Proof for \Cref{thm:rhoh}}
\label{sec:proof.rhoh}
\begin{definition}\label{def:ascmp2}
In the setting of \Cref{def:ascmp}, we say that $\{\xi_i\}$ \textit{converges completely} to $\xi$ if
\begin{equation*}
\sum_{i=1}^\infty P\left(\left\{\omega\in\Omega: d\left(\xi_i(\omega),\xi(\omega)\right)>\epsilon\right\}\right) < \infty
\end{equation*}
for any $\epsilon>0$.
\end{definition}

\begin{remark}\label{rem:ascmp}
{\normalfont (See \cite{hsu47} for reference.)}
It is well known that $\{\xi_i\}$ converges almost surely to $\xi$ if and only if
\begin{equation*}
\lim_{l\to\infty}P\left(\bigcup_{i=l}^\infty\left\{\omega\in\Omega:d\left(\xi_i(\omega),\xi(\omega)\right)>\epsilon\right\}\right)=0
\end{equation*}
for any $\epsilon>0$.
Note that if $\{\xi_i\}$ converges completely to $\xi$, then $\{\xi_i\}$ converges almost surely to $\xi$.
\end{remark}
\begin{theorem}\label{Fhconv}
{\normalfont (The Glivenko-Cantelli theorem. See \cite[Theorem A, Section 2.1.4]{serfling80} for the proof.)}
As $m\to
\infty$ and $n\to\infty$, $$\sup_{x\in\R}|F_{0,m}(x)-F_0(x)|,\qquad \sup_{x\in\R}|F_{1,n}(x)-F_1(x)|$$ converge completely to $0$, respectively.
\end{theorem}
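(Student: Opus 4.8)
The plan is to derive, for every fixed $\epsilon>0$, an exponential tail bound
\[
P\Bigl(\bigl\{\omega\in\Omega:\sup_{x\in\R}|F_{0,m}(x)-F_0(x)|>\epsilon\bigr\}\Bigr)\le C_\epsilon\,e^{-c_\epsilon m}
\]
with constants $C_\epsilon,c_\epsilon>0$ depending only on $\epsilon$, and then to sum this inequality over $m$. Since the right-hand side is summable, complete convergence of $\{F_{0,m}\}$ to $F_0$ (in the sense of \cref{def:ascmp2}) follows at once. The two assertions are symmetric, so I would prove the bound for $F_{0,m}$ and note that the argument for $F_{1,n}$ is identical after replacing $m,F_0$ by $n,F_1$.

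First I would reduce the supremum over $\R$ to a maximum over a finite grid, exploiting the monotonicity of $F_0$ and $F_{0,m}$. Fix an integer $k>2/\epsilon$ and choose $-\infty=x_0<x_1<\cdots<x_k=\infty$ with $F_0(x_j)=j/k$, which is possible because $F_0$ is continuous. For $x\in[x_{j-1},x_j)$, monotonicity gives $F_{0,m}(x_{j-1})\le F_{0,m}(x)\le F_{0,m}(x_j)$ and $F_0(x_{j-1})\le F_0(x)\le F_0(x_j)$, while $F_0(x_j)-F_0(x_{j-1})=1/k<\epsilon/2$. Combining these yields
\[
\sup_{x\in\R}|F_{0,m}(x)-F_0(x)|\le\max_{0\le j\le k}|F_{0,m}(x_j)-F_0(x_j)|+\frac{\epsilon}{2}.
\]

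Next, for each fixed $x_j$ the quantity $F_{0,m}(x_j)$ in \cref{eq:F} is the average of the i.i.d.\ Bernoulli$(F_0(x_j))$ random variables $\1_{(-\infty,x_j]}(X_1),\ldots,\1_{(-\infty,x_j]}(X_m)$, so Hoeffding's inequality gives $P(|F_{0,m}(x_j)-F_0(x_j)|>\epsilon/2)\le 2e^{-m\epsilon^2/2}$. A union bound over the $k+1$ grid points, together with the previous display, produces
\[
P\Bigl(\bigl\{\omega\in\Omega:\sup_{x\in\R}|F_{0,m}(x)-F_0(x)|>\epsilon\bigr\}\Bigr)\le 2(k+1)\,e^{-m\epsilon^2/2},
\]
which is the desired summable bound. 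Summing over $m$ gives a convergent geometric series for each $\epsilon>0$, so $\{F_{0,m}\}$, and likewise $\{F_{1,n}\}$, converges completely.

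I do not expect a genuine obstacle, since this is a classical result; the only point needing care is the grid-reduction step, where one must track the direction of the monotonicity inequalities so that the between-gridpoint oscillation is absorbed into the $\epsilon/2$ slack, and continuity of $F_0$ keeps the construction clean by removing jump terms. As a shortcut one could instead invoke the Dvoretzky--Kiefer--Wolfowitz inequality, which supplies the bound $2e^{-2m\epsilon^2}$ directly; but the argument above is preferable as it relies only on Hoeffding's inequality and monotonicity.
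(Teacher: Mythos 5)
Your argument is correct. Note first that the paper does not actually prove this statement: it is quoted as the (complete-convergence form of the) Glivenko--Cantelli theorem with a citation to Serfling, so there is no in-paper proof to match. Your self-contained route --- reduce $\sup_{x\in\R}|F_{0,m}(x)-F_0(x)|$ to a maximum over a finite grid $\{x_j\}$ with $F_0(x_j)=j/k$ using monotonicity of $F_0$ and $F_{0,m}$, apply Hoeffding's inequality at each grid point, take a union bound, and sum the resulting geometric series in $m$ to verify the summability condition of \cref{def:ascmp2} --- is the standard proof of exactly this strengthened version and is essentially what the cited reference does. All the delicate points are handled properly: continuity of $F_0$ guarantees the grid points exist (via the intermediate value theorem), the two-sided monotonicity sandwich correctly absorbs the between-gridpoint oscillation $1/k<\epsilon/2$ into the slack, the endpoint cells $(-\infty,x_1)$ and $[x_{k-1},\infty)$ are covered by the conventions $F_0(\pm\infty)=F_{0,m}(\pm\infty)\in\{0,1\}$, and the final bound $2(k+1)e^{-m\epsilon^2/2}$ is summable in $m$ for each fixed $\epsilon$, which is precisely complete convergence. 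The only cosmetic omission is the measurability of the supremum, which follows from right-continuity by restricting to rational $x$ (the same device the paper uses in \cref{rem:rho_measurable}); mentioning it would make the argument airtight but is not a gap in substance.
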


\begin{lemma}\label{xyzw}
{\normalfont (See \cite[Lemma A.12]{johno21} for the proof.)}
If $x,y,z,w\in\R$, then 
\begin{itemize}
	\item[\rm{(a)}]\ $|\max\,\{x,y\}-\max\,\{z,w\}|\le |x-z|+|y-w|$,
	\item[\rm{(b)}]\ $|\min\,\{x,y\}-\min\,\{z,w\}|\le |x-z|+|y-w|$.
\end{itemize}
\end{lemma}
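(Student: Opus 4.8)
The plan is to prove the stronger fact that both $\max$ and $\min$ are $1$-Lipschitz with respect to the $\ell^\infty$ norm on $\R^2$, from which the stated bounds follow at once, since $\max\{|x-z|,|y-w|\}\le|x-z|+|y-w|$. The entire argument is elementary, so the only real work is organizing a clean case analysis and exploiting the symmetries of the inequality.

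First I would establish (a). Observe that the transformation $(x,y,z,w)\mapsto(y,x,w,z)$ leaves $\max\{x,y\}$, $\max\{z,w\}$, and $|x-z|+|y-w|$ all unchanged, so I may assume without loss of generality that $x\ge y$, whence $\max\{x,y\}=x$. Moreover, by the symmetry of the claim under interchanging the pairs $(x,y)$ and $(z,w)$, it suffices to bound the signed difference $\max\{x,y\}-\max\{z,w\}$ from above; the reverse bound then follows by swapping the two pairs. Since $\max\{z,w\}\ge z$, I obtain
$$\max\{x,y\}-\max\{z,w\}=x-\max\{z,w\}\le x-z\le|x-z|\le|x-z|+|y-w|.$$
Combining this with the symmetric inequality gives $\bigl|\max\{x,y\}-\max\{z,w\}\bigr|\le|x-z|+|y-w|$, which is (a).

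Next I would deduce (b) from (a) via the identity $\min\{a,b\}=-\max\{-a,-b\}$. Applying (a) to the quadruple $(-x,-y,-z,-w)$ and noting that $\bigl|(-x)-(-z)\bigr|=|x-z|$ and $\bigl|(-y)-(-w)\bigr|=|y-w|$, I get
$$\bigl|\min\{x,y\}-\min\{z,w\}\bigr|=\bigl|\max\{-x,-y\}-\max\{-z,-w\}\bigr|\le|x-z|+|y-w|,$$
where the first equality holds because $\min\{x,y\}-\min\{z,w\}=-\bigl(\max\{-x,-y\}-\max\{-z,-w\}\bigr)$. This is exactly (b).

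The main obstacle, such as it is, is purely presentational rather than mathematical: one must check that the two reductions in part (a)---the assumption $x\ge y$ and the restriction to a single signed difference---are genuine reductions that together leave no case uncovered. Since each is justified by an explicit exchange of variables under which every quantity in the inequality is invariant, no genuine difficulty arises, and part (b) requires no separate case analysis once the sign-flip identity is in hand.
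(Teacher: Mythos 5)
Your proof is correct. Note, however, that the paper itself gives no proof of this lemma: it simply cites Lemma A.12 of the authors' earlier work \cite{johno21}, so there is nothing in this document to compare your argument against. Your argument stands on its own as a complete elementary proof: the two symmetry reductions in part (a) are both legitimate, since each transformation (swapping within pairs, and swapping the two pairs) leaves the right-hand side $|x-z|+|y-w|$ invariant while either preserving the claim or converting the upper bound on $\max\{x,y\}-\max\{z,w\}$ into the matching bound on $\max\{z,w\}-\max\{x,y\}$; and the deduction of (b) from (a) via $\min\{a,b\}=-\max\{-a,-b\}$ is clean and gap-free. For reference, another standard route (and a likely candidate for what the cited lemma does) is to write $\max\{a,b\}=\tfrac12\left(a+b+|a-b|\right)$ and $\min\{a,b\}=\tfrac12\left(a+b-|a-b|\right)$, subtract, and apply the triangle inequality together with $\bigl||u|-|v|\bigr|\le|u-v|$; that avoids case analysis entirely, at the cost of invoking the closed-form identities. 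Either way, your version is perfectly adequate, and in fact proves the slightly stronger $\ell^\infty$-Lipschitz bound you mention.
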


In accordance with \Cref{thm:rhoh}, let
$F_0$ and $F_1$ be differentiable on $\R$ with continuous derivatives $f_0$ and $f_1$, respectively, $\#C'(f_0,f_1)<\infty$, $C(f_0,f_1)=\{c_1,\ldots,c_N\}$ with $c_1<\cdots<c_N$, $\bs{c}=(c_1,\ldots,c_N)$, $c_0=-\infty$, and $c_{N+1}=\infty$.

\begin{remark}\label{rem:rc_rho}
It follows from \Cref{eq:def.rho,def:rho_qmn} that $r(\bs{c})=\rho$.
\end{remark}

\begin{definition}\label{def:r_rmn}
For $q\in\N_+$, define
\begin{align*}
&\V_q = \argmin_{\bs{v}\in\R_\le^q}\,r(\bs{v}),\\
&\V_{q,m,n} = \argmin_{\bs{v}\in\R_\le^q}\,r_{m,n}(\bs{v}),\\
&\C_q = \{(c_{i_1},\ldots,c_{i_q}):1\le i_1<\cdots<i_q\le N\}.
\end{align*}
\end{definition}

\begin{remark}\label{VmCm}
It follows from \Cref{eq:def.rho_qmn,thm:rho_app,hwhv2,hwhv3} that $\V_q\ne\emptyset$ and $\V_{q,m,n}\ne\emptyset$ for all $q\in\N_+$.
It is obvious that $\C_N=\{\bs{c}\}$ and $\C_q=\emptyset$ if $q>N$.
\end{remark}


\begin{lemma}\label{lem:vcv}
Suppose $q\in\N_+$, $\bs{v}=(v_1,\ldots,v_q)\in\R_\le^q$, $v_0=-\infty$, and $v_{q+1}=\infty$.
If $v_{i}<c_s<v_{i+1}$ for some $i\in\{0,\ldots,q\}$ and $s\in\{1,\ldots,N\}$, then $r(\bs{v})>\rho$.
\end{lemma}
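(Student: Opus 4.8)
The plan is to compare $r(\bs v)$ with $\rho$ one interval at a time. Writing $F_0|_{v_i}^{v_{i+1}}=\int_{v_i}^{v_{i+1}}f_0\,\d x$ and similarly for $f_1$, I would first record the elementary bound that on any interval $[a,b]$,
\[
\min\left\{\int_a^b f_0\,\d x,\ \int_a^b f_1\,\d x\right\}\ \ge\ \int_a^b \min\{f_0,f_1\}\,\d x,
\]
since each integral on the left dominates the integral of the pointwise minimum. Summing over $i=0,\ldots,q$ and using $\int_{-\infty}^\infty\min\{f_0,f_1\}\,\d x=\rho$ (\cref{rmk:f0f1}) gives $r(\bs v)\ge\rho$ for every $\bs v$, so the entire content of the lemma is to exhibit a strictly positive gap on the interval containing $c_s$.

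To quantify the gap I would set, on each interval $(v_i,v_{i+1})$, $P_i=\int_{v_i}^{v_{i+1}}\max\{f_0-f_1,0\}\,\d x$ and $M_i=\int_{v_i}^{v_{i+1}}\max\{f_1-f_0,0\}\,\d x$. Using $\min\{A,B\}=\tfrac12(A+B-|A-B|)$ together with $\int\min\{f_0,f_1\}=\tfrac12\int(f_0+f_1-|f_0-f_1|)$, the per-interval gap collapses to
\[
\min\left\{F_0|_{v_i}^{v_{i+1}},F_1|_{v_i}^{v_{i+1}}\right\}-\int_{v_i}^{v_{i+1}}\min\{f_0,f_1\}\,\d x=\min\{P_i,M_i\},
\]
because $\int|f_0-f_1|=P_i+M_i$ while the difference of the two interval masses equals $P_i-M_i$. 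Hence $r(\bs v)=\rho+\sum_{i=0}^q\min\{P_i,M_i\}$, and it suffices to prove $\min\{P_i,M_i\}>0$ for the index $i$ with $v_i<c_s<v_{i+1}$.

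For that index I would invoke the definition of crossover point from \cref{rmk:f0f1}. Taking the neighborhood $V$ of $c_s$ furnished there and shrinking it so that $V\subset(v_i,v_{i+1})$ (possible since $c_s$ is strictly interior to $(v_i,v_{i+1})$), the defining biconditional, applied with $a=b$, forces $f_0-f_1$ to be nonvanishing on $V\setminus\{c_s\}$, and applied to points on opposite sides of $c_s$ it forces $f_0-f_1$ to take strictly opposite signs there. By continuity of $f_0-f_1$ there are then subintervals of $(v_i,v_{i+1})$ of positive length on which $f_0-f_1>0$ and on which $f_0-f_1<0$, whence both $P_i>0$ and $M_i>0$, so $\min\{P_i,M_i\}>0$ and therefore $r(\bs v)>\rho$. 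The main obstacle is precisely the careful reading of the crossover definition: extracting from its quantified biconditional the clean statement that $f_0-f_1$ changes sign across $c_s$ and stays nonzero on each side, which is exactly what makes both one-sided masses strictly positive.
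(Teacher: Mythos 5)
Your proof is correct, and at its core it is the same argument as the paper's: localize to the interval $(v_i,v_{i+1})$ containing $c_s$ and show that the strict sign change of $f_0-f_1$ across $c_s$ forces a strictly positive gap there. Two organizational differences are worth noting. First, where you derive the exact identity $r(\bs v)-\rho=\sum_{i}\min\{P_i,M_i\}$ and then show that both one-sided masses $P_i$ and $M_i$ are positive, the paper instead splits into the cases $F_0|_{v_i}^{v_{i+1}}\le F_1|_{v_i}^{v_{i+1}}$ and $F_0|_{v_i}^{v_{i+1}}>F_1|_{v_i}^{v_{i+1}}$ and in each case bounds the gap below by a single one-sided integral such as $\int_{c_s}^{b}(f_0-f_1)\,\d x$; your identity handles both cases at once and is arguably cleaner. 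Second, to see that $f_0-f_1$ is nonvanishing near $c_s$ (away from $c_s$ itself), the paper invokes $\#C'(f_0,f_1)<\infty$ to isolate $c_s$ in an interval $U$ containing no other coincidence points, whereas you extract the same fact directly from the quantified biconditional in the crossover definition by taking $a=b$; both readings are valid, and yours does not consume the hypothesis $\#C'(f_0,f_1)<\infty$ at this step. The algebra identifying the per-interval gap with $\min\{P_i,M_i\}$ is right, and continuity of $f_0-f_1$ indeed yields positive-length subintervals of each sign, so $P_i>0$ and $M_i>0$ and the conclusion follows.
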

\begin{proof}
Since $\#C'(f_0,f_1)<\infty$, there is an open interval $U\subset (v_{i}, v_{i+1})$ with $U\cap C'(f_0, f_1) = \{c_s\}$, so that $[f_0(a)-f_1(a)][f_0(b)-f_1(b)]<0$ for all $a,b\in U$ with $a<c_s<b$.
Now fix such $a$ and $b$.
Without loss of generality, we assume that $f_0(a)<f_1(a)$ and $f_0(b)>f_1(b)$.
If $ F_0|_{v_i}^{v_{i+1}}\le  F_1|_{v_i}^{v_{i+1}}$, then 
\begin{align*}
&r(\bs{v})-\rho\\
&= \sum_{j=0}^{q}\Biggl(\min\left\{\int_{v_{j}}^{v_{j+1}}f_0(x)\:\d x, \int_{v_{j}}^{v_{j+1}}f_1(x)\:\d x\right\} \\
&\omit\hfill$\displaystyle - \int_{v_{j}}^{v_{j+1}}\min\,\{f_0(x),f_1(x)\}\:\d x\Biggr) $\\
&\ge \min\left\{\int_{v_{i}}^{v_{i+1}}f_0(x)\:\d x, \int_{v_{i}}^{v_{i+1}}f_1(x)\:\d x\right\} - \int_{v_{i}}^{v_{i+1}}\min\,\{f_0(x),f_1(x)\}\:\d x\\
&= \int_{v_{i}}^{v_{i+1}}\left(f_0(x) - \min\,\{f_0(x),f_1(x)\}\right)\:\d x\\
&\ge \int_{c_s}^{b}\left(f_0(x) - f_1(x)\right)\:\d x\\
&> 0.
\end{align*}
We can similarly prove that $r(\bs{v})-\rho>0$ if $ F_0|_{v_i}^{v_{i+1}}>  F_1|_{v_i}^{v_{i+1}}$.
\end{proof}

\begin{theorem}\label{thm:rho_app}
The minimum of $r$ on $\R_\le^N$ is $\rho$, which is uniquely attained at $\bs{c}$.
\end{theorem}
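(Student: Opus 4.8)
The plan is to prove the statement in two stages: establish the lower bound $r(\bs v)\ge\rho$ for every $\bs v\in\R_\le^N$, which combined with \cref{rem:rc_rho} shows the minimum equals $\rho$ and is attained at $\bs c$, and then argue that $\bs c$ is the only minimizer.

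For the lower bound, I would exploit the elementary inequality $\int_{v_i}^{v_{i+1}}\min\{f_0,f_1\}\,\d x\le\min\bigl\{\int_{v_i}^{v_{i+1}}f_0\,\d x,\int_{v_i}^{v_{i+1}}f_1\,\d x\bigr\}$, valid on each subinterval because the left-hand side is dominated separately by each integral on the right. Since $\int_{v_i}^{v_{i+1}}f_j\,\d x=F_j|_{v_i}^{v_{i+1}}$ by the fundamental theorem of calculus, summing this over $i=0,\ldots,N$ and recognizing that the integrals of $\min\{f_0,f_1\}$ telescope over $\R$ yields $r(\bs v)\ge\infint\min\{f_0,f_1\}\,\d x=\rho$ by \cref{eq:rhoOVL}. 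With $r(\bs c)=\rho$ from \cref{rem:rc_rho}, this gives that the minimum of $r$ on $\R_\le^N$ exists, equals $\rho$, and is attained at $\bs c$.

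For uniqueness, I would take any $\bs v=(v_1,\ldots,v_N)\in\R_\le^N$ with $r(\bs v)=\rho$ and apply the contrapositive of \cref{lem:vcv}: since $r(\bs v)$ is not strictly larger than $\rho$, no crossover point can lie strictly inside any subinterval $(v_i,v_{i+1})$. As a finite number fails to be strictly interior to all of these subintervals exactly when it equals one of $v_1,\ldots,v_N$, each crossover point $c_1,\ldots,c_N$ must appear among the coordinates $v_1,\ldots,v_N$. A counting argument then closes the proof: the $N$ distinct values $c_1,\ldots,c_N$ are all required to sit among the $N$ weakly increasing coordinates, so no coordinate may repeat and each coordinate must be a crossover point; the common monotonicity of $\bs v$ and $\bs c$ then forces $v_j=c_j$ for all $j$, i.e.\ $\bs v=\bs c$.

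I expect the lower bound and the use of \cref{lem:vcv} to be essentially routine, since the substantive analysis of the sign changes of $f_0-f_1$ is already packaged in that lemma. The only place deserving care is the final counting step, where one must confirm that $N$ distinct crossover points occupying $N$ ordered slots leaves no freedom for repetition or for extra non-crossover coordinates, so that the ordered matching $v_j=c_j$ is forced rather than merely one admissible possibility.
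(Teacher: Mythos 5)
Your proposal is correct and rests on the same two ingredients as the paper's proof, namely \cref{rem:rc_rho} and \cref{lem:vcv} together with the counting observation that $N$ distinct crossover points occupying $N$ ordered coordinates forces $\bs{v}=\bs{c}$; the paper merely runs the argument in the contrapositive direction ($\bs{v}\ne\bs{c}$ implies some $c_s$ lies strictly inside a gap, hence $r(\bs{v})>\rho=r(\bs{c})$), which makes your separate lower-bound step $r(\bs{v})\ge\rho$ unnecessary, though it is correct and already implicit in the first inequality chain of the proof of \cref{lem:vcv}.
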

\begin{proof}
If $\bs{v}=(v_1,\ldots,v_N)\in\R_\le^N$ with $\bs{v}\ne\bs{c}$, then $c_s\notin\{v_1,\dots,v_N\}$ for some $s$, hence $v_{i}<c_s<v_{i+1}$ for some $i$ as in the assumption of \Cref{lem:vcv}, so that $r(\bs{v})>\rho=r(\bs{c})$ (note \Cref{rem:rc_rho}).
\end{proof}

\begin{theorem}\label{delISc2}
If $q\in\{1,\ldots,N-1\}$ and $\bs{v}\in\R_\le^q$, then $r(\bs{v})>r(\bs{c})$.
\end{theorem}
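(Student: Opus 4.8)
The plan is to reduce the statement to \cref{lem:vcv} by a pigeonhole argument. The guiding intuition is that \cref{thm:rho_app} shows the unique minimizer of $r$ on $\R_\le^N$ is $\bs{c}=(c_1,\ldots,c_N)$, which works precisely because its $N$ entries can isolate each crossover point; a vector $\bs{v}\in\R_\le^q$ with $q<N$ simply does not have enough components to do so, and \cref{lem:vcv} converts any missed crossover point into a strict inequality.

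First I would note that the set $\{v_1,\ldots,v_q\}$ contains at most $q$ distinct real numbers, since it is listed by $q$ entries (which need not be distinct). Because $q<N$, this set cannot contain all $N$ crossover points $c_1,\ldots,c_N$, so there exists an index $s\in\{1,\ldots,N\}$ with $c_s\notin\{v_1,\ldots,v_q\}$.

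Next I would locate this $c_s$ within the extended partition $v_0=-\infty\le v_1\le\cdots\le v_q\le v_{q+1}=\infty$. Taking $i$ to be the largest index in $\{0,\ldots,q\}$ with $v_i<c_s$ (which is well defined since $v_0=-\infty<c_s$), maximality gives $v_{i+1}\ge c_s$; and since $c_s\notin\{v_1,\ldots,v_q\}$ while $v_{q+1}=\infty>c_s$, the inequality is in fact strict, so $v_i<c_s<v_{i+1}$. This is exactly the hypothesis of \cref{lem:vcv}, which yields $r(\bs{v})>\rho$. Combining this with $\rho=r(\bs{c})$ from \cref{rem:rc_rho} gives the conclusion $r(\bs{v})>r(\bs{c})$.

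I do not expect a serious obstacle. The only subtlety is the possibility of repeated values among the $v_i$, which would make some intervals $(v_i,v_{i+1})$ empty; this is handled cleanly by framing the counting step in terms of the number of \emph{distinct} entries and by the ``largest index'' selection above, which guarantees the required strict two-sided inequality $v_i<c_s<v_{i+1}$ even in the presence of ties.
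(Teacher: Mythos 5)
Your proposal is correct and follows essentially the same route as the paper: the paper also picks a crossover point $c_s\notin\{v_1,\ldots,v_q\}$ (possible since $q<N$), places it strictly between consecutive partition points, and invokes \cref{lem:vcv} together with \cref{rem:rc_rho}. Your explicit ``largest index'' argument just spells out a detail the paper leaves implicit.
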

\begin{proof}
Since $q<N$, $c_s\notin\{v_1,\dots,v_q\}$ for some $s$.
The proof is similar as that of \Cref{thm:rho_app}.
\end{proof}

\begin{theorem}\label{hwhv}
If $q\in\{1,\ldots,N-1\}$, then for any $\bs{v}=(v_1,\ldots,v_q)\in\R_\le^q$, there exists $\bs{w}=(c_{j_1},\ldots,c_{j_q})$ with $1\le j_1\le\cdots\le j_q\le N$ such that $r(\bs{w})\le r(\bs{v})$.
\end{theorem}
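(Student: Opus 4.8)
The plan is to reduce \cref{hwhv} to a one‑breakpoint optimization and then iterate. The engine is the following single‑variable claim: fix $-\infty\le a<b\le\infty$ and set
\[
	h(t) = \min\{F_0|_a^t, F_1|_a^t\} + \min\{F_0|_t^b, F_1|_t^b\}\qquad (t\in[a,b]);
\]
then $\min_{t\in[a,b]}h(t)$ is attained at $t=a$, at $t=b$, or at some crossover point $c_s\in(a,b)$.

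To prove this claim I would differentiate $h$ on each maximal open subinterval of $(a,b)$ on which $f_0-f_1$ keeps a constant sign. Since $\#C'(f_0,f_1)<\infty$, these subintervals are precisely the gaps between consecutive points of $C(f_0,f_1)$, of which there are finitely many (coincidence points that are not crossovers do not split a sign‑constant piece). Writing $A(t)=F_0|_a^t-F_1|_a^t$ and $B(t)=F_0|_t^b-F_1|_t^b$, the derivative $h'(t)$ equals $0$ when $A(t)$ and $B(t)$ share a sign and equals $\pm(f_0(t)-f_1(t))$ otherwise. On a sign‑constant piece $A$ is monotone and $B$ is monotone in the opposite direction, so a short case analysis on the signs of $A,B$ shows that $h$ is nondecreasing and then nonincreasing there; hence its minimum over the closed piece lies at an endpoint of the piece, i.e.\ at a crossover point or at $a$ or $b$. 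Minimizing over the finitely many pieces gives the claim. I expect this derivative/unimodality computation to be the main obstacle; everything afterward is bookkeeping.

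Given the claim, I would prove \cref{hwhv} by an improvement procedure starting from $\bs v$. Because empty intervals contribute $0$ to $r$, repeated coordinates and coordinates equal to $\pm\infty$ do not affect the value, so I track only the distinct finite coordinates. If some distinct finite coordinate $u$ is not a crossover point, take the maximal block of coordinates equal to $u$; its immediate neighbors $L<u<R$ (allowing $L=-\infty$ or $R=\infty$) are strict by maximality. Moving the whole block is equivalent to moving one breakpoint from $u$ within $[L,R]$, since the in‑block intervals stay empty; so by the claim I may relocate the block to a crossover point in $(L,R)$ or to $L$ or $R$ without increasing $r$. Using the number of distinct finite non‑crossover values as a potential, each relocation deletes the value $u$ and introduces no new non‑crossover value (whether the block snaps to a crossover or merges into a neighbor), so the potential drops by exactly $1$ and the procedure terminates.

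At termination every distinct finite coordinate is a crossover point, say $c_{i_1}<\cdots<c_{i_p}$ with $p\le q$ and $r((c_{i_1},\ldots,c_{i_p}))\le r(\bs v)$. Finally I would pad this $p$-tuple to length $q$ by repeating one entry; this is possible because $N\ge q+1\ge 2$, it leaves $r$ unchanged, and it yields $\bs w=(c_{j_1},\ldots,c_{j_q})$ with $1\le j_1\le\cdots\le j_q\le N$ and $r(\bs w)\le r(\bs v)$, as required. In the degenerate case $p=0$ one has $r(\bs v)=1$ and any crossover tuple works.
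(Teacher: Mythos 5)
Your proof is correct, and while it shares the paper's global strategy for \cref{hwhv} --- iteratively relocating non-crossover breakpoints and inducting on how many remain (the paper's potential is $\eta(\bs v)=\#\{i:v_i\notin C(f_0,f_1)\}$; yours is the number of distinct finite non-crossover values) --- the engine of the local step is genuinely different. The paper picks a non-crossover coordinate $v_i$ lying strictly between consecutive crossover points and shows by a four-way case analysis on the signs of $F_0-F_1$ over $(v_{i-1},v_i)$ and $(v_i,v_{i+1})$ (cases (I), (II-i), (II-ii-a), (II-ii-b)) that $v_i$ can be snapped to an adjacent crossover point without increasing $r$. You instead isolate a one-variable lemma: with the neighbors $a<b$ frozen, $h(t)=\min\{F_0|_a^t,F_1|_a^t\}+\min\{F_0|_t^b,F_1|_t^b\}$ is nondecreasing-then-nonincreasing on each piece where $f_0-f_1$ keeps a constant sign, so its minimum over $[a,b]$ is attained at $a$, $b$, or a crossover point. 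Your monotonicity claim checks out; a clean way to confirm it is that minimizing $h$ is the same as maximizing $|A(t)|+|B(t)|$ where $A(t)=F_0|_a^t-F_1|_a^t$, $B(t)=F_0|_t^b-F_1|_t^b$, $A+B$ is constant, and $A'=-B'=f_0-f_1$ has constant sign on the piece, so the objective is a convex function of the monotone quantity $A(t)$ and is maximized at a piece endpoint. What your route buys is uniformity (the paper's four cases collapse into one statement) and it automatically accommodates the block of repeated coordinates and the possibility that a breakpoint merges with a neighbor or escapes to $\pm\infty$ --- which is precisely why the theorem's conclusion permits repeated indices $j_1\le\cdots\le j_q$, and your final padding step exploits this correctly. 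The only spots to tighten are routine: handle the finitely many points where $A$ or $B$ vanishes (where $h$ may have a kink but remains continuous and the piecewise monotonicity still glues), and note that the minimum over an unbounded piece is attained using the conventions $F_j(\pm\infty)=1,0$ fixed in \cref{def:rho_qapp}.
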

\begin{proof}
Let $\bs{v}=(v_1,\ldots,v_q)\in\R_\le^q$, $v_0=-\infty$, $v_{q+1}=\infty$, and $$\eta(\bs{v})=\#\{i\in\{1,\ldots,q\}: v_i\notin C(f_0, f_1)\}.$$
The statement obviously holds when $\eta(\bs{v})=0$.
Hence suppose $\eta(\bs{v})>0$.
	Then we can choose $i\in\{1,\ldots,q\}$ and $s\in\{1,\ldots,N\}$ satisfying $c_{s-1}<v_i<c_s\le v_{i+1}$ or $v_{i-1}\le c_s<v_i<c_{s+1}$.
	We will only prove the case $c_{s-1}<v_i<c_s\le v_{i+1}$, as the other is similar.
Without loss of generality, we may assume that $f_0\le f_1$ on $(c_{s-1}, c_s)$, so that $ F_0|_{c_{s-1}}^{v_i}<  F_1|_{c_{s-1}}^{v_i}$ and $ F_0|_{v_i}^{c_s}<  F_1|_{v_i}^{c_s}$, since $\#C'(f_0,f_1)<\infty$.
In the following, we consider the cases (I) $ F_0|_{v_{i-1}}^{v_{i}}\le F_1|_{v_{i-1}}^{v_{i}}$ and (II) $ F_0|_{v_{i-1}}^{v_{i}}> F_1|_{v_{i-1}}^{v_{i}}$.

(I) Suppose $ F_0|_{v_{i-1}}^{v_{i}}\le F_1|_{v_{i-1}}^{v_{i}}$.
Then
\begin{align*}
& F_0|_{v_{i-1}}^{c_s}< F_1|_{v_{i-1}}^{c_s},\\
& F_j|_{v_{i-1}}^{c_s}= F_j|_{v_{i-1}}^{v_i}+ F_j|_{v_i}^{c_s}\qquad (j=0,1),\\
& F_j|_{c_s}^{v_{i+1}}= F_j|_{v_i}^{v_{i+1}}- F_j|_{v_i}^{c_s}\qquad (j=0,1),
\end{align*}
hence
\begin{align*}
\min_j\, F_j|_{v_{i-1}}^{c_s} + \min_j\, F_j|_{c_s}^{v_{i+1}}
&= F_0|_{v_{i-1}}^{c_s} + \min_j\,(F_j|_{v_i}^{v_{i+1}}- F_j|_{v_i}^{c_s})\\
&= F_0|_{v_{i-1}}^{v_i} +  F_0|_{v_i}^{c_s} + \min_j\,( F_j|_{v_i}^{v_{i+1}}- F_j|_{v_i}^{c_s})\\
&\le F_0|_{v_{i-1}}^{v_i} +  F_0|_{v_i}^{c_s} + \min_j\, F_j|_{v_i}^{v_{i+1}}- F_0|_{v_i}^{c_s}\\
&= F_0|_{v_{i-1}}^{v_i} + \min_j\, F_j|_{v_i}^{v_{i+1}}\\
&=\min_j\, F_j|_{v_{i-1}}^{v_i} + \min_j\, F_j|_{v_i}^{v_{i+1}},
\end{align*}
and setting $\bs{v}'=(v_1,\ldots,v_{i-1},c_s,v_{i+1},\ldots,v_q)\in\R_\le^q$ results in $\eta(\bs{v}')<\eta(\bs{v})$ and $r(\bs{v}')\le r(\bs{v})$.

(II) Suppose $ F_0|_{v_{i-1}}^{v_{i}}> F_1|_{v_{i-1}}^{v_{i}}$.
Since $f_0\le f_1$ on $(c_{s-1},c_s)$, we can see that $v_{i-1}<c_{s-1}<v_i$ and $ F_0|_{v_{i-1}}^{c_{s-1}}> F_1|_{v_{i-1}}^{c_{s-1}}$.
(II-i) First consider the case $ F_0|_{v_i}^{v_{i+1}}\le F_1|_{v_i}^{v_{i+1}}$.
Then $ F_0|_{c_{s-1}}^{v_{i+1}}< F_1|_{c_{s-1}}^{v_{i+1}}$, hence
\begin{align*}
\min_j\, F_j|_{v_{i-1}}^{c_{s-1}} + \min_j\, F_j|_{c_{s-1}}^{v_{i+1}}
&= F_1|_{v_{i-1}}^{c_{s-1}} +  F_0|_{c_{s-1}}^{v_{i+1}}\\
&= F_1|_{v_{i-1}}^{c_{s-1}} +  F_0|_{c_{s-1}}^{v_i} +  F_0|_{v_i}^{v_{i+1}}\\
&< F_1|_{v_{i-1}}^{c_{s-1}} +  F_1|_{c_{s-1}}^{v_i} +  F_0|_{v_i}^{v_{i+1}}\\
&= F_1|_{v_{i-1}}^{v_i} +  F_0|_{v_i}^{v_{i+1}}\\
&=\min_j\, F_j|_{v_{i-1}}^{v_i} + \min_j\, F_j|_{v_i}^{v_{i+1}},
\end{align*}
and setting $\bs{v}'=(v_1,\ldots,v_{i-1},c_{s-1},v_{i+1},\ldots,v_q)\in\R_\le^q$ results in $\eta(\bs{v}')<\eta(\bs{v})$ and $r(\bs{v}')< r(\bs{v})$.
(II-ii) Next consider the case $ F_0|_{v_i}^{v_{i+1}}> F_1|_{v_i}^{v_{i+1}}$.
(II-ii-a) If there is $x\in (c_{s-1},v_i)$ such that $ F_0|_{x}^{v_{i+1}}\le F_1|_{x}^{v_{i+1}}$, then $ F_0|_{v_{i-1}}^{x}> F_1|_{v_{i-1}}^{x}$, hence the case (II-i) applies to $\bs{v}''=(v_1,\ldots,v_{i-1},x,v_{i+1},\ldots,v_q)\in\R_\le^q$, where $\eta(\bs{v}'')=\eta(\bs{v})$ and 
\begin{align*}
r(\bs{v}'')-r(\bs{v})
&=\min_{j}\, F_j|_{v_{i-1}}^{x} + \min_{j}\, F_j|_{x}^{v_{i+1}} - \min_{j}\, F_j|_{v_{i-1}}^{v_i} - \min_{j}\, F_j|_{v_i}^{v_{i+1}}\\
&= F_1|_{v_{i-1}}^{x} +  F_0|_{x}^{v_{i+1}} -  F_1|_{v_{i-1}}^{v_i} -  F_1|_{v_i}^{v_{i+1}}\\
&\le F_1|_{v_{i-1}}^{x} +  F_1|_{x}^{v_{i+1}} -  F_1|_{v_{i-1}}^{v_i} -  F_1|_{v_i}^{v_{i+1}}\\
&=0.
\end{align*}
(II-ii-b) If $ F_0|_{x}^{v_{i+1}}> F_1|_{x}^{v_{i+1}}$ for any $x\in (c_{s-1},v_i)$, then $ F_0|_{c_{s-1}}^{v_{i+1}}\ge F_1|_{c_{s-1}}^{v_{i+1}}$, and setting $\bs{v}'=(v_1,\ldots,v_{i-1},c_{s-1},v_{i+1},\ldots,v_q)\in\R_\le^q$ results in $\eta(\bs{v}')<\eta(\bs{v})$ and 
\begin{align*}
r(\bs{v}')-r(\bs{v})
&=\min_{j}\, F_j|_{v_{i-1}}^{c_{s-1}} + \min_{j}\, F_j|_{c_{s-1}}^{v_{i+1}} - \min_{j}\, F_j|_{v_{i-1}}^{v_i} - \min_{j}\, F_j|_{v_i}^{v_{i+1}}\\
&= F_1|_{v_{i-1}}^{c_{s-1}} +  F_1|_{c_{s-1}}^{v_{i+1}} -  F_1|_{v_{i-1}}^{v_i} -  F_1|_{v_i}^{v_{i+1}}\\
&=0.
\end{align*}
Taken together, for any $\bs{v}\in\R_\le^q$ with $\eta(\bs{v})>0$, there exists $\bs{v}'\in\R_\le^q$ such that $\eta(\bs{v}')<\eta(\bs{v})$ and $r(\bs{v}')\le r(\bs{v})$.
The statement follows by induction.
\end{proof}

\begin{corollary}\label{hwhv2}
If $q\in\{1,\ldots,N-1\}$, then there exists $\bs{c}'\in\C_q$ such that $r(\bs{c}')=\inf\,\{r(\bs{v}): \bs{v}\in\R_\le^q\}$.
Furthermore, $r(\bs{c}')>r(\bs{c})$.
\end{corollary}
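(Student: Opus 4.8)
The plan is to combine \cref{hwhv} with a discrete ``interval-splitting'' argument. First I would observe that \cref{hwhv} reduces the infimum over the uncountable set $\R_\le^q$ to a minimum over a finite set: for every $\bs v\in\R_\le^q$ there is a tuple of crossover points $\bs w=(c_{j_1},\ldots,c_{j_q})$ with $1\le j_1\le\cdots\le j_q\le N$ and $r(\bs w)\le r(\bs v)$. Since there are only finitely many such tuples, and each lies in $\R_\le^q$, the quantity $\inf\{r(\bs v):\bs v\in\R_\le^q\}$ equals the minimum of $r$ over this finite collection and is therefore attained by some $\bs w$. The only gap between this $\bs w$ and the desired $\bs c'$ is that the indices $j_1\le\cdots\le j_q$ need not be strictly increasing, i.e.\ $\bs w$ may repeat some crossover points and hence need not lie in $\C_q$.

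The core of the argument is to remove such repetitions without increasing $r$. I would first record that $r(\bs v)$ depends only on the \emph{distinct} coordinates of $\bs v$: whenever two consecutive coordinates coincide, the corresponding summand $\min\{F_0|_{v_i}^{v_{i+1}},F_1|_{v_i}^{v_{i+1}}\}$ equals $\min\{0,0\}=0$ and the partition of $\R$ induced by $\bs v$ is unchanged. Now suppose the attained minimizer $\bs w$ uses only $p<q$ distinct crossover points. Because $p<q\le N-1<N$, at least one crossover point $c_s$ is unused; inserting $c_s$ splits exactly one interval $(c_{k_t},c_{k_{t+1}})$ of the current partition into $(c_{k_t},c_s)$ and $(c_s,c_{k_{t+1}})$, and the subadditivity of the minimum,
\[
\min\{a_1,b_1\}+\min\{a_2,b_2\}\le\min\{a_1+a_2,\,b_1+b_2\},
\]
applied with the $F_0$-increments as the $a_i$ and the $F_1$-increments as the $b_i$ over the two subintervals, shows that this split does not increase $r$. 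Concretely, I would delete one copy of a repeated coordinate of $\bs w$ and replace it by $c_s$, obtaining (after re-sorting) a tuple $\bs w'\in\R_\le^q$ with $p+1$ distinct crossover points and $r(\bs w')\le r(\bs w)$; minimality then forces $r(\bs w')=r(\bs w)$. Iterating this step at most $q-p$ times produces a minimizer whose $q$ coordinates are pairwise distinct crossover points, i.e.\ an element $\bs c'\in\C_q$ with $r(\bs c')=\inf\{r(\bs v):\bs v\in\R_\le^q\}$.

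Finally, the \textbf{Furthermore} claim is immediate: since $\bs c'\in\C_q\subset\R_\le^q$ and $q\in\{1,\ldots,N-1\}$, \cref{delISc2} gives $r(\bs c')>r(\bs c)$ directly. I expect the main obstacle to be the bookkeeping in the splitting step---verifying that degenerate intervals contribute nothing, that inserting an unused crossover point affects exactly one summand, and that the replacement keeps the tuple in $\R_\le^q$ after re-sorting---rather than any analytic difficulty, since the one nontrivial estimate is just the elementary subadditivity of $\min$.
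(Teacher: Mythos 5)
Your proposal is correct and follows essentially the same route as the paper: take a minimizer among the finitely many tuples supplied by \cref{hwhv}, then pass from a tuple with repeated crossover points to an element of $\C_q$ using the fact that refining the partition cannot increase $r$, and finish with \cref{delISc2}. The only difference is presentational — the paper completes the set of distinct coordinates to $q$ crossover points in one step and cites the refinement property as ``by definition,'' whereas you insert unused crossover points one at a time and justify the key inequality $\min\{a_1,b_1\}+\min\{a_2,b_2\}\le\min\{a_1+a_2,b_1+b_2\}$ explicitly.
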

\begin{proof}
Since there are only finitely many choices for $\bs{w}\in\R_\le^q$ in \Cref{hwhv}, we can choose $\bs{w}'=(c_{i_1},\ldots,c_{i_q})\in\argmin_{\bs{w}} r(\bs{w})$, where $\bs{w}$ ranges over the choices.
Then $r(\bs{w}')\le r(\bs{v})$ for all $\bs{v}\in\R_\le^q$.
Suppose $\bs{w}'\notin\C_q$ and put $A=\{c_{i_1},\ldots,c_{i_q}\}$.
Then $\#A<q$, and there exists $A'=\{c_{j_1},\ldots,c_{j_q}\}$ such that $A\subset A'$ and $1\le j_1<\cdots<j_q\le N$.
Putting $\bs{c}'=(c_{j_1},\ldots,c_{j_q})$, we have $\bs{c}'\in\C_q$ and $r(\bs{c}')\le r(\bs{w}')$ by definition.
Hence $r(\bs{c}')=r(\bs{w}')=\min\,\{r(\bs{v}): \bs{v}\in\R_\le^q\}$.
Furthermore, $r(\bs{c}')>r(\bs{c})$ by \Cref{delISc2}.
\end{proof}

\begin{theorem}\label{hwhv3}
For $q=N+1,N+2,\ldots$, the minimum of $r$ on $\R_\le^q$ is $\rho$.
\end{theorem}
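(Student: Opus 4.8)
The plan is to prove the two inequalities $r(\bs v)\ge\rho$ for every $\bs v\in\R_\le^q$ and $r(\bs v_\star)=\rho$ for one explicit $\bs v_\star\in\R_\le^q$; together these show that $\rho$ is the minimum of $r$ on $\R_\le^q$, attained at $\bs v_\star$. The lower bound in fact holds for all $q\in\N_+$, so the content specific to $q\ge N+1$ lies entirely in the construction of a minimizer, which is possible precisely because there are now at least as many coordinates as crossover points.

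For the lower bound, fix $\bs v=(v_1,\ldots,v_q)\in\R_\le^q$ with $v_0=-\infty$ and $v_{q+1}=\infty$. Since $F_0'=f_0$ and $F_1'=f_1$, each summand in \cref{eq:r} satisfies
\begin{align*}
\min\left\{F_0|_{v_i}^{v_{i+1}},F_1|_{v_i}^{v_{i+1}}\right\}
&=\min\left\{\int_{v_i}^{v_{i+1}}f_0(x)\:\d x,\ \int_{v_i}^{v_{i+1}}f_1(x)\:\d x\right\}\\
&\ge\int_{v_i}^{v_{i+1}}\min\,\{f_0(x),f_1(x)\}\:\d x.
\end{align*}
Summing over $i=0,\ldots,q$ and using additivity of the integral over the adjacent intervals $[v_i,v_{i+1}]$ gives $r(\bs v)\ge\infint\min\,\{f_0(x),f_1(x)\}\:\d x=\rho$ by \cref{eq:rhoOVL}. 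This is precisely the nonnegativity of the termwise differences already exploited in the proof of \cref{lem:vcv}.

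For attainment, I use that $q\ge N+1$ leaves more coordinates than crossover points, so all of $c_1,\ldots,c_N$ can be placed and the remaining coordinates padded with repetitions. Set
\begin{equation*}
\bs v_\star=(c_1,\ldots,c_N,\underbrace{c_N,\ldots,c_N}_{q-N})\in\R_\le^q .
\end{equation*}
For $N\le i\le q-1$ we have $v_i=v_{i+1}=c_N$, so $F_0|_{v_i}^{v_{i+1}}=F_1|_{v_i}^{v_{i+1}}=0$ and these summands vanish; the surviving terms are exactly those defining $r(\bs c)$ for $\bs c=(c_1,\ldots,c_N)$, whence $r(\bs v_\star)=r(\bs c)=\rho$ by \cref{rem:rc_rho}. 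Combined with the lower bound, this yields $\min_{\bs v\in\R_\le^q}r(\bs v)=\rho$.

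There is no substantial obstacle here. The only step requiring a moment's care is that repeated coordinates contribute zero, which is immediate from the convention $g|_x^y=g(y)-g(x)$ evaluated at $x=y$; one should also confirm that the padded tuple is genuinely nondecreasing, which holds since $c_1<\cdots<c_N$ are followed only by copies of $c_N$.
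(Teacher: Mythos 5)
Your proof is correct, and it arrives at the same conclusion by a slightly different and arguably cleaner route. The paper's proof is a dichotomy: if $\{c_1,\ldots,c_N\}\subset\{v_1,\ldots,v_q\}$ then $r(\bs v)=r(\bs c)=\rho$ (a refinement-invariance claim the paper leaves implicit — it relies on $f_0-f_1$ having constant sign between consecutive crossover points), and otherwise some $c_s$ lies strictly inside an interval $(v_i,v_{i+1})$, so $r(\bs v)>\rho$ by \cref{lem:vcv}. You instead prove the non-strict global bound $r(\bs v)\ge\rho$ for every $q$ directly from $\min\bigl\{\int f_0,\int f_1\bigr\}\ge\int\min\{f_0,f_1\}$ — which is exactly the inequality at the core of the proof of \cref{lem:vcv}, just used without the strictness — and then exhibit the explicit minimizer $\bs v_\star=(c_1,\ldots,c_N,c_N,\ldots,c_N)$, where the padding contributes zero because $g|_x^x=0$. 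What your version buys is that it sidesteps the unproved refinement-invariance step entirely (your extra coordinates are literal repeats, so no sign analysis between crossover points is needed) and makes the attainment completely explicit; what it gives up is the strict-inequality information that the paper's route carries along (namely that $\rho$ is attained \emph{only} at tuples containing all the $c_s$), which is not needed for this statement. Both arguments are sound.
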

\begin{proof}
Since $\{c_1,\ldots,c_N\}\subset\{v_1,\ldots,v_q\}$ implies $r(\bs{c})=r(v_1,\ldots,v_q)$, the claim follows by \Cref{rem:rc_rho,lem:vcv}.
\end{proof}

\begin{remark}
For some $q\in\{1,\ldots,N-1\}$, $\bs{v}\in\V_q$ does not necessarily imply $\bs{v}\in\C_q$.
(Note that $\V_N=\C_N=\{\bs{c}\}$ by \Cref{thm:rho_app} and $\C_{N+1}=\C_{N+2}=\cdots=\emptyset$.)
	Here we give an example for the case where $N = 3$, $q = 2$, and $\V_2 \not\subset \C_2$.
Assume that $f_0$ and $f_1$ are defined by
\begin{equation*}
	f_0(x) = \begin{cases}
		\frac{1 - \cos x}{4\pi} & (0 \le x \le 4\pi), \\
		0 & (\text{otherwise}),
	\end{cases}
	\qquad
	f_1(x) = \begin{cases}
		\frac{1 + \cos x}{4\pi} & (\pi \le x \le 5\pi), \\
		0 & (\text{otherwise}).
	\end{cases}
\end{equation*}
	Then $(3\pi/2, 11)$ is in $\V_2$ but not in $\C_2 = \{(3\pi/2, 5\pi/2), (3\pi/2, 7\pi/2), (5\pi/2, \allowbreak 7\pi/2)\}$.
	(Note that $7\pi/2 = 10.995\ldots < 11$.)
\end{remark}

The measurability of $\sup_{\bs{v}\in\R_\le^q}|r_{m,n}(\bs{v})-r(\bs{v})|$ on $\Omega$ can be proved by the same argument as in \Cref{rem:rho_measurable}.

\begin{theorem}\label{thm:suphconv}
For $q\in\N_+$, $\sup_{\bs{v}\in\R_\le^q}|r_{m,n}(\bs{v})-r(\bs{v})|$ converges almost surely to $0$ as $m,n\to\infty$.
\end{theorem}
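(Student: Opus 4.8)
The plan is to dominate the supremum \emph{uniformly in} $\bs{v}$ by a fixed multiple of the two Glivenko--Cantelli quantities $\sup_{x\in\R}|F_{0,m}(x)-F_0(x)|$ and $\sup_{x\in\R}|F_{1,n}(x)-F_1(x)|$, and then to invoke \cref{Fhconv}. The key observation is that $r_{m,n}(\bs{v})-r(\bs{v})$ is a sum of $q+1$ differences of minima, each of which can be controlled termwise by a bound that does not depend on the locations of the cut points $v_1,\ldots,v_q$.

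First I would fix an arbitrary $\bs{v}=(v_1,\ldots,v_q)\in\R_\le^q$ (with $v_0=-\infty$, $v_{q+1}=\infty$) and estimate
\begin{align*}
|r_{m,n}(\bs{v})-r(\bs{v})|
&\le \sum_{i=0}^q\left|\min\{F_{0,m}|_{v_i}^{v_{i+1}},F_{1,n}|_{v_i}^{v_{i+1}}\}-\min\{F_0|_{v_i}^{v_{i+1}},F_1|_{v_i}^{v_{i+1}}\}\right| \\
&\le \sum_{i=0}^q\left(\left|F_{0,m}|_{v_i}^{v_{i+1}}-F_0|_{v_i}^{v_{i+1}}\right|+\left|F_{1,n}|_{v_i}^{v_{i+1}}-F_1|_{v_i}^{v_{i+1}}\right|\right),
\end{align*}
where the first line is the triangle inequality applied to the defining sums \cref{eq:r,eq:r_mn}, and the second applies \cref{xyzw}(b) to each summand. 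Writing $g(x)=F_{0,m}(x)-F_0(x)$, each difference $F_{0,m}|_{v_i}^{v_{i+1}}-F_0|_{v_i}^{v_{i+1}}$ equals $g(v_{i+1})-g(v_i)$, so the $F_0$-part telescopes and is bounded by $|g(v_0)|+|g(v_{q+1})|+2\sum_{i=1}^q|g(v_i)|$. Since $g(-\infty)=g(\infty)=0$ by the boundary conventions of \cref{def:rho_qapp}, the two endpoint terms drop out, leaving a bound of $2q\sup_{x\in\R}|F_{0,m}(x)-F_0(x)|$; the $F_1$-part is handled identically.

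Combining these estimates yields, for \emph{every} $\bs{v}\in\R_\le^q$,
\[
|r_{m,n}(\bs{v})-r(\bs{v})|\le 2q\Bigl(\sup_{x\in\R}|F_{0,m}(x)-F_0(x)|+\sup_{x\in\R}|F_{1,n}(x)-F_1(x)|\Bigr),
\]
so the identical bound holds after taking the supremum over $\bs{v}$ on the left (measurability of the left-hand side follows as in \cref{rem:rho_measurable}, replacing $\R_\le^q$ by a countable subset). By \cref{Fhconv} the right-hand side converges completely, hence almost surely by \cref{rem:ascmp}, to $0$ as $m,n\to\infty$; squeezing the nonnegative left-hand side between $0$ and this bound gives the claim.

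I expect no deep obstacle here: the entire argument is the uniform termwise estimate above, and the reduction to Glivenko--Cantelli is then immediate. The only point demanding a little care is the telescoping bookkeeping at the infinite endpoints $v_0=-\infty$ and $v_{q+1}=\infty$, where one must use that the empirical and true distribution functions coincide (both $0$ at $-\infty$ and both $1$ at $\infty$), so that those boundary terms vanish and the overall constant remains $2q$, independent of the choice of $\bs{v}$.
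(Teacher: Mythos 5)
Your proof is correct and follows essentially the same route as the paper's: triangle inequality over the $q+1$ summands, \cref{xyzw}(b) on each minimum, a uniform bound by the two Glivenko--Cantelli suprema, and then \cref{Fhconv} with \cref{rem:ascmp}. The only difference is your telescoping bookkeeping at the endpoints, which sharpens the constant from the paper's $2(q+1)$ to $2q$ --- immaterial for the conclusion.
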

\begin{proof}
For $\bs{v}\in\R_\le^q$, we have
\begin{align*}
\left|r_{m,n}(\bs{v})-r(\bs{v})\right| 
&\le \sum_{i=0}^{q}\left|\min\left\{F_{0,m}|_{v_i}^{ v_{i+1}},F_{1,n}|_{v_i}^{ v_{i+1}}\right\} - \min\left\{F_{0}|_{v_i}^{ v_{i+1}},F_{1}|_{v_i}^{ v_{i+1}}\right\}\right|  \\
&\le \sum_{i=0}^{q}\left(\Bigl|F_{0,m}|_{v_i}^{ v_{i+1}}-F_{0}|_{v_i}^{ v_{i+1}}\Bigr| + \Bigl|F_{1,n}|_{v_i}^{ v_{i+1}} - F_{1}|_{v_i}^{ v_{i+1}}\Bigr|\right)
\end{align*}
by definition and \Cref{xyzw}.
Since
\begin{align*}
\Bigl|F_{0,m}|_{v_i}^{ v_{i+1}}-F_{0}|_{v_i}^{ v_{i+1}}\Bigr|
&\le  \left|F_{0,m}(v_{i+1})-F_0(v_{i+1})\right| + \left|F_{0,m}(v_{i}) - F_0(v_{i})\right|\\
&\le 2\sup_{x\in\R}\left|F_{0,m}(x) - F_0(x)\right|,\\
\Bigl|F_{1,n}|_{v_i}^{ v_{i+1}}-F_{1}|_{v_i}^{ v_{i+1}}\Bigr|
&\le  \left|F_{1,n}(v_{i+1})-F_1(v_{i+1})\right| + \left|F_{1,n}(v_{i}) - F_1(v_{i})\right|\\
&\le 2\sup_{x\in\R}\left|F_{1,n}(x) - F_1(x)\right|,
\end{align*}
we obtain
\begin{align*}
& \sup_{\bs{v}\in\R_\le^q}\left|r_{m,n}(\bs{v})-r(\bs{v})\right| \\
& \le 2(q+1)\left(\sup_{x\in\R}\left|F_{0,m}(x) - F_0(x)\right| + \sup_{x\in\R}\left|F_{1,n}(x) - F_1(x)\right|\right),
\end{align*}
	whose right side converges almost surely to $0$ as $m,n\to\infty$ by \Cref{rem:ascmp,Fhconv}.
\end{proof}

\begin{definition}
Let $(A, d)$ be a metric space.
We define a discrepancy of $A_1\subset A$ from $A_2\subset A$ by
\begin{equation*}
D(A_1, A_2)=\sup_{a_1\in A_1}\left(\inf_{a_2\in A_2}d(a_1,a_2)\right).
\end{equation*}
If the metric space is $\R^q$ $(q\in\N_+)$ with the Euclidean metric, we write $D_q$ in place of $D$.
\end{definition}

\begin{lemma}\label{supnotK}
Let $(A, d)$ be a metric space.
Let $g$ and $g_{i,j}$ ($i,j\in\N_+$) be real functions on $A$ such that $\min\,\{g(t): t\in A\}$ and $\min\,\{g_{i,j}(t):t\in A\}$ exist.
Put $T=\argmin_{t\in A}\,g(t)$ and $T_{i,j}=\argmin_{t\in A}\,g_{i,j}(t)$.
Suppose $g$ is continuous on $A$, $\sup_{t\in A}|g_{i,j}(t)-g(t)|\to 0$ as $i,j\to\infty$, and there is a compact set $K\subset A$ such that
\begin{equation*}
\min\left\{g(t): t\in A\right\} < \inf\left\{g(t): t\in A\setminus K\right\}.
\end{equation*}
Then $D(T_{i,j},T)\to 0$ as $i,j\to\infty$.
\end{lemma}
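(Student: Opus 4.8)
The plan is to argue by contradiction, using uniform convergence to transfer the near-minimality of points of $T_{i,j}$ back to $g$, and then using the separation hypothesis on $K$ together with continuity of $g$ and compactness of $K$ to trap any offending point near $T$. First I would record the elementary fact that the minimum values converge. Writing $m=\min_{t\in A}g(t)$, $m_{i,j}=\min_{t\in A}g_{i,j}(t)$, and $\epsilon_{i,j}=\sup_{t\in A}|g_{i,j}(t)-g(t)|$, evaluating each function at a minimizer of the other gives $m_{i,j}\le m+\epsilon_{i,j}$ and $m\le m_{i,j}+\epsilon_{i,j}$, hence $|m_{i,j}-m|\le\epsilon_{i,j}\to 0$. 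The crucial consequence is that a minimizer of the perturbed function is an approximate minimizer of $g$: for any $t\in T_{i,j}$,
\[
g(t)\le g_{i,j}(t)+\epsilon_{i,j}=m_{i,j}+\epsilon_{i,j}\le m+2\epsilon_{i,j}.
\]

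Next I would set up the contradiction. Suppose $D(T_{i,j},T)\not\to 0$. Negating the definition of the joint limit $i,j\to\infty$ produces a $\delta>0$ and indices $i_k,j_k\ge k$ (so that $i_k,j_k\to\infty$) with $D(T_{i_k,j_k},T)\ge\delta$; since $D(T_{i_k,j_k},T)=\sup_{t\in T_{i_k,j_k}}\inf_{s\in T}d(t,s)$, for each $k$ there is a point $t_k\in T_{i_k,j_k}$ with $\inf_{s\in T}d(t_k,s)>\delta/2$. The separation hypothesis now does the confining work: setting $\eta=\inf_{t\in A\setminus K}g(t)-m>0$ and $\epsilon_k:=\epsilon_{i_k,j_k}$ (which tends to $0$ because $\min\{i_k,j_k\}\ge k\to\infty$), the key inequality gives $g(t_k)\le m+2\epsilon_k<m+\eta$ for all large $k$, which forces $t_k\in K$.

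Finally I would invoke compactness. Since $K$ is a compact subset of a metric space it is sequentially compact, so $(t_k)$ admits a subsequence $t_{k_l}\to t^*\in K$. Continuity of $g$ together with $g(t_{k_l})\le m+2\epsilon_{k_l}\to m$ yields $g(t^*)\le m$, whence $g(t^*)=m$ and $t^*\in T$ (and $T\ne\emptyset$ because $\min g$ exists). But then $\inf_{s\in T}d(t_{k_l},s)\le d(t_{k_l},t^*)\to 0$, contradicting $\inf_{s\in T}d(t_k,s)>\delta/2$. This contradiction establishes $D(T_{i,j},T)\to 0$.

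The step I expect to be the main obstacle is the confinement of the perturbed minimizers to $K$: without the separation (coercivity-type) hypothesis the points $t_k$ could escape every compact set while $g(t_k)$ still approaches $m$, and then sequential compactness would be unavailable. Everything else is routine bookkeeping around the joint limit $i,j\to\infty$, the only care being to extract indices with both $i_k,j_k\to\infty$ so that the uniform-convergence hypothesis applies along the chosen subsequence.
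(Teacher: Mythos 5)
Your proof is correct, but it takes a different route from the paper: the paper's entire proof of this lemma is a one-line reduction, observing that $\argmin g = \argmax(-g)$ and $\argmin g_{i,j} = \argmax(-g_{i,j})$ and then citing the argmax version of the statement (Lemma A.15 of \cite{johno21}) with $g$ and $g_{i,j}$ replaced by $-g$ and $-g_{i,j}$. You instead give a complete self-contained argument by contradiction: uniform convergence shows minimizers of $g_{i,j}$ are $2\epsilon_{i,j}$-approximate minimizers of $g$; the separation hypothesis confines any sequence of offending points to $K$ once $2\epsilon_{i_k,j_k}$ drops below $\inf_{A\setminus K}g - m$; sequential compactness of $K$ plus continuity of $g$ then forces a subsequential limit lying in $T$, contradicting the assumed lower bound $\delta/2$ on $\inf_{s\in T}d(t_k,s)$. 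Each step checks out, including the careful extraction of indices $i_k,j_k\ge k$ so that the joint-limit hypothesis applies along the subsequence, and the handling of the case $A\setminus K=\emptyset$ is harmless since the infimum is then $+\infty$. What your approach buys is independence from the earlier paper (and it is in effect a proof of the cited external lemma in its min form); what the paper's approach buys is brevity, at the cost of leaving the substantive argument in a reference.
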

\begin{proof}
If $T'=\argmax_{t\in A}\,(-g(t))$ and $T_{i,j}'=\argmax_{t\in A}\,(-g_{i,j}(t))$, then $T'=T$ and $T_{i,j}'=T_{i,j}$, hence $D(T_{i,j},T)=D(T_{i,j}',T')\to 0$ by \cite[Lemma A.15]{johno21} (replace $g$ and $g_{i}$ with $-g$ and $-g_{i,j}$, respectively).
\end{proof}

\begin{lemma}\label{Kexist}
There exists a compact set $K\subset\R_\le^N$ such that
\begin{equation*}
\min\left\{r(\bs{v}):\bs{v}\in\R_\le^N\right\}
< \inf\left\{r(\bs{v}): \bs{v}\in\R_\le^N\setminus K\right\}.
\end{equation*}
\end{lemma}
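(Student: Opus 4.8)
The plan is to invoke \cref{thm:rho_app}, which identifies $\min_{\bs v\in\R_\le^N}r(\bs v)=\rho=r(\bs c)$, and then to show that $r$ stays bounded away from $\rho$ as soon as a coordinate of $\bs v$ escapes to $\pm\infty$. The decisive structural observation is that leaving a bounded region of $\R_\le^N$ can only be achieved by sending $v_1\to-\infty$ or $v_N\to+\infty$: collisions of coordinates keep $\bs v$ inside the closed set $\R_\le^N$ and so cause no loss of compactness. In either escaping limit the value $r(\bs v)$ is controlled by the value of $r$ on $\R_\le^{N-1}$, whose infimum strictly exceeds $\rho$.

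Concretely, I would first set $\epsilon=\inf\{r(\bs v):\bs v\in\R_\le^{N-1}\}-\rho$, which is strictly positive: for $N\ge 2$ this is \cref{hwhv2} (whose ``furthermore'' part rests on \cref{delISc2}), while for $N=1$ one has $\inf_{\R_\le^0}r=\min\{1,1\}=1>\rho$, since $N\ge1$ forces $f_0\ne f_1$ and hence $\rho<1$. Because $F_0(x),F_1(x)\to0$ as $x\to-\infty$ and $F_0(x),F_1(x)\to1$ as $x\to+\infty$, I can then fix $M>0$ large enough that
\[
\max\{F_0(-M),F_1(-M)\}<\frac{\epsilon}{2},\qquad \max\{1-F_0(M),\,1-F_1(M)\}<\frac{\epsilon}{2},
\]
and take $K=\{\bs v=(v_1,\ldots,v_N)\in\R_\le^N:-M\le v_1\text{ and }v_N\le M\}$. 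This $K$ is closed and bounded in $\R^N$, hence compact.

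Next I would estimate $r$ on $\R_\le^N\setminus K$, where $v_1<-M$ or $v_N>M$. Suppose $v_1<-M$ and put $\bs u=(v_2,\ldots,v_N)\in\R_\le^{N-1}$. Dropping the nonnegative term $\min\{F_0(v_1),F_1(v_1)\}$ coming from the interval $(-\infty,v_1)$ and applying the elementary bound $\min\{a-s,b-t\}\ge\min\{a,b\}-\max\{s,t\}$ to the interval $(v_1,v_2)$, I get $r(\bs v)\ge r(\bs u)-\max\{F_0(v_1),F_1(v_1)\}\ge\inf_{\R_\le^{N-1}}r-\epsilon/2=\rho+\epsilon/2$, using monotonicity $\max\{F_0(v_1),F_1(v_1)\}\le\max\{F_0(-M),F_1(-M)\}$. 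The case $v_N>M$ is symmetric: drop $v_N$, keep $\bs u'=(v_1,\ldots,v_{N-1})$, and use $1-F_j(v_N)\le1-F_j(M)<\epsilon/2$. In both cases $r(\bs v)\ge\rho+\epsilon/2$, so $\inf\{r(\bs v):\bs v\in\R_\le^N\setminus K\}\ge\rho+\epsilon/2>\rho=\min_{\R_\le^N}r$, which is the asserted inequality.

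The main obstacle is making the informal slogan ``a coordinate escaping to infinity reduces to one fewer breakpoint'' both precise and \emph{uniform}: one must recognize that coordinate collisions do not threaten compactness (so only the two extreme coordinates need to be bounded), and then upgrade the pointwise limit $r(\bs v)\to r(\bs u)$ into a lower bound valid for \emph{all} positions of the remaining coordinates. This uniformity is exactly what the $\min/\max$ inequality above supplies; the rest is bookkeeping built on \cref{thm:rho_app,hwhv2}.
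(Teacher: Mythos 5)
Your proof is correct and follows essentially the same route as the paper's: choose a box $K$ using the tails of $F_0$ and $F_1$, and bound $r$ outside $K$ from below by comparing with the infimum of $r$ over $\R_\le^{N-1}$, which strictly exceeds $\rho$ by \cref{hwhv2}. The only differences are cosmetic — you use a one-sided $\min/\max$ estimate where the paper uses the two-sided bound from \cref{xyzw} with constant $3\epsilon$, and you explicitly dispose of the degenerate case $N=1$, which the paper leaves implicit.
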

\begin{proof}
By \Cref{thm:rho_app,hwhv2}, there exist 
\begin{equation*}
M_q=\min\left\{r(\bs{v}): \bs{v}\in\R_\le^q\right\}\qquad (q=1\ldots,N)
\end{equation*}
and $M=\min\,\{M_1,\ldots,M_{N-1}\}>M_N$.
Choose $\epsilon>0$ with $\epsilon<(M-M_N)/3$.
We can take $\alpha,\beta\in\R$ with $\alpha<\beta$ such that $F_j(\alpha)<\epsilon$ and $1-F_j(\beta)<\epsilon$ ($j=0, 1$), since $F_j$ are nondecreasing functions with $\lim_{x\to -\infty}F_j(x)=0$ and $\lim_{x\to \infty}F_j(x)=1$.
Let $K=[\alpha, \beta]^N\cap\R_\le^N$ and $\bs{v}=(v_1,\ldots,v_N)\in\R_\le^N\setminus K$.
Then $v_1<\alpha$ or $v_N>\beta$ holds.

Suppose $v_1<\alpha$ and put $\bs{v}'=(v_2,\dots,v_N)$. 
Using \Cref{xyzw}, we obtain
\begin{align*}
\left|r(\bs{v})-r(\bs{v}') \right|
&= \left|\min_{j}\, F_j|_{-\infty}^{v_1} + \min_{j}\, F_j|_{v_1}^{v_2} - \min_{j}\, F_j|_{-\infty}^{v_2} \right| \\
&\le \left|\min_{j}\, F_j|_{-\infty}^{v_1}\right| + \left|\min_{j}\, F_j|_{v_1}^{v_2} - \min_{j}\, F_j|_{-\infty}^{v_2} \right| \\
	&<\epsilon + \Bigl| F_0|_{v_1}^{v_2} - F_0|_{-\infty}^{v_2} \Bigr| + \Bigl| F_1|_{v_1}^{v_2} -  F_1|_{-\infty}^{v_2} \Bigr| \\
	&=\epsilon + \Bigl| F_0|_{-\infty}^{v_1} \Bigr| + \Bigl| F_1|_{-\infty}^{v_1} \Bigr| \\
&< 3\epsilon.
\end{align*}
Hence $M\le r(\bs{v}')\le |r(\bs{v}')-r(\bs{v})| + r(\bs{v}) < 3\epsilon + r(\bs{v})$.
We can similarly prove that $M< 3\epsilon + r(\bs{v})$ for the case $v_N>\beta$.

Therefore $M < 3\epsilon + r(\bs{v})$ for all $\bs{v}\in\R_\le^N\setminus K$, so that $$M_N<M-3\epsilon\le \inf\,\{r(\bs{v}): \bs{v}\in\R_\le^N\setminus K\}$$ holds.
This is the claim.
\end{proof}

The measurability of $D_N(\V_{N,m,n}, \V_N)$ will be proved at the end of this section.

\begin{theorem}\label{DVn}
As $m,n\to\infty$, $D_N(\V_{N,m,n}, \V_N)$ converges almost surely to $0$.
\end{theorem}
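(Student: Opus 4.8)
The plan is to recognize that \cref{DVn} is a direct instance of the deterministic stability result \cref{supnotK}, specialized to the metric space $A=\R_\le^N$ (with the Euclidean metric, so $D=D_N$), the limiting function $g=r$, and the doubly-indexed family $g_{m,n}=r_{m,n}$, and then to promote its conclusion to an almost sure statement by feeding in the almost sure uniform convergence from \cref{thm:suphconv}. The essential observation is that \cref{thm:rho_app} identifies $T=\V_N=\{\bs c\}$, so that $D_N(\V_{N,m,n},\V_N)=\sup_{\bs v\in\V_{N,m,n}}d(\bs v,\bs c)$ is exactly the quantity \cref{supnotK} controls.

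First I would verify the deterministic hypotheses of \cref{supnotK}. The existence of $\min r$ on $\R_\le^N$ (attained uniquely at $\bs c$) is \cref{thm:rho_app}, and the existence of $\min r_{m,n}=\rho_{N,m,n}$ together with the nonemptiness of $\V_{N,m,n}$ is recorded in \cref{def:rho_qapp2,VmCm} (recall $r_{m,n}$ takes only finitely many values). Continuity of $r$ on $\R_\le^N$ follows because $F_0$ and $F_1$ are continuous and the pointwise minimum of continuous functions is continuous, so each summand $\min\{F_0|_{v_i}^{v_{i+1}},F_1|_{v_i}^{v_{i+1}}\}$ depends continuously on the finite coordinates $v_1,\ldots,v_N$. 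The required compact set $K\subset\R_\le^N$ with $\min r<\inf_{\R_\le^N\setminus K}r$ is exactly \cref{Kexist}. The one hypothesis that is not deterministic, namely $\sup_{\bs v\in\R_\le^N}|r_{m,n}(\bs v)-r(\bs v)|\to 0$, is supplied on a probability-one event by \cref{thm:suphconv}.

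Next I would carry out the transfer to almost sure convergence. Let $\Omega_0$ be the event, of probability one by \cref{thm:suphconv}, on which $\sup_{\bs v\in\R_\le^N}|r_{m,n}(\bs v)-r(\bs v)|\to 0$ as $m,n\to\infty$. Fixing any $\omega\in\Omega_0$, all hypotheses of \cref{supnotK} hold for the deterministic functions $\bs v\mapsto r_{m,n}(\bs v)(\omega)$ and $\bs v\mapsto r(\bs v)$, so the lemma gives $D_N(\V_{N,m,n}(\omega),\V_N)\to 0$. As this holds for every $\omega\in\Omega_0$ and $P(\Omega_0)=1$, the almost sure convergence follows.

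The main obstacle is bookkeeping rather than analytic depth: one must check that \cref{thm:suphconv} delivers the \emph{uniform} convergence hypothesis of \cref{supnotK} on a single full-measure event $\Omega_0$ independent of $\bs v$, so that the \emph{deterministic} lemma can be applied pointwise in $\omega$. A minor point to address is the measurability of $\omega\mapsto D_N(\V_{N,m,n}(\omega),\V_N)$, which can be treated by the same countable-subset argument as in \cref{rem:rho_measurable}; alternatively, it suffices to note that the convergence event contains the measurable probability-one event $\Omega_0$.
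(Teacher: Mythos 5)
Your proposal is correct and follows essentially the same route as the paper: both apply \cref{supnotK} with $A=\R_\le^N$, $g=r$, $g_{i,j}=r_{i,j}$, invoking \cref{VmCm,thm:suphconv,Kexist} to verify its hypotheses on a probability-one event. Your write-up merely makes explicit the pointwise-in-$\omega$ transfer and the measurability remark that the paper handles separately after the theorem.
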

\begin{proof}
In \Cref{supnotK}, let $(A, d)$ be the subspace $\R_\le^N$ of the Euclidean metric space $\R^N$, $g=r$ (which is continuous on $\R_\le^N$), and $g_{i,j}=r_{i,j}$.
Then by \Cref{VmCm,thm:suphconv,Kexist}, the assumptions in \Cref{supnotK} are satisfied almost surely, hence $D_N(\V_{N,m,n}, \V_N)$ converges almost surely to $0$ as $m,n\to\infty$.
\end{proof}

\begin{corollary}\label{v_conv_app}
As $m,n\to\infty$, $\bs{v}_{m,n}\in\V_{N,m,n}$ converges almost surely to $\bs{c}$.
\end{corollary}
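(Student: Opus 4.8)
The plan is to read this off directly from \cref{DVn}, the crux being that the limiting $\argmin$ set is a single point. First I would invoke \cref{thm:rho_app}, which asserts that $r$ attains its minimum on $\R_\le^N$ \emph{uniquely} at $\bs{c}$; this means $\V_N=\{\bs{c}\}$. This uniqueness is the structural fact that makes everything work: when the target set is a singleton, the (one-sided) discrepancy collapses into an honest uniform distance.

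Next I would unwind the definition of the discrepancy with the second argument equal to $\V_N=\{\bs{c}\}$. For any nonempty $A\subset\R_\le^N$ one has
\[
	D_N(A,\{\bs{c}\})=\sup_{\bs{v}\in A}\ \inf_{\bs{u}\in\{\bs{c}\}}\|\bs{v}-\bs{u}\|=\sup_{\bs{v}\in A}\|\bs{v}-\bs{c}\|,
\]
where $\|\cdot\|$ denotes the Euclidean norm on $\R^N$. Applying this with $A=\V_{N,m,n}$ (which is nonempty, as recorded in \cref{VmCm}, so that a selection $\bs{v}_{m,n}$ exists) yields the pointwise-in-$\omega$ bound
\[
	\|\bs{v}_{m,n}-\bs{c}\|\le\sup_{\bs{v}\in\V_{N,m,n}}\|\bs{v}-\bs{c}\|=D_N(\V_{N,m,n},\V_N),
\]
valid for every $(m,n)$.

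Finally, since \cref{DVn} states that $D_N(\V_{N,m,n},\V_N)$ converges almost surely to $0$ as $m,n\to\infty$, the displayed inequality squeezes $\|\bs{v}_{m,n}-\bs{c}\|\to0$ almost surely, which is exactly the claim $\bs{v}_{m,n}\to\bs{c}$ (in the double-index sense of \cref{def:ascmp}). I do not expect any genuine analytic obstacle: the substantive work is already packaged into \cref{thm:rho_app,DVn}. The one point deserving care is that $\bs{v}_{m,n}$ is an \emph{arbitrary} selection from $\V_{N,m,n}$, so the argument must bound \emph{every} element of $\V_{N,m,n}$ at once; this is precisely why the uniqueness in \cref{thm:rho_app} is indispensable, as it converts the one-sided discrepancy into a uniform control of $\|\bs{v}-\bs{c}\|$ over all $\bs{v}\in\V_{N,m,n}$.
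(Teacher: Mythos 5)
Your argument is correct and coincides with the paper's own proof: both use the uniqueness from \cref{thm:rho_app} to identify $\V_N=\{\bs{c}\}$, so that $D_N(\V_{N,m,n},\V_N)=\sup_{\bs{v}\in\V_{N,m,n}}d(\bs{v},\bs{c})\ge d(\bs{v}_{m,n},\bs{c})$, and then conclude via \cref{DVn}. No changes are needed.
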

\begin{proof}
Since $\V_N=\{\bs{c}\}$ by \Cref{thm:rho_app}, we have  $$D_N(\V_{N,m,n}, \V_N) = \sup_{\bs{v}\in\V_{N,m,n}}d(\bs{v},\bs{c}) \ge d(\bs{v}_{m,n},\bs{c}).$$
Hence the claim follows from \Cref{DVn}.
\end{proof}

We cannot guarantee that $\bs{v}_{m,n}$ is necessarily measurable.
We mean by ``$\bs{v}_{m,n}$ converges almost surely to $\bs{c}$'' that there exists a measurable set $A\subset\{\omega\in\Omega :\lim_{m,n\to\infty}\bs{v}_{m,n}=\bs{c}\}$ with $P(A)=1$.
In fact, we can take $A=\{\omega\in\Omega :\lim_{m,n\to\infty}D_N(\V_{N,m,n}, \V_N)=0\}$.
If $(\Omega, \A, P)$ is complete, we have $P(\{\omega \in \Omega : \lim_{m, n \to \infty} \bs{v}_{m, n} = \bs{c}\}) = 1$.

\begin{theorem}\label{thm:rhoh2}
As $m,n\to\infty$, $\rho_{N,m,n}$ converges almost surely to $\rho$.
\end{theorem}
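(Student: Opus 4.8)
The plan is to reduce the assertion to the uniform approximation already proved in \cref{thm:suphconv}, using only the variational descriptions of the two quantities involved. By \cref{eq:rho_qmn} we have $\rho_{N,m,n}=\min_{\bs v\in\R_\le^N}r_{m,n}(\bs v)$, and by \cref{thm:rho_app} we have $\rho=\min_{\bs v\in\R_\le^N}r(\bs v)$; both minima exist. The elementary fact I would invoke is that the minimum is $1$-Lipschitz for the supremum norm: if $f$ and $g$ are real functions on a common set whose minima exist, then
\[
\left|\min_{\bs v}f(\bs v)-\min_{\bs v}g(\bs v)\right|\le\sup_{\bs v}\left|f(\bs v)-g(\bs v)\right|,
\]
because $f(\bs v)\le g(\bs v)+\sup_{\bs w}|f(\bs w)-g(\bs w)|$ for every $\bs v$ forces $\min f\le\min g+\sup|f-g|$, and the reverse inequality follows by symmetry.

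First I would apply this with $f=r_{m,n}$ and $g=r$ on $\R_\le^N$, obtaining the pointwise bound
\[
\left|\rho_{N,m,n}-\rho\right|\le\sup_{\bs v\in\R_\le^N}\left|r_{m,n}(\bs v)-r(\bs v)\right|
\]
on all of $\Omega$. By \cref{thm:suphconv} the right-hand side converges almost surely to $0$ as $m,n\to\infty$, and therefore so does the left-hand side.

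To close the almost-sure statement rigorously I would note that $\rho_{N,m,n}$ is measurable by \cref{rem:rho_measurable}, so the set $\{\omega\in\Omega:\lim_{m,n\to\infty}\rho_{N,m,n}(\omega)=\rho\}$ is measurable; since it contains the probability-$1$ event on which $\sup_{\bs v}|r_{m,n}(\bs v)-r(\bs v)|\to0$, it has probability $1$, which is exactly the claim.

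I do not expect a genuine obstacle here: the single point needing care is the almost-sure/measurability bookkeeping, and the route above avoids it entirely by never evaluating $r$ at the (possibly non-measurable) minimizers $\bs v_{m,n}$ discussed after \cref{v_conv_app}. Should one prefer to use the minimizer-convergence machinery instead, the alternative is to fix $\bs v_{m,n}\in\V_{N,m,n}$ and split
\[
\left|\rho_{N,m,n}-\rho\right|\le\left|r_{m,n}(\bs v_{m,n})-r(\bs v_{m,n})\right|+\left|r(\bs v_{m,n})-r(\bs c)\right|,
\]
bounding the first summand by $\sup_{\bs v}|r_{m,n}(\bs v)-r(\bs v)|\to0$ and the second by combining $\bs v_{m,n}\to\bs c$ almost surely (\cref{v_conv_app}) with the continuity of $r$ and $r(\bs c)=\rho$ (\cref{rem:rc_rho}); in that variant the measurability discussion already recorded after \cref{v_conv_app} is exactly what must be reproduced.
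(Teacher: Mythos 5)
Your proof is correct, but it takes a genuinely different and shorter route than the paper's. You exploit the $1$-Lipschitz property of the minimum under the supremum norm, so that \cref{thm:rho_app} (identifying $\rho=\min_{\bs v\in\R_\le^N}r(\bs v)$) together with \cref{thm:suphconv} immediately yields $|\rho_{N,m,n}-\rho|\le\sup_{\bs v\in\R_\le^N}|r_{m,n}(\bs v)-r(\bs v)|\to0$ almost surely; the measurability bookkeeping is then handled by \cref{rem:rho_measurable} and the measurability of the supremum noted after \cref{thm:suphconv}. The paper instead evaluates $r_{m,n}$ at a minimizer $\bs v_{m,n}\in\V_{N,m,n}$ and bounds $|r_{m,n}(\bs v_{m,n})-r(\bs c)|$ term by term, which requires the whole minimizer-convergence apparatus (\cref{supnotK,Kexist,DVn,v_conv_app}) plus the continuity of $F_0,F_1$, and forces the authors to confront the possible non-measurability of $\bs v_{m,n}$. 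Your argument sidesteps all of that and needs only the uniform convergence of $r_{m,n}$ to $r$ and the identification of $\rho$ as the minimum of $r$; what the paper's longer route buys is the additional conclusion of \cref{v_conv_app} that the empirical optimal cut points converge to the crossover points $\bs c$, which is of independent interest but is not needed for the convergence of $\rho_{N,m,n}$ itself. One small point worth making explicit in your write-up: the Lipschitz inequality for minima requires both minima to exist (or should be stated for infima), which holds here since $r_{m,n}$ takes finitely many values and $r$ attains its minimum by \cref{thm:rho_app}.
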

\begin{proof}
Let $\bs{v}_{m,n}=(v_1,\ldots,v_N)\in\V_{N,m,n}$, $v_0=-\infty$, and $v_{N+1}=\infty$.
By \Cref{def:rho_qmn,xyzw,thm:rho_app}, we have 
\begin{align*}
\left|\rho_{N,m,n}-\rho\right|
&= \left|r_{m,n}(\bs{v}_{m,n})-r(\bs{c})\right|\\
&\le \sum_{i=0}^N \left(\Bigl|F_{0,m}|_{v_i}^{ v_{i+1}}-F_{0}|_{c_i}^{ c_{i+1}}\Bigr| + \Bigl|F_{1,n}|_{v_i}^{ v_{i+1}} - F_{1}|_{c_i}^{ c_{i+1}}\Bigr|\right),
\end{align*}
where
\begin{align*}
\Bigl|F_{0,m}|_{v_i}^{ v_{i+1}}-F_{0}|_{c_i}^{ c_{i+1}}\Bigr|
&= \left|F_{0,m}(v_{i+1})-F_{0,m}(v_{i}) - F_0(c_{i+1})+F_0(c_{i}) \right|\\
&\le \left|F_{0,m}(v_{i+1}) - F_{0}(v_{i+1})\right|
+ \left|F_{0}(v_{i+1}) - F_0(c_{i+1})\right|\\
&\qquad+ \left|F_{0,m}(v_{i}) - F_{0}(v_{i})\right|
+ \left|F_{0}(v_{i}) - F_0(c_{i})\right|
\end{align*}
and 
\begin{align*}
\Bigl|F_{1,n}|_{v_i}^{ v_{i+1}}-F_{1}|_{c_i}^{ c_{i+1}}\Bigr|
&= \left|F_{1,n}(v_{i+1})-F_{1,n}(v_{i}) - F_1(c_{i+1})+F_1(c_{i}) \right|\\
&\le \left|F_{1,n}(v_{i+1}) - F_{1}(v_{i+1})\right|
+ \left|F_{1}(v_{i+1}) - F_1(c_{i+1})\right|\\
&\qquad+ \left|F_{1,n}(v_{i}) - F_{1}(v_{i})\right|
+ \left|F_{1}(v_{i}) - F_1(c_{i})\right|.
\end{align*}
Now recall that we are considering the probability space $(\Omega,\A,P)$.
By \Cref{rem:ascmp,Fhconv}, there exists $A_1\in\A$ with $P(A_1)=1$ such that for each $\omega\in A_1$, $$\sup_{x\in\R}|F_{0,m}(x)-F_0(x)|\to 0,\qquad\sup_{x\in\R}|F_{1,n}(x)-F_1(x)|\to 0$$ as $m,n\to\infty$.
Since $F_0$ and $F_1$ are continuous on $\R$ and $\bs{v}_{m,n}=(v_1,\ldots,v_N)\allowbreak\to\bs{c}$ almost surely as $m,n\to\infty$ by \Cref{v_conv_app}, there exists $A_2\in\A$ with $P(A_2)=1$ such that for each $\omega\in A_2$, $$|F_k(v_i) - F_k(c_i)|\to 0\qquad (k=0,1;\ i=1,\ldots,N)$$ as $m,n\to\infty$.
Put $A=A_1\cap A_2$.
Then $P(A)=1$.
For each $\omega\in A$ and for any $\epsilon>0$, there exist integers $N_1(\omega),N_2(\omega)$ such that $m,n\ge N_1(\omega)$ implies
$$\sup_{x\in\R}|F_{0,m}(x)-F_0(x)|<\epsilon,\qquad\sup_{x\in\R}|F_{1,n}(x)-F_1(x)|<\epsilon$$
and $m,n\ge N_2(\omega)$ implies
$$|F_k(v_i) - F_k(c_i)|<\epsilon\qquad (k=0,1;\ i=1,\ldots,N),$$
hence $m,n\ge N(\omega)=\max\,\{N_1(\omega),N_2(\omega)\}$ implies
\begin{equation*}
\Bigl|F_{0,m}|_{v_i}^{ v_{i+1}}-F_{0}|_{c_i}^{ c_{i+1}}\Bigr|
+
\Bigl|F_{1,n}|_{v_i}^{ v_{i+1}}-F_{1}|_{c_i}^{ c_{i+1}}\Bigr|
<8\epsilon
\end{equation*}
for $i=0,\ldots,N$, and therefore
\begin{equation*}
\left|\rho_{N,m,n}(\omega)-\rho\right| < \sum_{i=0}^N 8\epsilon = 8(N+1)\epsilon.
\end{equation*}
Since $\epsilon$ was arbitrary, $\rho_{N,m,n}\to\rho$ almost surely as $m,n\to\infty$.
\end{proof}

Note that \Cref{thm:rhoh2} is exactly \Cref{thm:rhoh}.

Hereafter, we prove that $D_N(\V_{N,m,n}, \V_N)$ is measurable on $\Omega$ (related to \Cref{DVn}).

\begin{definition}
Let $(Z_1,\gamma_1),\ldots,(Z_{m+n},\gamma_{m+n})$ be the rearrangement of $(X_1,0),\ldots,(X_m,0),(Y_1,1),\ldots,(Y_n,1)$ with $Z_1\le\cdots\le Z_{m+n}$,
$\T$ be the set of tuples $(t_1,\ldots,t_l)$ of positive integers with $t_1+\cdots+t_l=m+n$,
	$\R_{(t_1,\ldots,t_l)}^{m+n}$ the set of real $m+n$-tuples $(v_1,\ldots,v_{m+n})$ with $v_1=\cdots=v_{t_1}<v_{t_1+1}=\cdots =v_{t_1+t_2}<\cdots <v_{t_1+\cdots+t_{l-1}+1}=\cdots=v_{m+n}$, 
and $\S_{(t_1,\ldots,t_l)}=(\{0,1\}^{t_1}/\sim)\times\cdots\times(\{0,1\}^{t_l}/\sim)$, where $\{0,1\}^{t}/\sim$ denotes the $t$-th symmetric product of $\{0,1\}$.
For $\bs{t}\in \T$ and $\bs{s}\in \S_{\bs t}$, let $\Omega_{\bs t}=\{\omega\in\Omega:(Z_1,\ldots,Z_{m+n})\in\R_{\bs t}^{m+n}\}$, $\Omega_{\bs{t},\bs{s}}=\{\omega\in\Omega_{\bs t}:(\gamma_1,\ldots,\gamma_{m+n})\text{ corresponds to }\bs{s}\}$.
Put $I_0 = (-\infty,Z_1)$ and $I_i = [Z_i, Z_{i+1})$ for $i=1,\ldots,m+n$ where $Z_{m+n+1}=\infty$.
Denote by $\J$ the set of $N$-tuples $(j_1,\ldots,j_N)$ of integers with $0\le j_1\le\cdots\le j_N\le m+n$.
For $(j_1\ldots,j_N)\in \J$, define $I_{(j_1,\ldots,j_N)} = (I_{j_1}\times\cdots\times I_{j_N})\cap\R_\le^N$.
\end{definition}

\begin{remark}\label{rem:D_Nmeas}
Since $\R_{\bs{t}}^{m+n}$ are measurable and pairwise disjoint for $\bs{t}\in \T$, so are $\Omega_{\bs t}$.
In addition, $\R_\le^{m+n}=\bigcup_{\bs{t}\in \T}\R_{\bs{t}}^{m+n}$ implies that $\Omega = \bigcup_{\bs{t}\in \T}\Omega_{\bs t}$, where $\Omega_{\bs t}$ equals the disjoint union of $\Omega_{\bs{t},\bs{s}}\in\A$ over $\bs{s}\in \S_{\bs t}$.
Besides, for any $\bs{t}\in\T$, there exists a nonempty set $\J_{\bs t}\subset \J$ such that, on the event $\Omega_{\bs t}$, $\R_\le^N$ equals the disjoint union of $I_{\bs j}$ ($\ne\emptyset$) over $\bs{j}\in \J_{\bs t}$.

	For $\bs{t}\in\T$ and $\bs{s}\in\S_{\bs t}$, consider the event $\Omega_{\bs{t},\bs{s}}$ in the case $\Omega_{\bs{t},\bs{s}} \ne \emptyset$.
Then $r_{m,n}$ is constant on $I_{\bs j}$ for any $\bs{j}\in\J_{\bs t}$, since it depends only on the rank statistics of $X_1,\ldots,X_m,Y_1,\ldots,Y_n$.
Furthermore, there exists a nonempty set $\J_{\bs{t},\bs{s}}\subset \J_{\bs t}$ such that $\V_{N,m,n}=\argmin_{\bs{v}\in\R_\le^N}\,r_{m,n}(\bs{v})$ equals the disjoint union of $I_{\bs j}$ ($\ne\emptyset$) over $\bs{j}\in \J_{\bs{t},\bs{s}}$.
\end{remark}

\begin{theorem}
$D_N(\V_{N,m,n}, \V_N)$ is measurable on $\Omega$.
\end{theorem}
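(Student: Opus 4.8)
The plan is to reduce the claim to a purely deterministic measurability question on each piece of the finite measurable partition recorded in \cref{rem:D_Nmeas}. Since $m+n$ is fixed, the index sets $\T$ and $\S_{\bs t}$ are finite, so $\Omega=\bigsqcup_{\bs t\in\T}\bigsqcup_{\bs s\in\S_{\bs t}}\Omega_{\bs t,\bs s}$ is a partition into finitely many sets $\Omega_{\bs t,\bs s}\in\A$. On each piece the argmin set is explicit: by \cref{rem:D_Nmeas}, $\V_{N,m,n}$ restricted to $\Omega_{\bs t,\bs s}$ is the disjoint union of the sets $I_{\bs j}$ over the finite index set $\J_{\bs t,\bs s}$, while $\V_N=\{\bs c\}$ by \cref{thm:rho_app}. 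Because $\inf_{\bs c'\in\V_N}d(\bs v,\bs c')=\|\bs v-\bs c\|$, on $\Omega_{\bs t,\bs s}$ we have
\begin{equation*}
D_N(\V_{N,m,n},\V_N)=\sup_{\bs v\in\V_{N,m,n}}\|\bs v-\bs c\|=\max_{\bs j\in\J_{\bs t,\bs s}}\ \sup_{\bs v\in I_{\bs j}}\|\bs v-\bs c\|.
\end{equation*}
Since a finite maximum of measurable functions is measurable and $\A$-measurability of a function can be checked on each member of a finite measurable partition separately, it suffices to show that for each fixed $\bs t\in\T$ and each $\bs j\in\J_{\bs t}$ the map $\omega\mapsto\sup_{\bs v\in I_{\bs j}(\omega)}\|\bs v-\bs c\|$ is measurable on $\Omega_{\bs t}$ as a function into $[0,\infty]$.

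For this key step I would exploit the fact that, on $\Omega_{\bs t}$, the set $I_{\bs j}=(I_{j_1}\times\cdots\times I_{j_N})\cap\R_\le^N$ is determined by the order statistics $Z_1\le\cdots\le Z_{m+n}$: the $k$-th factor $I_{j_k}$ has lower endpoint $-\infty$ (if $j_k=0$) or $Z_{j_k}$, and upper endpoint $Z_{j_k+1}$ (equal to $\infty$ if $j_k=m+n$), and these endpoints are measurable in $\omega$. Writing $\phi(\omega)=\sup_{\bs v\in I_{\bs j}(\omega)}\|\bs v-\bs c\|$, I would express $\phi=G\bigl(a_1,b_1,\ldots,a_N,b_N\bigr)$, where $(a_k,b_k)$ are the endpoints of $I_{j_k}$ and $G\colon[-\infty,\infty]^{2N}\to[0,\infty]$ is the fixed map sending an endpoint tuple to the supremum of $\|\cdot-\bs c\|$ over $\{\bs v:a_k\le v_k\le b_k\ \forall k,\ v_1\le\cdots\le v_N\}$. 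As $\phi$ is then the composition of the measurable map $\omega\mapsto(a_k(\omega),b_k(\omega))_k$ with $G$, it is enough that $G$ be Borel, and $G$ is continuous wherever the endpoints are finite and the region is nonempty (the constraint region being a continuous compact-valued correspondence of its endpoints, so the maximum theorem applies) while $G\equiv+\infty$ on the closed set where some $a_k=-\infty$ or $b_k=+\infty$. Concretely, since $\|\cdot-\bs c\|$ is convex and continuous, $\phi$ equals the supremum over the closure $\overline{I_{\bs j}}$ and hence the maximum of $\|\cdot-\bs c\|$ over the finitely many vertices of the convex polytope $\overline{I_{\bs j}}$, each vertex having coordinates that are order-constrained selections of the $Z_i$, which are measurable in $\omega$.

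The main obstacle is this key step, and within it the two complications introduced by the constraint $\R_\le^N$ and by the half-open and unbounded intervals $I_{j_k}$. The intersection with $\R_\le^N$ couples the coordinates: when consecutive indices coincide ($j_k=j_{k+1}$) the naive identity ``supremum over a product of intervals equals the maximum over its corners'' fails, so one is forced to pass to the closed convex polytope $\overline{I_{\bs j}}$ and invoke attainment of a convex supremum at a vertex, then verify that the vertex coordinates depend measurably (indeed continuously) on the $Z_i$. One must also treat degenerate $I_{\bs j}$ and the unbounded case (where $\phi\equiv+\infty$) uniformly within the $[0,\infty]$-valued framework; fixing $\bs t$ \emph{first}, so that the tie pattern and hence which indices yield $\pm\infty$ endpoints is constant on $\Omega_{\bs t}$, is precisely what makes these case distinctions clean and the composition $\phi=G\circ(\,\cdot\,)$ measurable.
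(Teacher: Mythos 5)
Your proposal is correct and follows essentially the same route as the paper's proof: partition $\Omega$ into the finitely many measurable pieces $\Omega_{\bs t,\bs s}$, write $D_N(\V_{N,m,n},\V_N)$ there as a finite maximum of $D_N(I_{\bs j},\{\bs c\})$, treat the unbounded case as $+\infty$, and reduce the bounded case to the maximum of $d(\cdot,\bs c)$ over the (measurably varying) vertices of the closed convex polytope $\overline{I_{\bs j}}$. The extra machinery you invoke (the Borel map $G$ and the maximum theorem) is not needed, since the vertex reduction you also give is exactly the paper's argument.
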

\begin{proof}
Let $\bs{t}\in\T$ and $\bs{s}\in\S_{\bs t}$.
Since $\Omega$ equals the disjoint union of $\Omega_{\bs{t},\bs{s}}\in\A$ over $\bs{t}\in \T$ and $\bs{s}\in \S_{\bs t}$ by \Cref{rem:D_Nmeas}, it suffices to prove the measurability of $D_N(\V_{N,m,n}, \V_N)$ on each $\Omega_{\bs{t},\bs{s}}$.
In the rest of the proof, we restrict $D_N(\V_{N,m,n}, \V_N)$ to $\Omega_{\bs{t},\bs{s}}$, which gives $D_N(\V_{N,m,n}, \V_N)=\max_{\bs{j}\in\J_{\bs{t},\bs{s}}} D_N(I_{\bs{j}}, \allowbreak\{\bs{c}\})$.

If there exists $\bs{j}=(j_1,\ldots,j_N)\in\J_{\bs{t},\bs{s}}$ such that $j_1=0$ or $j_N=m+n$, then $I_{\bs{j}} = ((-\infty, Z_1)\times I_{j_2}\times\cdots\times I_{j_N})\cap\R_\le^N$ or $I_{\bs{j}} = (I_{j_1}\times\cdots\times I_{j_{N-1}}\times[Z_{m+n},\infty))\cap\R_\le^N$, so that $D_N(\V_{N,m,n}, \V_N)=\infty$ (the measurability is obvious).

Next, let $\bs{j}=(j_1,\ldots,j_N)\in\J_{\bs{t},\bs{s}}$ with $1\le j_1\le\cdots\le j_N\le m+n-1$.
Then the closure $\overline{I_{\bs j}}$ of $I_{\bs{j}}$ equals $([Z_{j_1},Z_{j_1+1}]\times\cdots\times[Z_{j_N},Z_{j_N+1}])\cap\R_\le^N$.
Since $I_{\bs j}\ne\emptyset$, we have $Z_{j_k}<Z_{j_k+1}$ for all $k = 1, \ldots, N$.
Put $V_{\bs j} = \{(v_1,\ldots,v_N)\in \overline{I_{\bs j}}:v_i\in\{Z_{j_i}, Z_{j_i+1}\} \text{ for } i=1,\ldots,N\}$.
We can see that $\overline{I_{\bs j}}$ is the convex hull of the finite vertex set $V_{\bs j}$, so that $\sup_{\bs{v}\in\overline{I_{\bs{j}}}} d(\bs{v},\bs{c}) = \max_{\bs{v}\in V_{\bs j}}d(\bs{v}, \bs{c})$ since $\overline{I_{\bs{j}}}\supset V_{\bs j}$ by definition and the closed ball with center $\bs{c}$ and radius $\max_{\bs{v}\in V_{\bs j}}d(\bs{v}, \bs{c})$ contains $\overline{I_{\bs{j}}}$.
Noting that $\sup_{\bs{v}\in I_{\bs j}} d(\bs{v},\bs{c}) = \sup_{\bs{v}\in\overline{I_{\bs{j}}}} d(\bs{v},\bs{c})$, we obtain $D_N(I_{\bs{j}}, \{\bs{c}\}) = \max_{\bs{v}\in V_{\bs j}}d(\bs{v}, \bs{c})$.
Since $d(\bs{v}, \bs{c})$ for each $\bs{v}\in V_{\bs j}$ is obviously measurable on $\Omega_{\bs{t},\bs{s}}$, $D_N(I_{\bs{j}}, \{\bs{c}\})$ and also $D_N(\V_{N,m,n}, \V_N)$ are measurable on $\Omega_{\bs{t},\bs{s}}$.
\end{proof}


\section{Proof for \Cref{thm:rho_hat.fibonacci}}
\label{sec:proof.rho_hat.fibonacci}
Here we assume the same setting as in \Cref{sec:FOVL}.
We denote by $R[x]$ and $R[[x]]$ the rings of polynomials and formal power series in $x$ over a ring $R$, respectively.

\begin{definition}
For $\bs\gamma \in \Gamma_{k,k}$ ($k\in\N$), define
\begin{align*}
	\delta_{\bs\gamma}(i) &= N_0(\bs\gamma_{0:i}) - N_1(\bs{\gamma}_{0:i})
	&& (i = 0, 1, \ldots, 2k), \\
	d_{\bs\gamma}(i, j) &= | \delta_{\bs\gamma}(i) | + |\delta_{\bs\gamma}(i) - \delta_{\bs\gamma}(j)| + | \delta_{\bs\gamma}(j)|
	&& (i, j = 0, 1, \ldots, 2k).
\end{align*}
Note that $\delta_{\bs\gamma}(0) = \delta_{\bs\gamma}(2k) = 0$, 
	\begin{equation}\label{eq:deltaF}
		\delta_{\bs\gamma}(i) = k\left(\wh F_{0, \bs\gamma}(i) - \wh F_{1, \bs\gamma}(i)\right) \qquad (k > 0),
	\end{equation}
	and
	\begin{equation}\label{eq:dsym}
		d_{\bs\gamma}(i, j) = d_{\bs\gamma}(j, i)
	\end{equation} 
	by definition.
\end{definition}

\begin{lemma}\label{thm:rho2=maxd}
	For all $\bs\gamma \in \Gamma_{n, n}$,
	\[ \wh\rho_2(\bs\gamma) = 1 - \frac1{2n}\max_{0 \le j_1, j_2 \le 2n} d_{\bs\gamma}(j_1, j_2). \]
\end{lemma}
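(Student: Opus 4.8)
The plan is to reduce the two-dimensional minimization defining $\wh\rho_2$ to the combinatorial quantity $d_{\bs\gamma}$ by means of the elementary identity $\min\{u,v\} = \tfrac12(u+v-|u-v|)$, applied termwise to the three summands of \cref{eq:rb}.

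First I would fix $\bs\gamma\in\Gamma_{n,n}$, so that $m+n=2n$, and record from \cref{eq:Fb} the boundary values $\wh F_{0,\bs\gamma}(0)=\wh F_{1,\bs\gamma}(0)=0$ and $\wh F_{0,\bs\gamma}(2n)=\wh F_{1,\bs\gamma}(2n)=1$. For indices $0\le j_1\le j_2\le 2n$, writing $j_0=0$ and $j_3=2n$ as in \cref{eq:hmn}, I would apply the $\min$ identity to each term of \cref{eq:rb} to obtain
\[
\wh r_{\bs\gamma}(j_1,j_2)
= \frac12\sum_{i=0}^2\Bigl(\wh F_{0,\bs\gamma}\big|_{j_i}^{j_{i+1}} + \wh F_{1,\bs\gamma}\big|_{j_i}^{j_{i+1}}\Bigr)
- \frac12\sum_{i=0}^2\Bigl|\wh F_{0,\bs\gamma}\big|_{j_i}^{j_{i+1}} - \wh F_{1,\bs\gamma}\big|_{j_i}^{j_{i+1}}\Bigr|.
\]
Both telescoping sums in the first group collapse to $1$ by the boundary values, so that group contributes exactly $1$. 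For the second group I would invoke \cref{eq:deltaF} to write $\wh F_{0,\bs\gamma}|_{j_i}^{j_{i+1}} - \wh F_{1,\bs\gamma}|_{j_i}^{j_{i+1}} = (\delta_{\bs\gamma}(j_{i+1}) - \delta_{\bs\gamma}(j_i))/n$, turning that group into $\tfrac1{2n}\sum_{i=0}^2 |\delta_{\bs\gamma}(j_{i+1}) - \delta_{\bs\gamma}(j_i)|$.

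Next, using $\delta_{\bs\gamma}(0)=\delta_{\bs\gamma}(2n)=0$ I would evaluate this three-term sum explicitly as $|\delta_{\bs\gamma}(j_1)| + |\delta_{\bs\gamma}(j_1)-\delta_{\bs\gamma}(j_2)| + |\delta_{\bs\gamma}(j_2)| = d_{\bs\gamma}(j_1,j_2)$, yielding the pointwise identity $\wh r_{\bs\gamma}(j_1,j_2) = 1 - \tfrac1{2n}d_{\bs\gamma}(j_1,j_2)$. Minimizing over $0\le j_1\le j_2\le 2n$ then converts the $\min$ of $\wh r_{\bs\gamma}$ into a $\max$ of $d_{\bs\gamma}$ over the same ordered range. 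Finally, since $d_{\bs\gamma}$ is symmetric by \cref{eq:dsym}, the maximum over $j_1\le j_2$ equals the maximum over all $0\le j_1,j_2\le 2n$, which is precisely the stated formula.

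The computation is entirely elementary, so there is no serious obstacle; the only points requiring care are the termwise application of the $\min$ identity together with the telescoping (which relies on the boundary values of $\wh F_{0,\bs\gamma}$ and $\wh F_{1,\bs\gamma}$), and the final passage from the ordered index range to the unordered one via the symmetry of $d_{\bs\gamma}$.
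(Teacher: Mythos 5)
Your proof is correct and follows essentially the same route as the paper: the paper introduces the auxiliary sum of maxima $\wh s_{\bs\gamma}$ and computes $\wh s_{\bs\gamma}+\wh r_{\bs\gamma}=2$ and $\wh s_{\bs\gamma}-\wh r_{\bs\gamma}=d_{\bs\gamma}(j_1,j_2)/n$, which is just a repackaging of the identity $\min\{u,v\}=\tfrac12(u+v-|u-v|)$ that you apply termwise, with the same telescoping, the same use of \cref{eq:deltaF}, and the same final appeal to the symmetry \cref{eq:dsym}. No gaps.
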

\begin{proof}
	Let us put 
	\begin{align*}
		\wh s_{\bs\gamma}(j_1, j_2)
		&= \sum_{i = 0}^2 \max\left\{ \wh{F}_{0, \bs\gamma}|_{j_i}^{j_{i + 1}}, \wh{F}_{1, \bs\gamma}|_{j_i}^{j_{i + 1}} \right\}\qquad(0\le j_1\le j_2\le 2n),
	\end{align*}
	where $j_0 = 0$, $j_3 = 2n$. Then
	\begin{align*}
		& \wh s_{\bs\gamma}(j_1, j_2) + \wh r_{\bs\gamma}(j_1, j_2) \\
		&= \sum_{i = 0}^2
		\left(\max\left\{ \wh{F}_{0, \bs\gamma}|_{j_i}^{j_{i + 1}}, \wh{F}_{1, \bs\gamma}|_{j_i}^{j_{i + 1}} \right\}
		+ \min\left\{ \wh{F}_{0, \bs\gamma}|_{j_i}^{j_{i + 1}}, \wh{F}_{1, \bs\gamma}|_{j_i}^{j_{i + 1}} \right\}\right) \\
		&= \sum_{i = 0}^2 \left( \wh{F}_{0, \bs\gamma}|_{j_i}^{j_{i + 1}} + \wh{F}_{1, \bs\gamma}|_{j_i}^{j_{i + 1}} \right) \\
		&= 2.
	\end{align*}
	On the other hand, by \Cref{eq:deltaF},
	\begin{align*}
		& \wh s_{\bs\gamma}(j_1, j_2) - \wh r_{\bs\gamma}(j_1, j_2) \\
		&= \sum_{i = 0}^2
		\left(\max\left\{ \wh{F}_{0, \bs\gamma}|_{j_i}^{j_{i + 1}}, \wh{F}_{1, \bs\gamma}|_{j_i}^{j_{i + 1}} \right\}
		- \min\left\{ \wh{F}_{0, \bs\gamma}|_{j_i}^{j_{i + 1}}, \wh{F}_{1, \bs\gamma}|_{j_i}^{j_{i + 1}} \right\}\right) \\
		&= \sum_{i = 0}^2 \left| \wh{F}_{0, \bs\gamma}|_{j_i}^{j_{i + 1}} - \wh{F}_{1, \bs\gamma}|_{j_i}^{j_{i + 1}} \right| \\
		&= \frac{1}{n}\sum_{i = 0}^2 \left|\delta_{\bs\gamma}(j_{i+1}) -  \delta_{\bs\gamma}(j_i)\right|\\
		&= \frac{d_{\bs\gamma}(j_1, j_2)}n.
	\end{align*}
	Hence we have
	\begin{equation*}
		\wh r_{\bs\gamma}(j_1, j_2)
		= 1 - \frac{d_{\bs\gamma}(j_1, j_2)}{2n},
	\end{equation*}
	so that
	\begin{align*}
		\wh\rho_2(\bs\gamma)
		&= \min_{0 \le j_1 \le j_2 \le 2n} \wh r_{\bs\gamma}(j_1, j_2) \\
		&= 1 - \frac1{2n}\max_{0 \le j_1 \le j_2 \le 2n} d_{\bs\gamma}(j_1, j_2)\\
		&= 1 - \frac1{2n}\max_{0 \le j_1, j_2 \le 2n} d_{\bs\gamma}(j_1, j_2)
	\end{align*}
	by \Cref{eq:dsym}.
\end{proof}
\begin{definition}
	For $\bs\gamma \in \Gamma_{k, k}$ ($k\in\N$), define
\begin{align*}
	\overline\delta_{\bs\gamma} = \max_{0 \leq i \leq 2k} \delta_{\bs\gamma}(i), \qquad
	\underline\delta_{\bs\gamma} = \min_{0 \leq i \leq 2k} \delta_{\bs\gamma}(i).
\end{align*}
	Note that $\underline\delta_{\bs\gamma} \le 0 \le \overline\delta_{\bs\gamma}$ since $\delta_{\bs\gamma}(0) = 0$.
\end{definition}

\begin{lemma}\label{thm:maxd=diff}
	For all $\bs\gamma \in \Gamma_{n, n}$,
	\begin{align*}
		\max_{0 \leq i, j \leq 2n} d_{\bs\gamma}(i, j) = 2\bigl(\overline\delta_{\bs\gamma} - \underline\delta_{\bs\gamma}\bigr).
	\end{align*}
\end{lemma}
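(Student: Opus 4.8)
The plan is to reduce the three-term quantity $d_{\bs\gamma}(i,j)$ to a single difference of a maximum and a minimum, and then optimize those two pieces independently. The starting observation is that $d_{\bs\gamma}(i,j)=|\delta_{\bs\gamma}(i)|+|\delta_{\bs\gamma}(i)-\delta_{\bs\gamma}(j)|+|\delta_{\bs\gamma}(j)|$ is exactly the sum of the three pairwise distances among the three collinear points $0$, $\delta_{\bs\gamma}(i)$, $\delta_{\bs\gamma}(j)$ on the real line. For any three real numbers, the sum of their pairwise distances equals twice the distance between the smallest and the largest, i.e.
\[
	|a|+|a-b|+|b| = 2\max\{a,b,0\} - 2\min\{a,b,0\}\qquad(a,b\in\R).
\]
I would record this as a one-line lemma, proved by noting that for points ordered $p\le q\le r$ one has $(q-p)+(r-q)+(r-p)=2(r-p)$ (a short case check on the sign ordering of $0$, $a$, $b$ gives the same thing).

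Applying the identity with $a=\delta_{\bs\gamma}(i)$ and $b=\delta_{\bs\gamma}(j)$ yields
\[
	d_{\bs\gamma}(i,j) = 2\max\{\delta_{\bs\gamma}(i),\delta_{\bs\gamma}(j),0\} - 2\min\{\delta_{\bs\gamma}(i),\delta_{\bs\gamma}(j),0\}.
\]
Now I would use $\delta_{\bs\gamma}(0)=0$, which forces $\underline\delta_{\bs\gamma}\le 0\le\overline\delta_{\bs\gamma}$. For the upper bound, every value $\delta_{\bs\gamma}(\cdot)$ and the number $0$ all lie in $[\underline\delta_{\bs\gamma},\overline\delta_{\bs\gamma}]$, so $\max\{\delta_{\bs\gamma}(i),\delta_{\bs\gamma}(j),0\}\le\overline\delta_{\bs\gamma}$ and $\min\{\delta_{\bs\gamma}(i),\delta_{\bs\gamma}(j),0\}\ge\underline\delta_{\bs\gamma}$ for all $i,j$, giving $d_{\bs\gamma}(i,j)\le 2(\overline\delta_{\bs\gamma}-\underline\delta_{\bs\gamma})$. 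For the matching lower bound, I would pick indices $i^\ast$ and $j^\ast$ attaining $\overline\delta_{\bs\gamma}$ and $\underline\delta_{\bs\gamma}$; then, because $\underline\delta_{\bs\gamma}\le 0\le\overline\delta_{\bs\gamma}$, the maximum equals $\overline\delta_{\bs\gamma}$ and the minimum equals $\underline\delta_{\bs\gamma}$, so $d_{\bs\gamma}(i^\ast,j^\ast)=2(\overline\delta_{\bs\gamma}-\underline\delta_{\bs\gamma})$. Taking the maximum over $i,j$ gives the claimed equality.

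There is no serious obstacle here; the only step demanding any care is the pointwise identity, and even that is transparent once it is read as ``sum of pairwise distances equals twice the diameter'' for three collinear points. I would emphasize that the argument uses only the real values $\delta_{\bs\gamma}(i)$ together with $\delta_{\bs\gamma}(0)=0$, and that neither the integrality of the $\delta_{\bs\gamma}(i)$ nor the symmetry \cref{eq:dsym} is needed (the latter was already used in \cref{thm:rho2=maxd} to pass from $0\le j_1\le j_2\le 2n$ to $0\le j_1,j_2\le 2n$).
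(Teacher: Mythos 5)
Your proof is correct. The overall skeleton is the same as the paper's---establish the pointwise bound $d_{\bs\gamma}(i,j)\le 2(\overline\delta_{\bs\gamma}-\underline\delta_{\bs\gamma})$ for all $i,j$, then exhibit attainment at indices realizing $\overline\delta_{\bs\gamma}$ and $\underline\delta_{\bs\gamma}$, using $\underline\delta_{\bs\gamma}\le 0\le\overline\delta_{\bs\gamma}$---but the route to the pointwise bound differs. The paper splits into three cases according to the signs of $\delta_{\bs\gamma}(i)$ and $\delta_{\bs\gamma}(j)$ (both positive, both negative, product nonpositive) and computes $d_{\bs\gamma}(i,j)$ as $2\overline\delta_{\bs\gamma,i,j}$, $-2\underline\delta_{\bs\gamma,i,j}$, or $2|\delta_{\bs\gamma}(i)-\delta_{\bs\gamma}(j)|$ respectively. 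Your identity $|a|+|a-b|+|b|=2\max\{a,b,0\}-2\min\{a,b,0\}$ subsumes all three cases at once: in each of the paper's cases the right-hand side specializes to exactly the expression the paper derives. What your version buys is a single unified step with a transparent geometric reading (sum of pairwise distances of three collinear points equals twice their diameter), at the cost of stating and checking one auxiliary identity; the paper's version avoids introducing the identity but pays with a three-way case analysis. Your closing remark is also accurate: only $\delta_{\bs\gamma}(0)=0$ is needed, and neither integrality nor the symmetry \cref{eq:dsym} enters here.
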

\begin{proof}
	Denote $\overline\delta_{\bs\gamma, i, j} = \max\,\{\delta_{\bs\gamma}(i), \delta_{\bs\gamma}(j)\} $
	and $\underline\delta_{\bs\gamma, i, j} = \min\,\{\delta_{\bs\gamma}(i), \delta_{\bs\gamma}(j)\} $.
	Note that $\overline\delta_{\bs\gamma, i, j} + \underline\delta_{\bs\gamma, i, j} = \delta_{\bs\gamma}(i) + \delta_{\bs\gamma}(j)$
	and $\overline\delta_{\bs\gamma, i, j} - \underline\delta_{\bs\gamma, i, j} = |\delta_{\bs\gamma}(i) - \delta_{\bs\gamma}(j)|$.
	
	If $\delta_{\bs\gamma}(i) > 0$ and $\delta_{\bs\gamma}(j) > 0$, then
	\begin{align*}
		d_{\bs\gamma}(i, j)
		&= \delta_{\bs\gamma}(i) + \delta_{\bs\gamma}(j)
		+ |\delta_{\bs\gamma}(i) - \delta_{\bs\gamma}(j)| \\
		&= \overline\delta_{\bs\gamma, i, j} + \underline\delta_{\bs\gamma, i, j}
		+ \overline\delta_{\bs\gamma, i, j} - \underline\delta_{\bs\gamma, i, j} \\
		&= 2\overline\delta_{\bs\gamma, i, j} \\
		&\le 2\overline\delta_{\bs\gamma} \\
		&\le 2\bigl(\overline\delta_{\bs\gamma} - \underline\delta_{\bs\gamma}\bigr).
	\end{align*}
	
	If $\delta_{\bs\gamma}(i) < 0$ and $\delta_{\bs\gamma}(j) < 0$, then
	\begin{align*}
		d_{\bs\gamma}(i, j)
		&= - \delta_{\bs\gamma}(i) - \delta_{\bs\gamma}(j)
		+ |\delta_{\bs\gamma}(i) - \delta_{\bs\gamma}(j)| \\
		&= - \bigl(\overline\delta_{\bs\gamma, i, j} + \underline\delta_{\bs\gamma, i, j}\bigr)
		+ \overline\delta_{\bs\gamma, i, j} - \underline\delta_{\bs\gamma, i, j} \\
		&= -2\underline\delta_{\bs\gamma, i, j} \\
		&\le -2\underline\delta_{\bs\gamma} \\
		&\le 2\bigl(\overline\delta_{\bs\gamma} - \underline\delta_{\bs\gamma}\bigr).
	\end{align*}
	
	If $\delta_{\bs\gamma}(i)\delta_{\bs\gamma}(j) \le 0$, then
	\begin{align*}
		d_{\bs\gamma}(i, j)
		&= 2|\delta_{\bs\gamma}(i) - \delta_{\bs\gamma}(j)| \\
		&\le 2\bigl(\overline\delta_{\bs\gamma} - \underline\delta_{\bs\gamma}\bigr).
	\end{align*}
	
	Taken together, we have
	$d_{\bs\gamma}(i, j) \le 2\bigl(\overline\delta_{\bs\gamma} - \underline\delta_{\bs\gamma}\bigr)$
	in general.
	On the other hand, if $\delta_{\bs\gamma}(i) = \overline\delta_{\bs\gamma}$
	and $\delta_{\bs\gamma}(j) = \underline\delta_{\bs\gamma}$,
	then
	\begin{align*}
		d_{\bs\gamma}(i, j)
		&= \bigl|\overline\delta_{\bs\gamma}\bigr|
		+ \bigl|\overline\delta_{\bs\gamma} - \underline\delta_{\bs\gamma}\bigr|
		+ \bigl|\underline\delta_{\bs\gamma}\bigr| \\
		&= \overline\delta_{\bs\gamma}
		+ \bigl(\overline\delta_{\bs\gamma} - \underline\delta_{\bs\gamma}\bigr)
		- \underline\delta_{\bs\gamma} \\
		&= 2\bigl(\overline\delta_{\bs\gamma} - \underline\delta_{\bs\gamma}\bigr).
	\end{align*}
This completes the proof.
\end{proof}
\begin{theorem}\label{thm:rho&delta}
	For all $\bs\gamma \in \Gamma_{n, n}$,
	\begin{equation*}
		\widehat\rho_2(\bs\gamma) = 1 - \frac{\overline\delta_{\bs\gamma} - \underline\delta_{\bs\gamma}}{n}.
	\end{equation*}
\end{theorem}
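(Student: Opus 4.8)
The plan is to observe that this statement is an immediate consequence of the two preceding lemmas, so the proof reduces to a single chain of substitutions with no substantive obstacle. First I would invoke \cref{thm:rho2=maxd}, which already rewrites the combinatorial quantity $\wh\rho_2(\bs\gamma)$ in terms of the pairwise distance function:
\begin{equation*}
	\wh\rho_2(\bs\gamma) = 1 - \frac{1}{2n}\max_{0 \le j_1, j_2 \le 2n} d_{\bs\gamma}(j_1, j_2).
\end{equation*}
This lemma does the real analytic work, converting the minimization defining $\wh\rho_2$ (over the choice of two split points $j_1 \le j_2$) into a maximization of $d_{\bs\gamma}$, using the identity $\wh s_{\bs\gamma} + \wh r_{\bs\gamma} = 2$ together with \cref{eq:deltaF}.

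Next I would substitute the evaluation of that maximum supplied by \cref{thm:maxd=diff}, namely $\max_{0 \le i,j \le 2n} d_{\bs\gamma}(i,j) = 2(\overline\delta_{\bs\gamma} - \underline\delta_{\bs\gamma})$. Plugging this into the display above, the factor of $2$ in the numerator cancels against the $2n$ in the denominator, leaving exactly
\begin{equation*}
	\wh\rho_2(\bs\gamma) = 1 - \frac{1}{2n}\cdot 2\bigl(\overline\delta_{\bs\gamma} - \underline\delta_{\bs\gamma}\bigr) = 1 - \frac{\overline\delta_{\bs\gamma} - \underline\delta_{\bs\gamma}}{n},
\end{equation*}
which is the asserted formula.

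Since both nontrivial steps have already been discharged in \cref{thm:rho2=maxd,thm:maxd=diff}, there is no hard part remaining here; the only thing to check is that the ranges of indices match (both lemmas range over $0 \le i,j \le 2n$ with $k = n$, so this is immediate) and that the cancellation of constants is carried out correctly. The geometric content—that the worst split for the two-part overlap is governed by the gap between the running maximum and running minimum of the signed count process $\delta_{\bs\gamma}$—lives entirely in the earlier lemmas, and this theorem simply packages their conclusions into the clean closed form that will be used to derive the generating-function identity in \cref{original_conjecture}.
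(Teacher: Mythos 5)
Your proposal is correct and matches the paper's proof exactly: the paper also derives the result by combining \cref{thm:rho2=maxd} with \cref{thm:maxd=diff} and cancelling the factor of $2$. Nothing further is needed.
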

\begin{proof}
	This follows immediately from \Cref{thm:rho2=maxd,thm:maxd=diff}.
\end{proof}

The following arguments (from Definition 7.6 to Theorem 7.15) refer to \cite[Section I]{AnaCom}.

\begin{definition}
	A \emph{combinatorial class} is a set $\mathcal A$ on which a \emph{size function} $|\mathord{\,\cdot\,}|:\mathcal A\to\N$ is defined so that $\{\alpha \in \mathcal A : |\alpha|=k\}$ is finite for all $k\in\N$.
	Unless confusion arises,
	we simply say a \emph{class} instead of a combinatorial class.
	
	Any subset $\mathcal B\subset\mathcal A$ is also a class
	with its size function defined as in $\mathcal A$.
	The \emph{counting sequence} $\{a_k\}$ of $\mathcal A$ is defined by 
	\[ a_k = \#\{\alpha \in \mathcal A : |\alpha| = k\} \qquad (k \in \N), \]
	and the \textit{ordinary generating function (OGF)} $A(x) \in \Z[[x]]$ of $\mathcal A$ is by
	\[ A(x) = \sum_{k = 0}^\infty a_k x^k. \]
\end{definition}
\begin{definition}
	Let $\mathcal A$ and $\mathcal B$ be two classes.
	A map $\phi: \mathcal A \to \mathcal B$
	is called a \textit{homomorphism}
	between $\mathcal A$ and $\mathcal B$ if $|\alpha| = |\phi(\alpha)|$ for all $\alpha \in \mathcal A$.
	If, in addition, $\phi$ is bijective,
	then we call $\phi$ an \textit{isomorphism},
	say that $\mathcal A$ and $\mathcal B$ are \textit{isomorphic},
	or write $\mathcal A \cong \mathcal B$.
\end{definition}
\begin{remark}\label{isomorphic <=> same counting sequences}
	Let $\mathcal A$ and $\mathcal B$ be
	two classes,
	$\{a_k\}$ and $\{b_k\}$ their counting sequences,
	and $A(x)$ and $B(x)$ their OGFs, respectively.
	We can easily see that the following three statements are equivalent:
	\begin{enumerate}
		\item $\mathcal A \cong \mathcal B$.
		\item $a_k = b_k$ for all $k \in \N$.
		\item $A(x) = B(x)$.
	\end{enumerate}
\end{remark}
\begin{definition}
	The \textit{neutral class} $\mathcal E$
	and the \textit{atomic class} $\mathcal Z$
	are classes with $\#\mathcal E = \#\mathcal Z = 1$, $|\varepsilon| = 0$ ($\varepsilon \in \mathcal E$), and $|\zeta| = 1$ ($\zeta \in \mathcal Z$).
\end{definition}
\begin{remark}\label{thm:OGF1}
	The OGFs of $\mathcal E$ and $\mathcal Z$
	are $1$ and $x$ in $\Z[[x]]$, respectively.
\end{remark}
\begin{definition}\label{def:combinatorial sum}
	Let $\{A_i\}$ be a set of classes, where $i$ runs over some index set $I$.
	If $\mathcal B = \{(i, \alpha) : i \in I, \, \alpha \in \mathcal A_i \}$ is also a class with its size function defined by $|(i, \alpha)| = |\alpha|$, then we call $\mathcal B$ the \emph{combinatorial sum} (or simply the \emph{sum}) of $\{\mathcal A_i\}$ and write $\mathcal B = \bigsqcup_{i \in I} \mathcal A_i$.
	In particular, if $I = \{1,\ldots,k\}$,
	then $\mathcal B$ is always a class,
	and we may write $\mathcal B = \mathcal A_1 +\cdots+ \mathcal A_k$.
\end{definition}
\begin{definition}\label{def:cartesian product}
	The \emph{Cartesian product} (or simply the \emph{product}) $\mathcal A_1 \times\cdots\times \mathcal A_k$ of $k$ classes $\mathcal A_1,\ldots,\mathcal A_k$ is the class $\{(\alpha_1,\ldots,\alpha_k):\alpha_1\in\mathcal A_1,\ldots,\alpha_k\in\mathcal A_k\}$ whose size function is defined by $|(\alpha_1,\ldots, \alpha_k)| = |\alpha_1| +\cdots + |\alpha_k|$.
	
	For a class $\mathcal A$ and $k\in\N_+$, we may write $\mathcal A^k$ instead of $\mathcal A \times\cdots\times \mathcal A$ ($k$ times).
	Let $\mathcal A^0=\mathcal E = \{\varepsilon\}$.
	If a class $\mathcal B = \bigsqcup_{i \in \N} \mathcal A^i$ exists,
	then we call $\mathcal B$ a \textit{sequence class} of $\mathcal A$,
	and write $\mathcal B = \Seq(\mathcal A)$.
\end{definition}

\begin{remark}\label{thm:OGF of cartesian product}
	If $A_1(x),\ldots,A_k(x)$
	are the OGFs of classes $\mathcal A_1,\ldots,\mathcal A_k$, respectively,
	then the OGFs of $\mathcal A_1 +\ldots+ \mathcal A_k$ and $\mathcal A_1 \times\cdots\times \mathcal A_k$
	are $A_1(x) +\cdots+ A_k(x)$ and $A_1(x)\cdots A_k(x)$, respectively.
\end{remark}

\begin{theorem}\label{thm:sequence class}
{\normalfont (See \cite[Section I.2.1]{AnaCom} for reference.)}
	Let $\{a_i\}$ be the counting sequence of a class $\mathcal A$.
	Then $\Seq(\mathcal A)$ exists if and only if $a_0 = 0$.
\end{theorem}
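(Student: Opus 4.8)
The plan is to apply the finiteness requirement in the definition of a combinatorial class directly to $\Seq(\mathcal A) = \bigsqcup_{i \in \N} \mathcal A^i$. By \cref{def:combinatorial sum,def:cartesian product}, an element of $\Seq(\mathcal A)$ of size $k$ is a pair $(i, (\alpha_1, \ldots, \alpha_i))$ with $\alpha_1, \ldots, \alpha_i \in \mathcal A$ and $|\alpha_1| + \cdots + |\alpha_i| = k$, the $i = 0$ term contributing only the neutral element $\varepsilon$ of size $0$. The assertion that $\Seq(\mathcal A)$ exists is precisely the statement that the set of such elements is finite for every $k \in \N$, so the whole proof reduces to testing this finiteness against the hypothesis $a_0 = 0$.

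For the direction $a_0 = 0 \Rightarrow \Seq(\mathcal A)$ exists, I would observe that $a_0 = 0$ forces $|\alpha_j| \ge 1$ for every component, so any size-$k$ element has at most $k$ components, i.e.\ $i \in \{0, \ldots, k\}$. For each fixed such $i$, the class $\mathcal A^i$ is a genuine class by \cref{def:cartesian product}, so it has only finitely many elements of size $k$; equivalently, by \cref{thm:OGF of cartesian product} this count is $[x^k]A(x)^i$, which is finite. Summing finitely many finite quantities over $i \le k$ bounds the number of size-$k$ elements of $\Seq(\mathcal A)$, establishing that it is a class.

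For the converse I would argue by contraposition: assuming $a_0 \ge 1$, fix some $\alpha_0 \in \mathcal A$ with $|\alpha_0| = 0$. Then for every $i \in \N$ the tuple of $i$ copies of $\alpha_0$ yields an element $(i, (\alpha_0, \ldots, \alpha_0))$ of $\Seq(\mathcal A)$ of size $0$, and these are pairwise distinct because the combinatorial sum tags each element by its index $i$. Hence there are infinitely many elements of size $0$, so $\Seq(\mathcal A)$ violates the finiteness condition and is not a class.

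I expect no serious obstacle here; the argument is essentially bookkeeping. The one point deserving care is the role of the disjoint-union index $i$ in \cref{def:combinatorial sum}: it is exactly what makes the size-$0$ tuples of different lengths count as distinct elements in the converse, and what lets me partition the size-$k$ elements by component count in the forward direction. I would also treat the $i = 0$ term (the neutral element) explicitly in both directions so that the edge case $k = 0$ is handled correctly.
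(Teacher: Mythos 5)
Your argument is correct. Note, however, that the paper does not actually prove this statement: it is labelled ``See [Section I.2.1]{AnaCom} for reference'' and the burden is entirely deferred to Flajolet--Sedgewick. What you supply is the standard self-contained argument, and both halves are sound. In the forward direction, $a_0=0$ forces every component to have size at least $1$, so a size-$k$ element of $\bigsqcup_{i\in\N}\mathcal A^i$ lies in $\mathcal A^i$ for some $i\le k$, and each such $\mathcal A^i$ contributes only finitely many size-$k$ elements (a finite product of classes is a class, per \cref{def:cartesian product}); summing over $i\in\{0,\ldots,k\}$ gives finiteness. In the converse, an $\alpha_0$ of size $0$ produces, for every $i$, a distinct size-$0$ element $(i,(\alpha_0,\ldots,\alpha_0))$ --- distinct precisely because the combinatorial sum of \cref{def:combinatorial sum} tags by the index $i$ --- so the size-$0$ level set is infinite and $\Seq(\mathcal A)$ fails to be a class. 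Your attention to the role of the tagging index and to the $i=0$ (neutral) term is exactly the right bookkeeping; there is no gap.
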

\begin{theorem}\label{thm:OGF of sequence class}
{\normalfont (See \cite[Section I.2.2, Theorem I.1]{AnaCom} for the proof.)}
	Let $A(x)$ be the OGF of a class $\mathcal A$ and assume that $\Seq(\mathcal A)$ exists.
	Then the OGF of $\Seq(\mathcal A)$ is $1 / (1 - A(x))$.
\end{theorem}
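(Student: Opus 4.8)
The plan is to compute the OGF of $\Seq(\mathcal A)$ directly from its definition as an infinite combinatorial sum, reducing everything to the product and sum rules already established. By \cref{def:cartesian product}, $\Seq(\mathcal A) = \bigsqcup_{i\in\N}\mathcal A^i$ with $\mathcal A^0 = \mathcal E$, so if $\{b_k\}$ denotes the counting sequence of $\Seq(\mathcal A)$, then $b_k = \sum_{i=0}^\infty \#\{\alpha\in\mathcal A^i : |\alpha| = k\}$.

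First I would invoke \cref{thm:sequence class}: since $\Seq(\mathcal A)$ exists, the counting sequence $\{a_i\}$ of $\mathcal A$ satisfies $a_0 = 0$, i.e., $A(x)\in x\Z[[x]]$. This is the crucial structural fact. Each element of $\mathcal A^i$ is a tuple $(\alpha_1,\ldots,\alpha_i)$ with $|\alpha_j|\ge 1$ for every $j$ (because $a_0 = 0$), so $|(\alpha_1,\ldots,\alpha_i)|\ge i$; consequently $\#\{\alpha\in\mathcal A^i:|\alpha|=k\}=0$ whenever $i>k$. Thus the sum defining $b_k$ is finite.

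Next, by \cref{thm:OGF1,thm:OGF of cartesian product}, the OGF of $\mathcal A^i$ is $A(x)^i$ for every $i\in\N$ (with $A(x)^0 = 1$ being the OGF of $\mathcal E$). Combining this with the finiteness just established,
\[
	b_k = \sum_{i=0}^k [x^k]A(x)^i = [x^k]\sum_{i=0}^\infty A(x)^i,
\]
where the last equality is legitimate because, for fixed $k$, only the terms with $i\le k$ contribute to the coefficient of $x^k$. Hence the OGF of $\Seq(\mathcal A)$ is $\sum_{i=0}^\infty A(x)^i$ in $\Z[[x]]$.

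Finally I would identify this geometric series with $1/(1-A(x))$. Since $1-A(x)$ has constant term $1$, its multiplicative inverse exists in $\Z[[x]]$, and the telescoping identity $(1-A(x))\sum_{i=0}^M A(x)^i = 1 - A(x)^{M+1}$, together with the fact that $A(x)^{M+1}\in x^{M+1}\Z[[x]]$ has vanishing coefficients in degrees $\le M$, shows $(1-A(x))\sum_{i=0}^\infty A(x)^i = 1$. Therefore the OGF of $\Seq(\mathcal A)$ equals $1/(1-A(x))$, as claimed. The only point requiring care is the summability of the infinite family in $\Z[[x]]$ and the interchange of the combinatorial sum with the coefficient extraction; both are secured by the hypothesis $a_0 = 0$, while the rest is a direct application of the sum and product rules.
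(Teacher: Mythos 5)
Your proof is correct; the paper itself gives no proof of this statement but defers to \cite[Section I.2.2, Theorem I.1]{AnaCom}, and your argument is precisely the standard one given there: decompose $\Seq(\mathcal A)=\bigsqcup_i\mathcal A^i$, apply the product rule to get $A(x)^i$, and sum the geometric series, with the condition $a_0=0$ (from \cref{thm:sequence class}) guaranteeing that the infinite sum is locally finite in $\Z[[x]]$. Nothing is missing.
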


\begin{definition}
	Define a class $\mathcal G$ by
	\begin{align*}
		\mathcal G &= \bigcup_{i = 0}^\infty \Gamma_{i, i},\\
		|\bs\gamma| &= i \qquad (\bs\gamma \in \Gamma_{i, i}).
	\end{align*}
	For $k, l \in \N$,
	let $\mathcal G_{k, l} = \{\bs\gamma \in \mathcal G : -k\le\underline\delta_{\bs\gamma},\, \overline\delta_{\bs\gamma}\le l\}$
	and $G_{k, l}(x)$ be the OGF of $\mathcal G_{k, l}$.
	For $\bs\gamma = (\gamma_1, \ldots, \gamma_{2i}) \in \Gamma_{i, i}$ ($i \ge 1$), define
	\begin{align*}
		\lambda^+(\bs\gamma) &= (0, \gamma_1, \ldots, \gamma_{2i}, 1)\in\Gamma_{i+1,i+1}, \\
		\lambda^-(\bs\gamma) &= (1, \gamma_1, \ldots, \gamma_{2i}, 0)\in\Gamma_{i+1,i+1}.
	\end{align*}
	Put $\lambda^+(e) = (0, 1) \in \Gamma_{1, 1}$ and $\lambda^-(e) = (1, 0) \in \Gamma_{1, 1}$ for $e \in \Gamma_{0,0}$.
	Note that $\lambda^+$ and $\lambda^-$ are injective on $\mathcal G$.
\end{definition}
\begin{lemma}\label{thm:pmcong}
	For any $\mathcal H \subset \mathcal G$, $\mathcal H \times \mathcal Z \cong \lambda^+(\mathcal H) \cong \lambda^-(\mathcal H)$.
\end{lemma}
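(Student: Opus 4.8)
The plan is to prove both isomorphisms by exhibiting explicit size-preserving bijections, which is exactly what the definition of $\cong$ demands. Write $\mathcal Z = \{\zeta\}$ with $|\zeta| = 1$, so that $\mathcal H \times \mathcal Z = \{(\alpha, \zeta) : \alpha \in \mathcal H\}$ and $|(\alpha,\zeta)| = |\alpha| + 1$ by the definition of the Cartesian product. The key structural observation is that $\lambda^+$ and $\lambda^-$ each raise the size by exactly one: if $\alpha \in \mathcal H \cap \Gamma_{i,i}$, then $\lambda^\pm(\alpha) \in \Gamma_{i+1,i+1}$, so $|\lambda^\pm(\alpha)| = i + 1 = |\alpha| + 1$. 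This unit increment is precisely what the atomic factor $\mathcal Z$ contributes, which is why $\mathcal H \times \mathcal Z$ is the natural object to compare against $\lambda^\pm(\mathcal H)$.

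First I would treat $\lambda^+$. Define $\phi^+ : \mathcal H \times \mathcal Z \to \lambda^+(\mathcal H)$ by $\phi^+((\alpha, \zeta)) = \lambda^+(\alpha)$. By the size computation above, $|\phi^+((\alpha,\zeta))| = |\alpha| + 1 = |(\alpha,\zeta)|$, so $\phi^+$ is a homomorphism. Surjectivity is immediate, since $\lambda^+(\mathcal H)$ is by definition the image $\{\lambda^+(\alpha) : \alpha \in \mathcal H\}$. For injectivity, suppose $\phi^+((\alpha,\zeta)) = \phi^+((\alpha',\zeta))$; then $\lambda^+(\alpha) = \lambda^+(\alpha')$, and since $\lambda^+$ is injective on $\mathcal G$ (as recorded when it was defined, its inverse simply strips the first and last coordinates) we get $\alpha = \alpha'$, whence $(\alpha,\zeta) = (\alpha',\zeta)$ because $\mathcal Z$ is a singleton. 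Thus $\phi^+$ is an isomorphism and $\mathcal H \times \mathcal Z \cong \lambda^+(\mathcal H)$.

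The same argument verbatim, with $\lambda^+$ replaced by $\lambda^-$, yields an isomorphism $\phi^- : \mathcal H \times \mathcal Z \to \lambda^-(\mathcal H)$, so $\mathcal H \times \mathcal Z \cong \lambda^-(\mathcal H)$. I would then conclude $\lambda^+(\mathcal H) \cong \lambda^-(\mathcal H)$ by transitivity of $\cong$, concretely via the composite $\phi^- \circ (\phi^+)^{-1}$, which is again a size-preserving bijection. I do not anticipate a genuine obstacle here: the whole content is the observation that $\lambda^\pm$ append one atom and are injective with an obvious coordinate-stripping inverse. The only point requiring any care is bookkeeping the size function through the product and confirming that $\lambda^\pm$ land in $\Gamma_{i+1,i+1}$ for every $i \in \N$, including the edge case $\alpha = e \in \Gamma_{0,0}$, where $\lambda^+(e) = (0,1)$ and $\lambda^-(e) = (1,0)$ lie in $\Gamma_{1,1}$ as required.
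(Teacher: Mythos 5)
Your proof is correct and follows essentially the same route as the paper's: both define the map $(\alpha,\zeta)\mapsto\lambda^\pm(\alpha)$, check it preserves size (since $\lambda^\pm$ appends one atom), and use the injectivity of $\lambda^\pm$ together with $\mathcal Z$ being a singleton to conclude it is an isomorphism. Your version merely spells out the injectivity and edge-case bookkeeping a little more explicitly.
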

\begin{proof}
	Since $|(\bs\gamma, \zeta)| = |\bs\gamma| + |\zeta| = |\bs\gamma| + 1 = |\lambda^+(\bs\gamma)|$
	for all $(\bs\gamma, \zeta) \in \mathcal H \times \mathcal Z$,
	the bijection $\nu^+: \mathcal H \times \mathcal Z \to \lambda^+(\mathcal H)$
	defined by $\nu^+(\bs\gamma, \zeta) = \lambda^+(\bs\gamma)$
	is a homomorphism,
	hence $\mathcal H \times \mathcal Z \cong \lambda^+(\mathcal H)$.
	Similarly, the bijection $\nu^-: \mathcal H \times \mathcal Z \to \lambda^-(\mathcal H)$
	defined by $\nu^-(\bs\gamma, \zeta) = \lambda^-(\bs\gamma)$
	is a homomorphism,
	hence $\mathcal H \times \mathcal Z \cong \lambda^-(\mathcal H)$.
\end{proof}
\begin{corollary}\label{thm:pmOGF}
	If $H(x)$ is the OGF of $\mathcal H \subset \mathcal G$,
	then the OGFs of $\lambda^+(\mathcal H)$ and $\lambda^-(\mathcal H)$ are both equal to $xH(x)$.
\end{corollary}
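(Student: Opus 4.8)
The plan is to read the result off directly from the class isomorphisms established in \cref{thm:pmcong}, combined with the generating-function dictionary assembled in the preceding remarks. Since OGFs are invariant under isomorphism (isomorphic classes have the same counting sequence, hence the same OGF), it suffices to compute the OGF of the single product class $\mathcal H \times \mathcal Z$ and then transport that value along the two isomorphisms $\mathcal H \times \mathcal Z \cong \lambda^+(\mathcal H)$ and $\mathcal H \times \mathcal Z \cong \lambda^-(\mathcal H)$.

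First I would apply \cref{thm:OGF of cartesian product}, which says that the OGF of a Cartesian product is the product of the OGFs of its factors. Hence the OGF of $\mathcal H \times \mathcal Z$ is $H(x)\,Z(x)$, where $Z(x)$ is the OGF of the atomic class $\mathcal Z$. By \cref{thm:OGF1}, $Z(x) = x$, so the OGF of $\mathcal H \times \mathcal Z$ equals $xH(x)$.

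Finally, combining this computation with the isomorphisms of \cref{thm:pmcong} and the fact that isomorphic classes share the same OGF, I conclude that both $\lambda^+(\mathcal H)$ and $\lambda^-(\mathcal H)$ have OGF $xH(x)$, which is exactly the claim.

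There is essentially no obstacle here: the corollary is an immediate consequence of \cref{thm:pmcong} once the product rule and the value $Z(x)=x$ are invoked. The only point worth a moment's attention is that $\lambda^+(\mathcal H)$ and $\lambda^-(\mathcal H)$ must genuinely be subclasses of $\mathcal G$ for their OGFs to be defined, but this is guaranteed by construction, since $\lambda^+$ and $\lambda^-$ map $\mathcal G$ injectively into $\mathcal G$ and the images inherit the size function of $\mathcal G$.
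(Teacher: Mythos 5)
Your argument is exactly the paper's proof: invoke \cref{thm:pmcong} together with the fact that isomorphic classes share an OGF (\cref{isomorphic <=> same counting sequences}), then compute the OGF of $\mathcal H \times \mathcal Z$ as $xH(x)$ via \cref{thm:OGF1,thm:OGF of cartesian product}. Correct and essentially identical to the paper's reasoning.
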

\begin{proof}
	By \Cref{isomorphic <=> same counting sequences,thm:pmcong},
	the OGFs of $\lambda^+(\mathcal H)$ and $\lambda^-(\mathcal H)$ are equal to that of $\mathcal H \times \mathcal Z$, which equals $xH(x)$ by \Cref{thm:OGF1,thm:OGF of cartesian product}.
\end{proof}
\begin{lemma}\label{thm:Gcong}
	For all $k, l \in \N$, we have
	$\mathcal G_{k + 1, 0} \cong \Seq(\lambda^-(\mathcal G_{k, 0}))$,
	$\mathcal G_{0, l + 1} \cong \Seq(\lambda^+(\mathcal G_{0, l}))$,
	and $\mathcal G_{k + 1, l + 1} \cong \Seq(\lambda^-(\mathcal G_{k, 0}) + \lambda^+(\mathcal G_{0, l}))$.
\end{lemma}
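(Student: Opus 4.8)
The plan is to realize all three isomorphisms through the classical \emph{first-return} (or \emph{arch}) decomposition of lattice paths. First I would fix the dictionary: to each $\bs\gamma \in \Gamma_{i,i}$ associate the path $j \mapsto \delta_{\bs\gamma}(j)$, which starts and ends at $0$ and moves by $+1$ on each $0$-symbol and $-1$ on each $1$-symbol. Under this correspondence $\mathcal G_{k, l}$ is exactly the class of paths confined to the band $[-k, l]$, while $\lambda^+$ (resp.\ $\lambda^-$) wraps a path with an initial up-step and a terminal down-step (resp.\ a down-step followed, after the path, by an up-step), raising (resp.\ lowering) the whole path by one level. Consequently $\lambda^-(\mathcal G_{k, 0})$ is precisely the set of paths in $[-(k+1), 0]$ that meet $0$ only at their two endpoints and stay strictly negative in between — the \emph{negative arches} — and $\lambda^+(\mathcal G_{0, l})$ is the corresponding set of \emph{positive arches} in $[0, l+1]$. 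Both kinds of arch have half-length at least $1$, so the sum $\lambda^-(\mathcal G_{k, 0}) + \lambda^+(\mathcal G_{0, l})$ has no element of size $0$ and its sequence class exists by \cref{thm:sequence class}.

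For the third (and most general) assertion I would exhibit an explicit size-preserving bijection $\Phi : \mathcal G_{k+1, l+1} \to \Seq(\lambda^-(\mathcal G_{k, 0}) + \lambda^+(\mathcal G_{0, l}))$. Given $\bs\gamma \in \mathcal G_{k+1, l+1}$, let $0 = t_0 < t_1 < \cdots < t_r = 2|\bs\gamma|$ enumerate the indices at which $\delta_{\bs\gamma}$ vanishes, and cut $\bs\gamma$ into the consecutive blocks $\bs\gamma^{(1)}, \ldots, \bs\gamma^{(r)}$ determined by these return times, with the empty $\bs\gamma = e$ sent to the empty tuple. Since the path moves by $\pm 1$ per step and does not revisit $0$ inside a block, each block stays strictly above or strictly below $0$ between its endpoints and is therefore a positive or a negative arch; stripping its wrapping step-pair and shifting by one level returns an element of $\mathcal G_{0, l}$ or of $\mathcal G_{k, 0}$, i.e.\ a preimage under $\lambda^+$ or $\lambda^-$ (and the starting symbol, $0$ or $1$, records the tag in the sum). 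Reading off this tagged list of arches defines $\Phi$; concatenation of arches is the manifest two-sided inverse, and since the half-length of $\bs\gamma$ is the sum of the half-lengths of its blocks, $\Phi$ preserves size. Being a bijective homomorphism, $\Phi$ is an isomorphism.

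The crux — and the step I expect to demand the most care — is verifying that the two band constraints \emph{decouple} under this decomposition. A negative arch never rises above $0$, so the upper bound $l+1$ imposes nothing on it, and its confinement to $[-(k+1), 0]$ is, after stripping and shifting, equivalent to membership in $\mathcal G_{k, 0}$; symmetrically a positive arch feels only the bound $l+1$ and strips to $\mathcal G_{0, l}$. This is exactly why the arch alphabet is the \emph{combinatorial sum} of the two one-sided classes rather than a class coupling $k$ and $l$, and one must check the endpoint/shift bookkeeping (that $\overline\delta_{\bs\gamma}$ and $\underline\delta_{\bs\gamma}$ transform as claimed) to confirm that $\Phi$ both lands in and exhausts the stated sequence class. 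The first two assertions are then the degenerate cases of the same argument: in $\mathcal G_{k+1, 0}$ the path never goes positive, so only negative arches occur and $\Phi$ takes values in $\Seq(\lambda^-(\mathcal G_{k, 0}))$, and dually for $\mathcal G_{0, l+1}$.
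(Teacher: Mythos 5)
Your proof is correct and takes essentially the same approach as the paper: both decompose $\bs\gamma$ at the return times of $\delta_{\bs\gamma}$ to zero into a sequence of strictly positive or strictly negative arches, identified with elements of $\lambda^+(\mathcal G_{0,l})$ and $\lambda^-(\mathcal G_{k,0})$ respectively, with concatenation as the inverse. Your explicit checks that the sequence class exists and that the band constraints decouple are welcome details the paper leaves implicit.
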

\begin{proof}
Define a map $\sigma:\mathcal G_{k + 1, 0}\to \Seq(\lambda^-(\mathcal G_{k, 0}))$ by $\sigma(e)=(0,\varepsilon)$
	where $\varepsilon \in \mathcal E = (\lambda^-(\mathcal G_{k, 0}))^0 $,
	and by
\begin{equation*}
\sigma(\bs\gamma) = \left(p, (\bs\gamma_{j_0:j_1}, \ldots, \bs\gamma_{j_{p - 1}:j_p})\right)
\qquad (\bs\gamma\in\Gamma_{i,i};\ i\ge 1)
\end{equation*}
where $\{j_0,\ldots,j_p\}=\{j\in\{0,\ldots,2i\}:\delta_{\bs\gamma}(j)=0\}$ and $0=j_0<\cdots<j_p=2i$.
It follows from definition that $\sigma$ is bijective and $|\sigma(\bs\gamma)|=|\bs\gamma|$ for all $\bs\gamma\in\mathcal G_{k + 1, 0}$, so that $\sigma$ is an isomorphism, i.e., $\mathcal G_{k + 1, 0} \cong \Seq(\lambda^-(\mathcal G_{k, 0}))$.
We can similarly show that $\mathcal G_{0, l + 1} \cong \Seq(\lambda^+(\mathcal G_{0, l}))$ and $\mathcal G_{k + 1, l + 1} \cong \Seq(\lambda^-(\mathcal G_{k, 0}) + \lambda^+(\mathcal G_{0, l}))$.
\end{proof}

\begin{lemma}\label{thm:Qrel}
	For all $k, l \in \N_+$,
	\begin{equation}\label{eq:Qrel}
		Q_{k + 1}(x) Q_{l + 1}(x)
		- x^2 Q_{k - 1}(x) Q_{l - 1}(x)
		= Q_{k + l + 1}(x).
	\end{equation}
\end{lemma}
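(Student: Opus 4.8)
The plan is to reduce \cref{eq:Qrel} to a single bilinear ``addition formula'' for the sequence $\{Q_i\}$ and then combine two instances of it with the defining three-term recurrence. Throughout the proof of the auxiliary formula I would extend the recurrence to the index $i=-1$, which forces $Q_{-1}(x)=0$ (the relation $Q_1=Q_0-xQ_{-1}$ reads $1=1-xQ_{-1}$); with this convention the formula below stays valid at the boundary.

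First I would establish, for all $m,n\in\N$, the identity
\begin{equation*}
Q_{m+n}(x)=Q_m(x)Q_n(x)-xQ_{m-1}(x)Q_{n-1}(x). \tag{$\ast$}
\end{equation*}
Fixing $m$, I argue by induction on $n$. For $n=0$ the right-hand side is $Q_mQ_0-xQ_{m-1}Q_{-1}=Q_m$ since $Q_0=1$ and $Q_{-1}=0$, and for $n=1$ it is $Q_mQ_1-xQ_{m-1}Q_0=Q_m-xQ_{m-1}=Q_{m+1}$ by the recurrence. For the step, assuming $(\ast)$ for $n-1$ and $n$, the recurrence $Q_{m+n+1}=Q_{m+n}-xQ_{m+n-1}$ gives, after substituting both hypotheses and factoring,
\begin{equation*}
Q_{m+n+1}=Q_m(Q_n-xQ_{n-1})-xQ_{m-1}(Q_{n-1}-xQ_{n-2})=Q_mQ_{n+1}-xQ_{m-1}Q_n,
\end{equation*}
which is $(\ast)$ for $n+1$. (Equivalently, $(\ast)$ is the $(1,1)$-entry of $M^{m+n}=M^mM^n$ for the transfer matrix $M=\left(\begin{smallmatrix}1&-x\\1&0\end{smallmatrix}\right)$, whose $i$-th power has $(1,1)$- and $(2,1)$-entries $Q_i$ and $Q_{i-1}$; I would keep the elementary induction as the primary argument.)

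Given $(\ast)$, the target is immediate. Applying $(\ast)$ with $(m,n)=(k+1,l+1)$ and with $(m,n)=(k,l)$ yields $Q_{k+l+2}=Q_{k+1}Q_{l+1}-xQ_kQ_l$ and $Q_{k+l}=Q_kQ_l-xQ_{k-1}Q_{l-1}$, both with indices in range since $k,l\in\N_+$. Solving the recurrence $Q_{k+l+2}=Q_{k+l+1}-xQ_{k+l}$ for $Q_{k+l+1}$ and substituting these two expressions, the cross term $xQ_kQ_l$ cancels and one is left with $Q_{k+l+1}=Q_{k+1}Q_{l+1}-x^2Q_{k-1}Q_{l-1}$, which is \cref{eq:Qrel}.

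The computations are entirely routine; the only point requiring care is the bookkeeping of boundary indices, i.e.\ fixing $Q_{-1}=0$ so that $(\ast)$ is valid when $m$ or $n$ equals $0$, which is exactly what is needed to apply it at $(k,l)$ when $k$ or $l$ equals $1$. I expect the only genuinely nontrivial step to be recognizing the addition formula $(\ast)$ as the right intermediate statement, rather than attempting a direct induction on \cref{eq:Qrel}: there the gap between the indices $l+1$ and $l-1$ makes the three-term recurrence cumbersome to apply, whereas passing through $(\ast)$ turns the proof into two short inductions and one line of cancellation.
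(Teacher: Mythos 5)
Your proof is correct, but it reaches \cref{eq:Qrel} by a different route than the paper. The paper proves the identity directly by induction on $l$: it verifies the cases $l=1$ and $l=2$ by hand, then packages $Q_l$ and $Q_{l+1}$ into a vector $\bs Q_l$ and uses the transfer-matrix relation $\bs Q_{l+1}=R\bs Q_l$ to pass from the cases $l=i,\,i+1$ to $l=i+2$. You instead interpose the first-order addition formula $Q_{m+n}=Q_mQ_n-xQ_{m-1}Q_{n-1}$ (with the convention $Q_{-1}=0$, which is indeed forced by extending the recurrence and is consistent since $\Z[x]$ is a domain), prove it by a two-step induction on $n$, and then obtain \cref{eq:Qrel} by eliminating the cross term $xQ_kQ_l$ between the instances $(m,n)=(k+1,l+1)$ and $(m,n)=(k,l)$ via the three-term recurrence for $Q_{k+l+1}$. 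The two arguments are of comparable length and both hinge on a two-term induction, but yours has the advantage of isolating the more natural bilinear identity (the $(1,1)$-entry of $M^{m+n}=M^mM^n$, as you note), which is symmetric in $m$ and $n$ and immediately explains why the asymmetric-looking base cases of the paper work out; the paper's version avoids introducing $Q_{-1}$ and any auxiliary lemma, at the cost of two explicit polynomial computations for $l=1,2$. All index bookkeeping in your argument checks out, including the boundary uses of $Q_{-1}=0$ when $k$ or $l$ equals $1$.
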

\begin{proof}
	Since $Q_2(x) = Q_1(x) - xQ_0(x) = 1 - x$ by \Cref{def:fibonacci}, we have 
	\begin{align}
		Q_{k + 1}(x) Q_2(x) \notag
		&= Q_{k + 1}(x) - xQ_{k + 1}(x) \notag \\
		&= Q_{k + 1}(x) - x(Q_k(x) - xQ_{k - 1}(x)) \notag \\
		&= Q_{k + 1}(x) - xQ_k(x) + x^2 Q_{k - 1}(x) \notag \\
		&= Q_{k + 2}(x) + x^2 Q_{k - 1}(x) \label{eq:Qrel:l=1} \\
		&= Q_{k + 2}(x) + x^2 Q_{k - 1}(x) Q_0(x), \notag
	\end{align}
	hence \Cref{eq:Qrel} holds for $l = 1$. We also have
	\begin{align*}
		Q_{k + 1}(x) Q_3(x)
		&= Q_{k + 1}(x) (Q_2(x) - x Q_1(x)) \\
		&= Q_{k + 1}(x) Q_2(x) - x Q_{k + 1}(x) \\
		&= Q_{k + 2}(x) + x^2 Q_{k - 1}(x) - x Q_{k + 1}(x) \\
		&= Q_{k + 3}(x) + x^2 Q_{k - 1}(x) \\
		&= Q_{k + 3}(x) + x^2 Q_{k - 1}(x) Q_1(x)
	\end{align*}
	by \Cref{eq:Qrel:l=1}, hence \Cref{eq:Qrel} holds for $l = 2$.
	
	By \Cref{def:fibonacci},
	we have $\bs{Q}_{j + 1}(x) = R \bs{Q}_j(x)$ for all $j \in \N$, where
	\begin{equation*}
		\bs{Q}_i(x) = \mqty(Q_i(x) \\ Q_{i + 1}(x)),
		\quad R = \mqty(0 & 1 \\ -x & 1).
	\end{equation*}	
	If \Cref{eq:Qrel} holds for $l = i\in\N_+$ and $l=i + 1$, or equivalently 
	\[ Q_{k + 1}(x) \bs{Q}_{i + 1}(x) - x^2 Q_{k - 1}(x) \bs{Q}_{i - 1}(x) = \bs{Q}_{k + i + 1}(x) \]
	holds, then
	\begin{align*}
		& Q_{k + 1}(x) \bs{Q}_{i + 2}(x) - x^2 Q_{k - 1}(x) \bs{Q}_i(x) \\
		&= Q_{k + 1}(x) R \bs{Q}_{i + 1}(x) - x^2 Q_{k - 1}(x) R \bs{Q}_{i - 1}(x) \\
		&= R(Q_{k + 1}(x) \bs{Q}_{i + 1}(x) - x^2 Q_{k - 1}(x) \bs{Q}_{i - 1}(x)) \\
		&= R \bs{Q}_{k + i + 1}(x) \\
		&= \bs{Q}_{k + i + 2}(x),
	\end{align*}
	hence \Cref{eq:Qrel} holds for $l = i + 2$.
	The claim follows by induction.
\end{proof}
\begin{lemma}\label{thm:Qsum}
	For all $k \in \N$,
	\begin{equation*}
		\sum_{0 \le i < k} Q_i(x) Q_{k - i - 1}(x) = - Q_{k + 1}'(x).
	\end{equation*}
\end{lemma}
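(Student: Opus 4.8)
The plan is to prove the identity by induction on $k$, writing $S_k(x) := \sum_{0 \le i < k} Q_i(x)Q_{k-i-1}(x)$ for the left-hand side and showing that $S_k$ obeys the same second-order recurrence as $-Q_{k+1}'(x)$. First I would dispose of the two base cases: for $k=0$ the sum is empty, so $S_0 = 0 = -Q_1'(x)$, and for $k=1$, $S_1 = Q_0(x)^2 = 1 = -Q_2'(x)$ since $Q_2(x) = 1-x$ by \cref{def:fibonacci}.

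The core step is to derive a recurrence for $S_k$ itself, namely
\[
S_k = Q_{k-1} + S_{k-1} - x\,S_{k-2} \qquad (k \ge 2).
\]
To obtain it I would peel off the two boundary terms $i = k-1$ and $i = k-2$ of $S_k$, which contribute $Q_{k-1}Q_0 = Q_{k-1}$ and $Q_{k-2}Q_1 = Q_{k-2}$, and on the remaining terms ($i \le k-3$, where the index $k-1-i \ge 2$) apply the defining relation $Q_{k-1-i} = Q_{k-2-i} - xQ_{k-3-i}$ from \cref{def:fibonacci}. Re-indexing the two convolutions that appear gives $\sum_{i=0}^{k-3} Q_i Q_{k-2-i} = S_{k-1} - Q_{k-2}$ and $\sum_{i=0}^{k-3} Q_i Q_{k-3-i} = S_{k-2}$; the two copies of $Q_{k-2}$ then cancel, leaving the displayed recurrence.

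Finally I would close the induction. Differentiating $Q_{k+1} = Q_k - xQ_{k-1}$ yields $-Q_{k+1}' = Q_{k-1} - Q_k' + xQ_{k-1}'$, and substituting the induction hypotheses $-Q_k' = S_{k-1}$ and $-Q_{k-1}' = S_{k-2}$ turns the right-hand side into precisely $Q_{k-1} + S_{k-1} - xS_{k-2} = S_k$. The main obstacle is purely bookkeeping: correctly isolating the boundary terms and checking that the re-indexing identities, and hence the displayed recurrence, remain valid at the small end $k=2$, where one convolution range is empty (there one verifies $S_{k-1} - Q_{k-2} = 0$ and $S_{k-2} = S_0 = 0$ directly). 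As a much shorter alternative, one can introduce the bivariate series $F(x,t) = \sum_{i \ge 0} Q_i(x)t^i$, observe from \cref{def:fibonacci} that $(1 - t + xt^2)F = 1$, differentiate in $x$ to get $\partial_x F = -t^2 F^2$, and compare the coefficient of $t^{k+1}$ on both sides, which reads exactly $Q_{k+1}' = -S_k$.
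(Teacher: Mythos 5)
Your proof is correct, and your primary argument takes a genuinely different route from the paper. The paper works entirely with the bivariate generating function: it shows $(1-t+xt^2)\sum_k Q_k(x)t^k=1$, differentiates the resulting closed form $1/(1-t+xt^2)$ with respect to $x$ to obtain $-t^2\bigl(\sum_k Q_k(x)t^k\bigr)^2$, and reads off the coefficient of $t^{k+1}$ — exactly the ``much shorter alternative'' you sketch at the end. Your main proof instead stays at the level of the sequence itself: you derive the recurrence $S_k=Q_{k-1}+S_{k-1}-xS_{k-2}$ for the convolution $S_k=\sum_{0\le i<k}Q_iQ_{k-i-1}$ by peeling off the boundary terms and applying the defining relation to the inner factors, and you match it against the recurrence $-Q_{k+1}'=Q_{k-1}-Q_k'+xQ_{k-1}'$ obtained by differentiating $Q_{k+1}=Q_k-xQ_{k-1}$; the two base cases $k=0,1$ and the degenerate range at $k=2$ are all handled correctly, and the two-step induction is properly supported by the two base cases. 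What each approach buys: the paper's generating-function computation is shorter and makes the convolution structure transparent in one line, at the cost of working in $(\Z[x])[[t]]$ and invoking termwise differentiation of a formal power series; your induction is more elementary and self-contained, requiring only the defining recurrence of $\{Q_i\}$, at the cost of some index bookkeeping. Both are complete proofs.
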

\begin{proof}
	Define $\mathfrak Q(x, t)\in(\mathbb Z[x])[[t]]$ as
	\[ \mathfrak Q(x, t) = \sum_{k = 0}^\infty Q_k(x) t^k. \]
	Since
	\begin{align*}
		\mathfrak Q(x, t) &= Q_0(x) + Q_1(x) t + \sum_{k = 2}^\infty Q_k(x) t^k, \\
		t\mathfrak Q(x, t) &= Q_0(x)t + \sum_{k = 2}^\infty Q_{k - 1}(x) t^k, \\
		xt^2\mathfrak Q(x, t) &= x\sum_{k = 2}^\infty Q_{k - 2}(x) t^k,
	\end{align*}
	we have
	\begin{align*}
		(1 - t + xt^2) \mathfrak Q(x, t)
		&= Q_0(x) + Q_1(x) t - Q_0(x) t \\
		& \quad + \sum_{k = 2}^\infty (Q_k(x) - Q_{k - 1}(x) + x Q_{k - 2}(x)) t^k \\
		&= 1
	\end{align*}
	by \Cref{def:fibonacci}, hence
	\begin{equation}
		\mathfrak Q(x, t) = \frac1{1 - t + xt^2}.
	\end{equation}
	Therefore 
	\begin{align*}
		\sum_{k = 0}^\infty Q_k'(x) t^k
		&= \pdv{x} \mathfrak Q(x, t)
		= -\frac{t^2}{(1 - t + xt^2)^2} \\
		&= -t^2 (\mathfrak Q(x, t))^2
		= -\sum_{k = 0}^\infty \Biggl(\sum_{0 \le i < k} Q_i(x) Q_{k - i - 1}(x)\Biggr) t^{k + 1},
	\end{align*}
	which implies the claim.
\end{proof}
\begin{proposition}\label{thm:OGFofG0}
	For all $k \in \N$,
	\begin{equation}\label{eq:OGFofG0}
		G_{k, 0}(x) = G_{0, k}(x) = \frac{Q_k(x)}{Q_{k + 1}(x)}.
	\end{equation}
\end{proposition}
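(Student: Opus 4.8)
The plan is to prove the identity for $G_{k, 0}(x)$ by induction on $k$, and then deduce $G_{0, k}(x) = G_{k, 0}(x)$ from a complementation symmetry. First I would record that $\mathcal G_{k, 0}$ consists of those $\bs\gamma$ whose height function $\delta_{\bs\gamma}$ stays within $[-k, 0]$ (the constraint $\overline\delta_{\bs\gamma} \le 0$ forces $\overline\delta_{\bs\gamma} = 0$ since $\delta_{\bs\gamma}(0) = 0$). For the base case $k = 0$, the condition $\underline\delta_{\bs\gamma} = \overline\delta_{\bs\gamma} = 0$ together with the fact that $\delta_{\bs\gamma}$ changes by $\pm 1$ at each step forces $\bs\gamma = e$, so $\mathcal G_{0, 0} = \{e\}$ and $G_{0, 0}(x) = 1 = Q_0(x)/Q_1(x)$.

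For the inductive step, suppose $G_{k, 0}(x) = Q_k(x)/Q_{k + 1}(x)$. By \cref{thm:Gcong} we have $\mathcal G_{k + 1, 0} \cong \Seq(\lambda^-(\mathcal G_{k, 0}))$, and the inner class $\lambda^-(\mathcal G_{k, 0})$ has no element of size $0$ (its smallest member is $\lambda^-(e) = (1, 0)$, of size $1$), so the sequence class exists by \cref{thm:sequence class}. Since the OGF of $\lambda^-(\mathcal G_{k, 0})$ is $x G_{k, 0}(x)$ by \cref{thm:pmOGF}, combining \cref{thm:OGF of sequence class} with the fact that isomorphic classes share the same OGF (\cref{isomorphic <=> same counting sequences}) gives
\[
G_{k + 1, 0}(x) = \frac{1}{1 - x G_{k, 0}(x)} = \frac{Q_{k + 1}(x)}{Q_{k + 1}(x) - x Q_k(x)}.
\]
By the recurrence $Q_{k + 2}(x) = Q_{k + 1}(x) - x Q_k(x)$ from \cref{def:fibonacci}, the denominator equals $Q_{k + 2}(x)$, which completes the induction and establishes $G_{k, 0}(x) = Q_k(x)/Q_{k + 1}(x)$ for all $k$.

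It remains to prove $G_{0, k}(x) = G_{k, 0}(x)$. For this I would use the complementation map $\bs\gamma \mapsto \bar{\bs\gamma}$ that swaps every $0$ and $1$: it is a size-preserving bijection of $\mathcal G$ that sends $\delta_{\bs\gamma}$ to $-\delta_{\bs\gamma}$, hence interchanges $\overline\delta_{\bs\gamma}$ with $-\underline\delta_{\bs\gamma}$ and carries $\mathcal G_{k, 0}$ isomorphically onto $\mathcal G_{0, k}$; \cref{isomorphic <=> same counting sequences} then yields the equality of OGFs. Alternatively, one could run the entirely parallel induction built on $\mathcal G_{0, k + 1} \cong \Seq(\lambda^+(\mathcal G_{0, k}))$ from \cref{thm:Gcong} together with \cref{thm:pmOGF}, which produces the identical recurrence and hence the same closed form.

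I anticipate no genuine obstacle: the combinatorial decompositions have already been supplied by \cref{thm:Gcong}, and what remains is the algebraic bookkeeping via the $Q_i$ recurrence. The one point that demands care in each inductive step is verifying the hypothesis $a_0 = 0$ needed for the sequence class to exist (\cref{thm:sequence class}); this is where an oversight could most easily slip in, so I would make the check explicit each time.
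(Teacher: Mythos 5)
Your proof is correct and follows essentially the same route as the paper: the same base case $G_{0,0}(x)=1=Q_0(x)/Q_1(x)$ and the same inductive step via $\mathcal G_{k+1,0}\cong\Seq(\lambda^-(\mathcal G_{k,0}))$, \cref{thm:pmOGF}, \cref{thm:OGF of sequence class}, and the recurrence $Q_{k+2}(x)=Q_{k+1}(x)-xQ_k(x)$. The only cosmetic difference is that you obtain $G_{0,k}=G_{k,0}$ from the $0\leftrightarrow 1$ complementation bijection (a clean shortcut), whereas the paper runs the symmetric induction on $\mathcal G_{0,k+1}\cong\Seq(\lambda^+(\mathcal G_{0,k}))$ — which you also note as an alternative — and your explicit check of the $a_0=0$ hypothesis for $\Seq$ is a point the paper leaves implicit.
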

\begin{proof}
	Since $G_{0, 0}(x) = 1 = Q_0(x) / Q_1(x)$, \Cref{eq:OGFofG0} holds for $k=0$.
	
	Suppose \Cref{eq:OGFofG0} holds for some $k \in \N$.
	Since $\mathcal G_{k + 1, 0} \cong \Seq(\lambda^-(\mathcal G_{k, 0}))$ by \Cref{thm:Gcong},
	we have
	\[ G_{k + 1, 0}(x) = \frac{1}{1 - x G_{k, 0}(x)} 
	= \frac{ Q_{k + 1}(x) }{ Q_{k + 1}(x) - x Q_k(x) }
	= \frac{ Q_{k + 1}(x) }{ Q_{k + 2}(x) }
	\]
	by \Cref{def:fibonacci,thm:OGF of sequence class,thm:pmOGF}.
	Similarly,
	since $\mathcal G_{0, k + 1} \cong \Seq(\lambda^+(\mathcal G_{0, k}))$ by \Cref{thm:Gcong}, we obtain
	\begin{equation*}
		G_{0, k + 1}(x)
		= \frac{ Q_{k + 1}(x) }{ Q_{k + 2}(x) }.
	\end{equation*}
	Therefore, \Cref{eq:OGFofG0} holds for $k+1$ in place of $k$, and the proof is complete.
\end{proof}
\begin{proposition}\label{thm:OGFofG}
	For all $k, l \in \N$,
	\begin{equation}\label{eq:OGFofG}
		G_{k, l}(x) = \frac{Q_k(x)Q_l(x)}{Q_{k + l + 1}(x)}.
	\end{equation}
\end{proposition}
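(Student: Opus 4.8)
The plan is to prove \cref{eq:OGFofG} by reducing it to the rational form already established in \cref{thm:OGFofG0} together with the structural isomorphism in \cref{thm:Gcong}. First I would dispose of the boundary cases: when $k=0$ or $l=0$, the claimed formula collapses to $Q_k(x)Q_0(x)/Q_{k+1}(x)=Q_k(x)/Q_{k+1}(x)$ (and symmetrically for $l=0$), which is exactly the content of \cref{thm:OGFofG0} since $Q_0(x)=1$ by \cref{def:fibonacci}. It then remains to treat $k,l\in\N_+$.

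For $k,l\ge 1$, write $k=k'+1$ and $l=l'+1$ with $k',l'\in\N$ and invoke the third isomorphism of \cref{thm:Gcong}, namely $\mathcal G_{k,l}\cong\Seq(\lambda^-(\mathcal G_{k',0})+\lambda^+(\mathcal G_{0,l'}))$. Passing to OGFs via \cref{isomorphic <=> same counting sequences}, it suffices to compute the OGF of the right-hand side. By \cref{thm:pmOGF}, $\lambda^-(\mathcal G_{k',0})$ and $\lambda^+(\mathcal G_{0,l'})$ have OGFs $xG_{k',0}(x)$ and $xG_{0,l'}(x)$, so by \cref{thm:OGF of cartesian product} the combinatorial sum has OGF $x(G_{k',0}(x)+G_{0,l'}(x))$, and \cref{thm:OGF of sequence class} yields the functional equation
\begin{equation*}
G_{k,l}(x)=\frac{1}{1-x\bigl(G_{k-1,0}(x)+G_{0,l-1}(x)\bigr)}.
\end{equation*}
Substituting $G_{k-1,0}(x)=Q_{k-1}(x)/Q_k(x)$ and $G_{0,l-1}(x)=Q_{l-1}(x)/Q_l(x)$ from \cref{thm:OGFofG0} and clearing denominators, this becomes
\begin{equation*}
G_{k,l}(x)=\frac{Q_k(x)Q_l(x)}{Q_k(x)Q_l(x)-xQ_{k-1}(x)Q_l(x)-xQ_k(x)Q_{l-1}(x)}.
\end{equation*}

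Comparing with the target \cref{eq:OGFofG}, it therefore suffices to prove the polynomial identity $Q_kQ_l-xQ_{k-1}Q_l-xQ_kQ_{l-1}=Q_{k+l+1}$ (arguments $x$ suppressed). I would establish this purely from \cref{def:fibonacci,thm:Qrel}: first use $Q_{k+1}=Q_k-xQ_{k-1}$ to collapse the first two terms into $Q_{k+1}Q_l-xQ_kQ_{l-1}$; then substitute $Q_l=Q_{l+1}+xQ_{l-1}$ and use $Q_{k+1}-Q_k=-xQ_{k-1}$ to rewrite the expression as $Q_{k+1}Q_{l+1}-x^2Q_{k-1}Q_{l-1}$. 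This is exactly the left-hand side of \cref{eq:Qrel}, which equals $Q_{k+l+1}$ for $k,l\in\N_+$ by \cref{thm:Qrel}. Combining the functional equation with this identity gives $G_{k,l}(x)=Q_k(x)Q_l(x)/Q_{k+l+1}(x)$, completing the proof.

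The OGF bookkeeping and the clearing of denominators are routine; the only place requiring care is the final algebraic reduction, where the manipulations must land on precisely the left-hand side of \cref{eq:Qrel} so that \cref{thm:Qrel} applies verbatim, rather than forcing a separate inductive proof of a variant identity.
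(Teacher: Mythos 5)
Your proposal is correct and follows essentially the same route as the paper: boundary cases via \cref{thm:OGFofG0}, the functional equation $G_{k,l}(x)=1/(1-xG_{k-1,0}(x)-xG_{0,l-1}(x))$ from \cref{thm:Gcong,thm:pmOGF,thm:OGF of cartesian product,thm:OGF of sequence class}, and reduction of the denominator to $Q_{k+1}(x)Q_{l+1}(x)-x^2Q_{k-1}(x)Q_{l-1}(x)$ so that \cref{thm:Qrel} applies. The only (immaterial) difference is that the paper obtains this form in one step by factoring the denominator as $(Q_k(x)-xQ_{k-1}(x))(Q_l(x)-xQ_{l-1}(x))-x^2Q_{k-1}(x)Q_{l-1}(x)$, whereas you reach it by two successive substitutions.
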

\begin{proof}
	If $k = 0$ or $l = 0$, then \Cref{eq:OGFofG} holds by \Cref{thm:OGFofG0}.
	
	Suppose $k \ge 1$ and $l \ge 1$.
	Since $\mathcal G_{k, l} \cong \Seq(\lambda^-(\mathcal G_{k - 1, 0}) + \lambda^+(\mathcal G_{0, l - 1}))$ by \Cref{thm:Gcong},
	we have
	\[ G_{k, l}(x) = \frac{1}{1 - x G_{k - 1, 0}(x) - x G_{0, l - 1}(x)} \]
	by \Cref{thm:OGF of cartesian product,thm:OGF of sequence class,thm:pmOGF}, where
	\[
		G_{k - 1, 0}(x) = \frac{Q_{k - 1}(x)}{Q_k(x) },\qquad
		G_{0, l - 1}(x) = \frac{Q_{l - 1}(x)}{Q_l(x)}
	\]
	by \Cref{thm:OGFofG0}.
	Hence
	\begin{align*}
		G_{k, l}(x)
		&= \frac{Q_k(x) Q_l(x)}{Q_k(x) Q_l(x) - x Q_{k - 1}(x) Q_l(x) - x Q_k(x) Q_{l - 1}(x)} \\
		&= \frac{Q_k(x) Q_l(x)}{(Q_k(x) - xQ_{k - 1}(x))(Q_l(x) - xQ_{l - 1}(x)) - x^2Q_{k - 1}(x)Q_{l - 1}(x)} \\
		&= \frac{Q_k(x) Q_l(x)}{Q_{k + 1}(x)Q_{l + 1}(x) - x^2Q_{k - 1}(x)Q_{l - 1}(x)} \\
		&= \frac{Q_k(x) Q_l(x)}{Q_{k + l + 1}(x)}
	\end{align*}
	by \Cref{def:fibonacci,thm:Qrel}.
\end{proof}
\begin{proposition}\label{thm:OGFwtG}
	For $k \in \N$, the OGF of $\wt{\mathcal G}_k = \{ \bs\gamma \in \mathcal G : \overline\delta_{\bs\gamma} - \underline\delta_{\bs\gamma} \le k \}$ is
	\[ \wt G_k(x) = \frac{Q_{k + 1}'(x)}{Q_k(x)} - \frac{Q_{k + 2}'(x)}{Q_{k + 1}(x)}. \]
\end{proposition}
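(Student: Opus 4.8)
The plan is to stratify $\wt{\mathcal G}_k$ according to the \emph{exact} minimum height of each walk. Because $\delta_{\bs\gamma}(0) = 0$ we always have $\underline\delta_{\bs\gamma} \le 0 \le \overline\delta_{\bs\gamma}$, so I would write $\underline\delta_{\bs\gamma} = -j$ with $j \in \N$; the defining condition $\overline\delta_{\bs\gamma} - \underline\delta_{\bs\gamma} \le k$ then reads $\overline\delta_{\bs\gamma} \le k - j$ and forces $j \le k$. This exhibits $\wt{\mathcal G}_k$ as the disjoint union, over $j \in \{0, \ldots, k\}$, of the subclasses
\[
	\mathcal H_j = \bigl\{ \bs\gamma \in \mathcal G : \underline\delta_{\bs\gamma} = -j, \ \overline\delta_{\bs\gamma} \le k - j \bigr\}.
\]

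The key step is to recognize each $\mathcal H_j$ as a difference of the band classes $\mathcal G_{\,\cdot\,,\,\cdot\,}$ whose OGFs were computed in \cref{thm:OGFofG}. For $j \ge 1$, requiring $\underline\delta_{\bs\gamma} = -j$ exactly is the same as requiring $\underline\delta_{\bs\gamma} \ge -j$ but \emph{not} $\underline\delta_{\bs\gamma} \ge -(j-1)$, with the ceiling $\overline\delta_{\bs\gamma} \le k - j$ unchanged; hence $\mathcal H_j = \mathcal G_{j, k - j} \setminus \mathcal G_{j - 1, k - j}$, while $\mathcal H_0 = \mathcal G_{0, k}$ since $\underline\delta_{\bs\gamma} \ge 0$ already forces $\underline\delta_{\bs\gamma} = 0$. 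As these are nested finite classes, their counting sequences subtract, so passing to OGFs gives
\[
	\wt G_k(x) = G_{0, k}(x) + \sum_{j = 1}^{k} \bigl( G_{j, k - j}(x) - G_{j - 1, k - j}(x) \bigr).
\]

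Substituting $G_{j, l}(x) = Q_j(x) Q_l(x) / Q_{j + l + 1}(x)$ from \cref{thm:OGFofG}, I observe that every term $G_{j, k - j}$ (including $j = 0$) carries the denominator $Q_{k + 1}(x)$, whereas every subtracted term $G_{j - 1, k - j}$ carries $Q_k(x)$. Re-indexing the subtracted sum by $i = j - 1$ and absorbing the $\mathcal H_0$ term, the expression collapses to
\[
	\wt G_k(x) = \frac{1}{Q_{k + 1}(x)} \sum_{j = 0}^{k} Q_j(x) Q_{k - j}(x) - \frac{1}{Q_k(x)} \sum_{i = 0}^{k - 1} Q_i(x) Q_{k - 1 - i}(x).
\]
Both inner sums are instances of the convolution identity in \cref{thm:Qsum}, which evaluate to $-Q_{k + 2}'(x)$ and $-Q_{k + 1}'(x)$ respectively. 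Inserting these yields $\wt G_k(x) = Q_{k + 1}'(x)/Q_k(x) - Q_{k + 2}'(x)/Q_{k + 1}(x)$, which is the claim.

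The main obstacle is conceptual rather than computational: the amplitude $\overline\delta_{\bs\gamma} - \underline\delta_{\bs\gamma}$ is invariant under vertical translation, so it does not pin down the position of the confining strip the way $\mathcal G_{k, l}$ does. This is precisely why one must sum over the exact minimum and why each stratum appears as a \emph{difference} of band classes. Once that difference structure is in place, the two distinct denominators $Q_{k + 1}$ and $Q_k$ align exactly with the two instances of \cref{thm:Qsum}, and no further cancellation argument is needed.
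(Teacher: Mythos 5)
Your proposal is correct and follows essentially the same route as the paper: stratifying $\wt{\mathcal G}_k$ by the exact value of $\underline\delta_{\bs\gamma}$, writing each stratum as the set difference $\mathcal G_{j,k-j}\setminus\mathcal G_{j-1,k-j}$, and then collapsing the two resulting sums via \cref{thm:OGFofG} and the convolution identity of \cref{thm:Qsum}. The paper phrases the difference step as the isomorphism $\mathcal G_{i,j}\cong\mathcal G_{i-1,j}+\wt{\mathcal G}_{i,j}$, but the computation is identical.
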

\begin{proof}
	For $i, j \in \N$,
	let $\wt{\mathcal G}_{i, j} = \{ \bs\gamma \in \mathcal G : -i = \underline\delta_{\bs\gamma} ,\, \overline\delta_{\bs\gamma} \le j \}$
	and $\wt G_{i, j}$ be the OGF of $\wt{\mathcal G}_{i, j}$.
	Since $\wt{\mathcal G}_{0, j} = \mathcal G_{0, j}$, $ \mathcal G_{i, j} \cong \mathcal G_{i - 1, j} + \wt{\mathcal G}_{i, j}$ if $i\ge 1$, and 
	$
		\wt{\mathcal G}_k = \bigsqcup_{i = 0}^k \wt{\mathcal G}_{i, k - i}
	$
	by definition, we have
	\begin{align*}
		\wt G_k(x)
		&= \sum_{i = 0}^k \wt G_{i, k - i}(x) \\
		&= G_{0, k}(x) + \sum_{1 \le i < k + 1} (G_{i, k - i}(x) - G_{i - 1, k - i}(x)) \\
		&= \sum_{0 \le i < k + 1} G_{i, k - i}(x) - \sum_{0 \le i < k} G_{i, k - i - 1}(x)\\
		&= \sum_{0 \le i < k + 1} \frac{Q_i(x)Q_{k - i}(x)}{Q_{k + 1}(x)} - \sum_{0 \le i < k} \frac{Q_i(x)Q_{k - i - 1}(x)}{Q_k(x)}\\
		&= \frac{Q_{k + 1}'(x)}{Q_k(x)} - \frac{Q_{k + 2}'(x)}{Q_{k + 1}(x)}
	\end{align*}
	by \Cref{thm:OGF of cartesian product,thm:Qsum,thm:OGFofG}.
\end{proof}
\begin{theorem}\label{thm:original_conjecture}
	For $k = 0,\ldots,n$, we have
	\begin{equation*}
		\#\biggl\{ \bs\gamma \in \Gamma_{n, n} : \widehat\rho_2(\bs\gamma) \ge 1 - \frac{k}{n} \biggr\}
		= [x^n]\biggl(\frac{Q_{k + 1}'(x)}{Q_{k}(x)} - \frac{Q_{k + 2}'(x)}{Q_{k + 1}(x)}\biggr).
	\end{equation*}
\end{theorem}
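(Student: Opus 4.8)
The plan is to reduce the statement to a single coefficient extraction from the generating function $\wt G_k(x)$ computed in \cref{thm:OGFwtG}. The essential point is that \cref{thm:rho&delta} has already converted the statistic $\widehat\rho_2(\bs\gamma)$ into the purely combinatorial quantity $\overline\delta_{\bs\gamma} - \underline\delta_{\bs\gamma}$, so the remaining task is bookkeeping: translate the defining inequality, recognize the relevant set as the size-$n$ slice of $\wt{\mathcal G}_k$, and read off its cardinality.

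First I would rewrite the defining inequality. By \cref{thm:rho&delta}, for $\bs\gamma \in \Gamma_{n,n}$ we have $\widehat\rho_2(\bs\gamma) = 1 - (\overline\delta_{\bs\gamma} - \underline\delta_{\bs\gamma})/n$, so the condition $\widehat\rho_2(\bs\gamma) \ge 1 - k/n$ is equivalent to $\overline\delta_{\bs\gamma} - \underline\delta_{\bs\gamma} \le k$. Hence
\[
\biggl\{ \bs\gamma \in \Gamma_{n,n} : \widehat\rho_2(\bs\gamma) \ge 1 - \frac{k}{n} \biggr\}
= \bigl\{ \bs\gamma \in \Gamma_{n,n} : \overline\delta_{\bs\gamma} - \underline\delta_{\bs\gamma} \le k \bigr\}.
\]

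Next I would identify this set with the size-$n$ elements of $\wt{\mathcal G}_k$. Since $\mathcal G = \bigcup_{i} \Gamma_{i,i}$ with $|\bs\gamma| = i$ for $\bs\gamma \in \Gamma_{i,i}$, the class $\Gamma_{n,n}$ is exactly $\{\bs\gamma \in \mathcal G : |\bs\gamma| = n\}$; and because $\wt{\mathcal G}_k = \{\bs\gamma \in \mathcal G : \overline\delta_{\bs\gamma} - \underline\delta_{\bs\gamma} \le k\}$, the set displayed above coincides with $\{\bs\gamma \in \wt{\mathcal G}_k : |\bs\gamma| = n\}$. By the definition of the counting sequence and the OGF, its cardinality is precisely $[x^n]\wt G_k(x)$. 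Substituting the closed form $\wt G_k(x) = Q_{k+1}'(x)/Q_k(x) - Q_{k+2}'(x)/Q_{k+1}(x)$ from \cref{thm:OGFwtG} then yields the claimed identity. I anticipate no real obstacle at this stage: all the substantive work — the reduction of $\widehat\rho_2$ to $\overline\delta_{\bs\gamma} - \underline\delta_{\bs\gamma}$ and the symbolic evaluation of the generating function through the sequence-class decomposition — is already carried out in the preceding lemmas, and this theorem is merely the assembly step. The only point worth a word of care is that the restriction $k \in \{0,\ldots,n\}$ is irrelevant to the coefficient extraction (the generating-function identity holds for every $k \in \N$), but stating the conclusion in this range suffices for the p-value computation in \cref{rem:p2nn_fast}.
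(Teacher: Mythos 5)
Your proposal is correct and follows essentially the same route as the paper's own proof: apply \cref{thm:rho&delta} to translate the condition $\widehat\rho_2(\bs\gamma)\ge 1-k/n$ into $\overline\delta_{\bs\gamma}-\underline\delta_{\bs\gamma}\le k$, identify the resulting set with the size-$n$ slice of $\wt{\mathcal G}_k$, and extract $[x^n]\wt G_k(x)$ via \cref{thm:OGFwtG}. No gaps; this is exactly the assembly step the paper performs.
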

\begin{proof}
	We have
	\begin{align*}
		\#\biggl\{ \bs\gamma \in \Gamma_{n, n} : \widehat\rho_2(\bs\gamma) \ge 1 - \frac{k}{n} \biggr\}
		&= \#\{ \bs\gamma \in \Gamma_{n, n} : \overline\delta_{\bs\gamma} - \underline\delta_{\bs\gamma} \le k \} \\
		&= \#\{ \bs\gamma \in \wt{\mathcal G}_k : |\bs\gamma| = n \} \\
		&= [x^n] \wt G_k(x) \\
		&= [x^n]\biggl(\frac{Q_{k + 1}'(x)}{Q_{k}(x)} - \frac{Q_{k + 2}'(x)}{Q_{k + 1}(x)}\biggr)
	\end{align*}
	by \Cref{thm:rho&delta,thm:OGFwtG}.
\end{proof}

Note that \Cref{thm:original_conjecture} is exactly \Cref{thm:rho_hat.fibonacci}.

\section*{Acknowledgement}
This study was partially supported by JSPS KAKENHI Grant Numbers JP21K15762, JP15K04814, and JP20K03509.

%
\vskip24pt
\small
%
\centerline{\bf References}
\vskip12pt
\begin{enumerate}
\renewcommand{\labelenumi}{[\arabic{enumi}]}
\renewcommand{\makelabel}{\rm}
\setcounter{enumi}{0}
\setlength{\itemsep}{-3pt}
\setlength{\parsep}{0cm}

\bibitem{anderson62}
Anderson, T. W.,
On the distribution of the two-sample Cram\'{e}r-von Mises criterion,
Ann. Math. Stat. {\bf 33} (1962), 1148--1159.
\bibitem{berger14}
Berger, V. W. and Y. Zhou,
Kolmogorov-Smirnov Test: Overview,
Wiley StatsRef: Statistics Reference Online
(2014).
\bibitem{AnaCom}
Flajolet, P. and R. Sedgewick,
Analytic Combinatorics,
Cambridge University Press (2009).
\bibitem{Hand}
Hand, D. J.,
Assessing the performance of classification methods,
Int. Stat. Rev.
{\bf 80} (2012), 400--414.
\bibitem{hsu47}
Hsu, P. L. and H. Robbins,
Complete convergence and the law of large numbers,
Proc. Nat. Acad. Sci. U.S.A.
{\bf 33} (1947), 25--31.
\bibitem{johno21}
Johno, H. and K. Nakamoto,
Decision tree-based estimation of the overlap of two probability distributions, J. Math. Sci. Univ. Tokyo {\bf 30} (2023), 21--54.
\bibitem{mann47}
Mann, H. B. and D. R. Whitney,
On a test of whether one of two random variables is stochastically larger than the other,
Ann. Math. Stat. {\bf 18} (1947), 50--60.
\bibitem{Gunar}
Schr\"{o}er, G. and D. Trenkler,
Exact and randomization distributions of \\
Kolmogorov-Smirnov tests two or three samples,
Comput. Stat. Data Anal. {\bf 20} (1995), 185--202.
\bibitem{serfling80}
Serfling, R. J.,
Approximation Theorems of Mathematical Statistics,
John Wiley \& Sons, Inc. (1980).
\bibitem{A115139}
Sloane, N. J. A. and The OEIS Foundation Inc.,
entry A115139 in the On-Line Encyclopedia of Integer Sequences,
\url{https://oeis.org/A115139}.
\bibitem{snedecor89}
Snedecor, G. W. and W. G. Cochran,
Statistical Methods,
Iowa State University Press (1989).
\bibitem{welch47}
Welch, B. L.,
The generalization of `Student's' problem when several different population variances are involved,
Biometrika {\bf 34} (1947) 28--35.

\end{enumerate}

\parindent5cm
Atsushi Komaba
and
Hisashi Johno

Department of Radiology

Faculty of Medicine

University of Yamanashi

Shimokato 1110, Chuo, Yamanashi 409-3898, Japan

Email: fiveseven.lambda@gmail.com

\phantom{Email: }johnoh@yamanashi.ac.jp
\vskip12pt
Kazunori Nakamoto

Center for Medical Education and Sciences

Faculty of Medicine

University of Yamanashi

Shimokato 1110, Chuo, Yamanashi 409-3898, Japan

Email: nakamoto@yamanashi.ac.jp
\end{document}